\documentclass[preprint,12pt]{elsarticle}    

\setlength{\topmargin}{-.5in}
\setlength{\textheight}{9in}
\setlength{\oddsidemargin}{.005in}
\setlength{\textwidth}{6.7in}

\usepackage{amsbsy}
\usepackage{graphicx,epsfig}
\usepackage{enumerate}
\usepackage{amssymb}
\usepackage{amsthm}
\usepackage{amsmath}
\usepackage{xcolor}
\usepackage{todonotes}
\usepackage{times}
\usepackage{mathrsfs}
\usepackage[utf8]{inputenc}
\usepackage{bm}
\usepackage{mathtools}
\usepackage{appendix}

\usepackage[linkcolor=blue, urlcolor=blue, citecolor=blue,
colorlinks, bookmarks]{hyperref}

\newtheorem{theorem}{Theorem}[section]
\theoremstyle{definition}
\newtheorem{lemma}[theorem]{Lemma}

\newtheorem{definition}[theorem]{Definition}

\newtheorem{example}[theorem]{Example}
\newtheorem{remark}{Remark}

\newtheorem{note}{Note}

\DeclareMathOperator*{\argmin}{argmin}

\newsavebox\myboxA
\newsavebox\myboxB
\newlength\mylenA

\makeatletter
\newcommand{\thickhline}{%
	\noalign {\ifnum 0=`}\fi \hrule height 1pt
	\futurelet \reserved@a \@xhline
}
\numberwithin{equation}{section}

\journal{Fuzzy Sets and Systems}
\DeclareUnicodeCharacter{0301}{'}
\begin{document}

	\begin{frontmatter}
		
		\title{\textbf{Ekeland's Variational Principle for Interval-valued Functions}}
		\author[iitbhu_math]{Gourav Kumar}
	    \ead{gouravkr.rs.mat17@itbhu.ac.in}
		\author[iitbhu_math]{Debdas Ghosh\corref{cor1}}
		\ead{debdas.mat@iitbhu.ac.in}
		\address[iitbhu_math]{Department of Mathematical Sciences,  Indian Institute of Technology (BHU) Varanasi \\ Uttar Pradesh--221005, India}
		\cortext[cor1]{Corresponding author}

		\begin{abstract}
		In this paper, we attempt to propose Ekeland's variational principle for interval-valued functions (IVFs). To develop the variational principle, we study a concept of sequence of intervals. In the sequel, the idea of $gH$-semicontinuity for IVFs is explored. A necessary and sufficient condition for an IVF to be $gH$-continuous in terms of $gH$-lower and upper semicontinuity is given. Moreover, we prove a  characterization for $gH$-lower semicontinuity by the level sets of the IVF. With the help of this characterization result, we ensure the existence of a minimum for an extended $gH$-lower semicontinuous, level-bounded and proper IVF. To find an approximate minima of a $gH$-lower semicontinuous and $gH$-G\^{a}teaux differentiable IVF, the proposed Ekeland's variational principle is used.
		\end{abstract}
	
		\begin{keyword}
				Interval-valued functions, $gH$-semicontinuity, $gH$-G\^{a}teaux differentiability, Ekeland's variational principle\\
			
			 Mathematics Subject Classification: 26A24 $\cdot$ 90C30 $\cdot$ 65K05
		\end{keyword}
		
	\end{frontmatter}
	
	\section{Introduction}
	In real analysis, we deal with real-valued functions and their calculus. Similarly, interval analysis deals with interval-valued functions (IVFs), where uncertain variables are represented by intervals. The analysis of IVFs enables one to effectively deal with the errors/uncertainties that appear while modeling the practical problems. Development of the theories of IVFs is primarily important for the analysis of fuzzy-number-valued functions since alpha-cuts of a fuzzy number is a compact interval \cite{ghosh2019introduction}. In fact, for a given alpha, the alpha-cut of a fuzzy-number-valued function is an interval-valued function. Thus, once the tools to analyze IVFs are ready, by the decomposition principle \cite{ghosh2019introduction}, one can easily investigate the properties of fuzzy-valued functions. \\

	To identify characteristic of IVFs, calculus plays a significant role. Wu \cite{wu2007karush} proposed the concepts of limit, continuity, and $H$-differentiability for IVFs. The concept of $H$-differentiability uses $H$-difference to find the difference between elements of $I(\mathbb{R})$, and hence it is restrictive \cite{stefanini2009generalized}. To overcome the  shortcomings of $H$-differentiability, Stefanini and Bede \cite{stefanini2009generalized} introduced $gH$-differentiability for IVFs. Thereafter, by using $gH$-differentiability, Chalco-Cano et al. \cite{chalco2013calculus} developed the calculus for IVFs. In the same article \cite{chalco2013calculus}, the fundamental theorem of calculus for IVFs has been presented. With the help of the parametric representation of an IVF, the notions of $gH$-gradient and $gH$-partial derivative of an IVF has been discussed in \cite{ghosh2017newton}. Recently, Ghosh et al. \cite{ghosh2020generalized} introduced the concepts of $gH$-G\^{a}teaux and Fr\'{e}chet derivatives for IVFs with the help of linear IVFs. Further, researchers have also discussed concepts of differential equations with IVFs \cite{ahmad2019sufficiency,chen2004interval,van2015initial,wu2007karush}.  In order to study the interval fractional differential equations, Lupulescu \cite{lupulescu2015fractional} developed the theory of fractional calculus for IVFs. \\


	In developing mathematical theory for optimization with IVFs, apart from calculus of IVFs, an appropriate choice for ordering of intervals is necessary since the set of intervals is not linearly ordered \cite{ghosh2020variable} like the set of real numbers. Hence, the very definition of optimality gets differed than that of conventional one. However, one can use some partial ordering structures on the set of intervals. Some partial orderings of intervals are discussed by Ishibuchi and Tanka in their 1990 paper \cite{ishibuchi1990multiobjective}. By making use of these partial orderings and $H$-differentiability, Wu \cite{wu2007karush} proposed KKT optimality conditions for an IOP. In a set of two papers, Wu \cite{wu2008interval,wu2008wolfe} solved four types of IOPs and presented weak and strong duality theorems for IOPs by using $H$-differentiability. Chalco-Cano et al. \cite{chalco2013optimality} used a more general concept of differentiability ($gH$-differentiability) and provided KKT type optimality conditions for IOPs. Singh et al. \cite{singh2016kkt} investigated a class of interval-valued multiobjective programming problems and proposed the concept of Pareto optimal solutions for this class of optimization problems. Unlike the earlier approaches, in 2017, Osuna-G{\'o}mez et al. \cite{osuna2017new} provided efficiency conditions for an IOP without converting it into a real-valued optimization problem. In 2018, Zhang et al. \cite{zhang2018multi} and Gong et al. \cite{gong2016set} proposed genetic algorithms to solve IOPs.  Ghosh et al. \cite{Ghosh2019extended} reported generalized KKT conditions to obtain the solution of constrained IOPs.
	Many other authors have also proposed optimality conditions and solution concepts for IOP, for instances, see \cite{ahmad2019sufficiency,ghosh2017newton,ghosh2018saddle,wolfe2000interval} and the references therein.\\
	
	

\subsection{Motivation and Work Done}
So far, all the solution concepts in interval analysis to find minima of an IVF are for those IVFs that are $gH$-continuous and $gH$-differentiable. However, while modeling the real-world problems, we may get an objective function that is neither $gH$-differentiable nor $gH$-continuous\footnote{Analytical models of some interesting real-world problems with neither differentiable nor continuous objective functions can be found in Clarke's book \cite{clarke1990optimization}.}. We thus,   in this study, introduce the notions of $gH$-semicontinuity and give results which guarantees the existence of a minima and an approximate minima for an IVF which need not be $gH$-differentiable or $gH$-continuous. \\




For a nonsmooth optimization problem, it is not always easy to find an exact optima \cite{facchinei2007finite}. In such situations, one attempts to find approximate optima.  It is a well-known fact that Ekeland's variational principle \cite{ekeland1974variational} is helpful to give approximate solutions \cite{facchinei2007finite}. Also, it is widely known that in the conventional and vector optimization problems, the concept of weak sharp minima \cite{burke2002weak} plays an important role. It is closely related to sensitive analysis and convergence analysis of optimization problems \cite{burke1993weak,henrion2001subdifferential}. Ekeland's variational principle is a useful tool to show the existence of weak sharp minima for a constrained optimization problem with nonsmooth objective function \cite{facchinei2007finite}.
Moreover, Ekeland's variational principle \cite{ekeland1974variational} is one of the most powerful tools for nonlinear analysis. It has applications in different areas including optimization theory, fixed point theory, and global analysis, for instances, see \cite{borwein1999equivalence,ekeland1979nonconvex,fabian1996smooth,fabian1998nonsmooth,georgiev1988strong,kruger2003frechet,penot1986drop}. Due to all these wide applications of Ekeland's variational principle in different areas, especially in nonsmooth optimization and control theory, we attempt to study this principle for $gH$-lower semicontinuous IVFs in this article. Further, we also give Ekeland's variational principle for $gH$-G\^{a}teaux differentiable IVFs.

	
\subsection{Delineation}	
	The proposed study is presented in the following manner. In Section \ref{sec2}, basic terminologies and definitions on intervals and IVFs are provided. In Section \ref{sec3}, we define $gH$-semicontinuity for IVFs and give a characterization for $gH$-continuity of an IVF in terms of $gH$-lower and upper semicontinuity. Also, we give a characterization of $gH$-lower semicontinuity, and using this we prove that an extended $gH$-lower semicontinuous,  level-bounded and proper IVF attains its minimum. Further, a characterization of the set argument minimum of an IVF is given. After that, we present Ekeland's variational principle for IVFs and its application in Section \ref{sec5}. Lastly, the conclusion and future scopes are given in section \ref{sec6}.

\section{Preliminaries and Terminologies}\label{sec2}
In this article, the following notations are used.

\begin{itemize}[{\tiny\raisebox{1ex}{\textbullet}}]
    \item $\mathbb{R}$ denotes the set of real numbers
    \item $\mathbb{R}^+$ denotes the set of nonnegative real numbers
    \item $I(\mathbb{R})$ represents the set of all closed and bounded intervals
    \item  Bold capital letters are used to represent the elements of $I(\mathbb{R})$
    \item $\overline{I(\mathbb{R})}=I(\mathbb{R})\cup\{-\infty,+\infty\}$
     \item $\textbf{0}$ represents the interval $[0,0]$
    \item $\mathcal{X}$ denotes a finite dimensional Banach space
    \item $B_{\delta}(\bar{x})~\text{is an open ball of radius}~\delta~\text{centered at}~\bar{x}$.
\end{itemize}
	
	Consider two intervals ${\textbf A} = [\underline{a}, \overline{a}]$ and $\textbf{B} = \left[\underline{b}, \overline{b}\right]$.
	The \emph{addition} of ${\textbf A}$ and ${\textbf B}$, denoted ${\textbf A} \oplus {\textbf B}$, is defined by
	\[
	{\textbf A} \oplus {\textbf B} =\left[~\underline{a} + \underline{b},~ \overline{a} + \overline{b}~\right].
	\]
	The \emph{addition} of ${\textbf A}$ and a real number $a$, denoted ${\textbf A} \oplus a$, is defined by
	\[\textbf{A}\oplus a= \textbf{A}\oplus[a,a]=\left[~\underline{a} + a,~ \overline{a} + a~\right].\]
	The \emph{subtraction} of ${\textbf B}$ from ${\textbf A}$, denoted ${\textbf A} \ominus {\textbf B}$, is defined by
	\[
	{\textbf A} \ominus {\textbf B} =  \left[~\underline{a} - \overline{b},~ \overline{a} - \underline{b}~\right].
	\]
	The \emph{multiplication} by a real number $\mu$ to  ${\textbf A}$, denoted $\mu \odot  {\textbf A}$ or ${\textbf A} \odot \mu$, is defined by
	\[
	\mu \odot  {\textbf A} =  {\textbf A} \odot  \mu =
	\begin{cases}
	[\mu \underline{a},~\mu \overline{a}], & \text{if $\mu \geq 0$}\\
	[\mu \overline{a},~\mu \underline{a}], & \text{if $\mu < 0.$}
	\end{cases}
	\]
	\begin{definition} (\emph{$gH$-difference of intervals} \cite{stefanini2009generalized}). Let $\textbf{A}$ and $\textbf{B}$ be two elements of $I(\mathbb{R})$. The $gH$-difference between $\textbf{A}$ and $\textbf{B}$ is defined as the interval $\textbf C$ such that
		\begin{equation*}
		\textbf{C}=\textbf{A}\ominus_{gH}\textbf{B}\Longleftrightarrow \begin{cases*}
		\textbf{A}=\textbf{B}\oplus \textbf{C}\\
		\text{or}\\
		\textbf{B}=\textbf{A}\ominus \textbf{C}.
		\end{cases*}
		\end{equation*}
	\end{definition}
	\noindent For $\textbf{A}=[\underline{a},\overline{a}]$ and $\textbf{B}=[\underline{b},\overline{b}]$, $\textbf{A}\ominus_{gH}\textbf{B}$ is given by (see \cite{stefanini2009generalized})
	 \[\textbf{A}\ominus_{gH}\textbf{B}=[\min\{\underline{a}-\underline{b},\overline{a}-\overline{b}\},\max\{\underline{a}-\underline{b},\overline{a}-\overline{b}\}].\]
	Also, if $\textbf{A}=[\underline{a},\overline{a}]$ and $a$ be any real number, then we have
	\[\textbf{A}\ominus_{gH} a=\textbf{A}\ominus_{gH}[a,a]=[\min\{\underline{a}-a,\overline{a}-a\},\max\{\underline{a}-a,\overline{a}-a\}].\]

	\begin{definition}\label{g99}(\emph{Dominance of intervals} \cite{wu2008wolfe}). Let $\textbf{A}=[\underline{a},\overline{a}]$ and $\textbf{B}=[\underline{b},\overline{b}]$ be two elements of $I(\mathbb{R})$. Then,
		\begin{enumerate}[(i)]
			\item $\textbf{B}$ is said to be dominated by $\textbf{A}$ if $\underline{a}\leq\underline{b}$ and $\overline{a}\leq\overline{b}$, and then we write $\textbf{A}\preceq \textbf{B}$;
			\item $\textbf{B}$ is said to be strictly dominated by $\textbf{A}$ if $\textbf{A}\preceq \textbf{B}$ and $\textbf{A}\neq\textbf{B}$, and then we write $\textbf{A}\prec\textbf{B}.$ Equivalently, $\textbf{A}\prec \textbf{B}$ if and only if any of the following cases hold:
			\begin{enumerate}[$\bullet$ \textbf{Case} 1.]
			\item $\underline{a}<\underline{b}$ and $\overline{a}\leq \overline{b}$,
			\item $\underline{a}\leq \underline{b}$ and $\overline{a}<\overline{b}$,
			\item $\underline{a}<\underline{b}$ and $\overline{a}< \overline{b}$;
			\end{enumerate}
			\item if neither $\textbf{A}\preceq \textbf{B}$ nor $\textbf{B}\preceq \textbf{A}$, we say that none of $\textbf{A}$ and $\textbf{B}$ dominates the other, or $\textbf{A}$ and $\textbf{B}$ are not comparable. Equivalently, $\textbf{A}$ and $\textbf{B}$ are not comparable if either `$\underline{a}<\underline{b}$ and $\overline{a}>\overline{b}$' or `$\underline{a}>\underline{b}$ and $\overline{a}<\overline{b}$';
			\item $\textbf{B}$ is said to be not dominated by $\textbf{A}$ if either  $\textbf{B}\preceq\textbf{A}$ or $\textbf{A}$ and $\textbf{B}$ are not comparable, and then we write $\textbf{A}\nprec\textbf{B}.$ Similarly, a real number $a$ is said to be not dominated by $\textbf{A}$ if either $[a,a]\preceq\textbf{A}$ or $\textbf{A}$ and $[a,a]$ are not comparable, and then we write $\textbf{A}\nprec a$.
		\end{enumerate}
	\end{definition}
	\begin{remark}
		By Definition \ref{g99}, it is easy to see that for any $\textbf{A},~\textbf{B}\in I(\mathbb{R})$ either $\textbf{A}\prec\textbf{B}$ or $\textbf{A}\nprec\textbf{B}$.
	\end{remark}
	In the following two lemmas, we give a few inequalities about intervals and their norms. The norm of an interval $\textbf{A} = \left[\underline{a}, \bar{a}\right]$ is defined by (see \cite{moore1966interval})
		\[
		{\lVert \textbf{A} \rVert}_{I(\mathbb{R})} = \max \{|\underline{a}|, |\bar{a}|\}.
		\]
	It is noteworthy that the set $I(\mathbb{R})$ equipped with the norm ${\lVert . \rVert}_{I(\mathbb{R})}$ is a normed quasilinear space with respect to the operations $\oplus, \ominus_{gH}$ and $\odot$ (see \cite{lupulescu2015fractional}).

	\begin{lemma}\label{g23} Let  $\textbf{A},~\textbf{B},~\textbf{C}$ and $\textbf{D}$ be elements of $I(\mathbb{R})$. Then,
		\begin{enumerate}[(i)]
			\item\label{1} 
			$\lVert \textbf{A}\oplus \textbf{B}\rVert_{I(\mathbb{R})}\leq \lVert\textbf{A}\rVert_{I(\mathbb{R})}+\lVert\textbf{B}\rVert_{I(\mathbb{R})}$ \big(triangle inequality for the elements of $I(\mathbb{R})$\big),
			\item \label{2}	
			if $\textbf{A}\preceq \textbf{C}$ and $\textbf{B}\preceq \textbf{D}$, then $\textbf{A}\oplus \textbf{B}\preceq \textbf{C}\oplus \textbf{D}$,
		\end{enumerate}
	\end{lemma}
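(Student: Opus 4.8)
The plan is to reduce both claims to elementary inequalities between the real endpoints, exploiting the fact that $\oplus$ acts componentwise on the endpoints of intervals. Throughout I would write $\textbf{A}=[\underline{a},\overline{a}]$, $\textbf{B}=[\underline{b},\overline{b}]$, $\textbf{C}=[\underline{c},\overline{c}]$, and $\textbf{D}=[\underline{d},\overline{d}]$, and invoke only the definitions of $\oplus$, of the norm $\lVert\cdot\rVert_{I(\mathbb{R})}$, and of the dominance order $\preceq$ recalled above, together with the corresponding facts for real numbers.

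For part (i), I would first expand the left-hand side using the definition of $\oplus$ and of the norm, obtaining $\lVert\textbf{A}\oplus\textbf{B}\rVert_{I(\mathbb{R})}=\max\{|\underline{a}+\underline{b}|,\,|\overline{a}+\overline{b}|\}$. Then I would apply the ordinary triangle inequality on $\mathbb{R}$ to each endpoint separately, namely $|\underline{a}+\underline{b}|\le|\underline{a}|+|\underline{b}|$ and $|\overline{a}+\overline{b}|\le|\overline{a}|+|\overline{b}|$, and bound each summand by the appropriate interval norm via $|\underline{a}|,|\overline{a}|\le\lVert\textbf{A}\rVert_{I(\mathbb{R})}$ and $|\underline{b}|,|\overline{b}|\le\lVert\textbf{B}\rVert_{I(\mathbb{R})}$. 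Taking the maximum over the two endpoints then yields the stated bound. For part (ii), I would translate the two hypotheses into endpoint form via the definition of dominance, obtaining $\underline{a}\le\underline{c}$, $\overline{a}\le\overline{c}$, $\underline{b}\le\underline{d}$, and $\overline{b}\le\overline{d}$. Adding the two lower-endpoint inequalities gives $\underline{a}+\underline{b}\le\underline{c}+\underline{d}$, and adding the two upper-endpoint inequalities gives $\overline{a}+\overline{b}\le\overline{c}+\overline{d}$; since these are precisely the endpoints of $\textbf{A}\oplus\textbf{B}$ and $\textbf{C}\oplus\textbf{D}$, the definition of $\preceq$ immediately delivers $\textbf{A}\oplus\textbf{B}\preceq\textbf{C}\oplus\textbf{D}$.

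I do not anticipate any genuine obstacle here: both statements are direct consequences of the componentwise definitions of interval addition, the interval norm, and the dominance order. The only point that requires a moment of care is, in part (i), ensuring that the bound is uniform across both endpoints so that the maximum on the left is controlled; this is handled cleanly by bounding each real endpoint by the \emph{full} interval norm before taking the maximum, rather than attempting to match endpoints one-to-one.
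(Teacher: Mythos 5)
Your proof is correct and follows essentially the same route as the paper's: for part (i) the paper likewise expands $\lVert\textbf{A}\oplus\textbf{B}\rVert_{I(\mathbb{R})}=\max\{\lvert\underline{a}+\underline{b}\rvert,\lvert\overline{a}+\overline{b}\rvert\}$ and applies the real triangle inequality together with $\lvert\underline{a}\rvert,\lvert\overline{a}\rvert\leq\lVert\textbf{A}\rVert_{I(\mathbb{R})}$, and for part (ii) it adds the endpoint inequalities exactly as you do. The only cosmetic difference is that the paper organizes part (i) as two cases according to which endpoint attains the maximum, while you bound both endpoints uniformly before taking the maximum --- the same estimate either way.
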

	\begin{proof}
		See \ref{g24}.
	\end{proof}
	\begin{lemma}\label{g28}(\emph{Properties of the elements of $I(\mathbb{R})$ under $gH$-difference}). For all elements $\textbf{A},~\textbf{B},~\textbf{C},~\textbf{D}\in I(\mathbb{R})$ and $\epsilon>0$, we have
		\begin{enumerate}[(i)]
			\item\label{6} $\lVert \textbf{A}\ominus_{gH}\textbf{B}\rVert_{I(\mathbb{R})}<\epsilon\iff\textbf{B}\ominus_{gH}[\epsilon,\epsilon]\prec\textbf{A}\prec\textbf{B}\oplus[\epsilon,\epsilon]$,
			\item\label{8} $\textbf{A}\ominus_{gH}[\epsilon,\epsilon]\nprec\textbf{B}\implies\textbf{A}\npreceq\textbf{B}$,
		\end{enumerate}
	\end{lemma}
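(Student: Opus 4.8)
The plan is to push everything down to the endpoints, since $\ominus_{gH}$, $\oplus$, the norm $\lVert\cdot\rVert_{I(\mathbb{R})}$, and the order $\preceq$ are all defined coordinatewise. Throughout, write $\textbf{A}=[\underline{a},\overline{a}]$ and $\textbf{B}=[\underline{b},\overline{b}]$.

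For (\ref{6}), I would first establish the closed form $\lVert\textbf{A}\ominus_{gH}\textbf{B}\rVert_{I(\mathbb{R})}=\max\{|\underline{a}-\underline{b}|,|\overline{a}-\overline{b}|\}$. This follows from the endpoint formula for $\textbf{A}\ominus_{gH}\textbf{B}$ together with the elementary identity $\max\{|\min\{p,q\}|,|\max\{p,q\}|\}=\max\{|p|,|q|\}$, valid for all reals $p,q$; a one-line sign check on $p=\underline{a}-\underline{b}$ and $q=\overline{a}-\overline{b}$ settles it. Hence $\lVert\textbf{A}\ominus_{gH}\textbf{B}\rVert_{I(\mathbb{R})}<\epsilon$ is equivalent to the two-sided strict bounds $\underline{b}-\epsilon<\underline{a}<\underline{b}+\epsilon$ and $\overline{b}-\epsilon<\overline{a}<\overline{b}+\epsilon$.

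Next I would rewrite the right-hand side in the same language. As $\epsilon>0$, the formulas give $\textbf{B}\ominus_{gH}[\epsilon,\epsilon]=[\underline{b}-\epsilon,\overline{b}-\epsilon]$ and $\textbf{B}\oplus[\epsilon,\epsilon]=[\underline{b}+\epsilon,\overline{b}+\epsilon]$, so unwinding the two relations in $\textbf{B}\ominus_{gH}[\epsilon,\epsilon]\prec\textbf{A}\prec\textbf{B}\oplus[\epsilon,\epsilon]$ through Definition \ref{g99} produces lower and upper bounds on $\underline{a}$ and $\overline{a}$ that I then match against the four inequalities coming from the norm. Reading this chain of equivalences in both directions closes the argument. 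The step demanding the most care is the strictness bookkeeping: the strict norm inequality $<\epsilon$ has to be reconciled with the order $\prec$, which (by Definition \ref{g99}) permits one endpoint to coincide with the shifted interval provided the other is strict, so I must track the boundary configurations carefully in order to line the two formulations up.

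For (\ref{8}) I would argue by contraposition using the monotonicity of addition. Assume $\textbf{A}\preceq\textbf{B}$. Since $\epsilon>0$ we may write $\textbf{A}\ominus_{gH}[\epsilon,\epsilon]=\textbf{A}\oplus[-\epsilon,-\epsilon]$, and because $[-\epsilon,-\epsilon]\preceq\textbf{0}$, Lemma \ref{g23}(\ref{2}) gives $\textbf{A}\oplus[-\epsilon,-\epsilon]\preceq\textbf{B}\oplus\textbf{0}=\textbf{B}$. This dominance is strict, since the lower endpoints satisfy $\underline{a}-\epsilon<\underline{a}\leq\underline{b}$, so $\textbf{A}\ominus_{gH}[\epsilon,\epsilon]\neq\textbf{B}$ and thus $\textbf{A}\ominus_{gH}[\epsilon,\epsilon]\prec\textbf{B}$. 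Taking the contrapositive and using the Remark that $\nprec$ is the exact negation of $\prec$ yields $\textbf{A}\ominus_{gH}[\epsilon,\epsilon]\nprec\textbf{B}\implies\textbf{A}\npreceq\textbf{B}$, as required. I expect (\ref{8}) to be routine; the only genuine obstacle in the lemma is the strictness matching in (\ref{6}).
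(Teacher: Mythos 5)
Your overall route for (\ref{6}) is the same endpoint reduction the paper uses, and your closed form $\lVert\textbf{A}\ominus_{gH}\textbf{B}\rVert_{I(\mathbb{R})}=\max\{\lvert\underline{a}-\underline{b}\rvert,\lvert\overline{a}-\overline{b}\rvert\}$, obtained from $\max\{\lvert\min\{p,q\}\rvert,\lvert\max\{p,q\}\rvert\}=\max\{\lvert p\rvert,\lvert q\rvert\}$, is a genuine streamlining: the paper instead splits into four cases according to which form $\textbf{A}\ominus_{gH}\textbf{B}$ takes and which endpoint attains the norm. However, the step you defer (``track the boundary configurations carefully in order to line the two formulations up'') is not bookkeeping that can be completed under the literal Definition \ref{g99}: since $\prec$ permits equality in one component, $\textbf{B}\ominus_{gH}[\epsilon,\epsilon]\prec\textbf{A}\prec\textbf{B}\oplus[\epsilon,\epsilon]$ yields only $\underline{b}-\epsilon\leq\underline{a}\leq\underline{b}+\epsilon$ and $\overline{b}-\epsilon\leq\overline{a}\leq\overline{b}+\epsilon$ with one strict inequality on each side, and the backward implication then genuinely fails at the boundary. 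Concretely, for $\textbf{A}=[-1,1]$, $\textbf{B}=[0,1]$, $\epsilon=1$ one has $[-1,0]\prec[-1,1]\prec[1,2]$, yet $\textbf{A}\ominus_{gH}\textbf{B}=[-1,0]$ and $\lVert\textbf{A}\ominus_{gH}\textbf{B}\rVert_{I(\mathbb{R})}=1=\epsilon$. You should be aware that the paper's own converse has the same looseness: its displayed chain extracts $\underline{b}-\epsilon<\underline{a}$ and $\overline{b}-\epsilon<\overline{a}$ directly from $\prec$, i.e.\ it silently reads $\prec$ as strict in both endpoints, and only under that stronger reading does the equivalence hold (for you and for the paper alike). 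So you correctly located the one real obstacle, but the honest resolution is to state and use the both-endpoints-strict reading of $\prec$ (or weaken the conclusion of the converse), not to hope the boundary configurations can be matched.

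Your proof of (\ref{8}) is correct and takes a different, cleaner route than the paper's. The paper argues directly: it unfolds $\textbf{A}\ominus_{gH}[\epsilon,\epsilon]\nprec\textbf{B}$ into the three cases of Definition \ref{g99}(iv) and extracts $\underline{a}>\underline{b}$ or $\overline{a}>\overline{b}$ in each, whence $\textbf{A}\npreceq\textbf{B}$. You instead take the contrapositive: from $\textbf{A}\preceq\textbf{B}$, the identity $\textbf{A}\ominus_{gH}[\epsilon,\epsilon]=\textbf{A}\oplus[-\epsilon,-\epsilon]$ (valid since $[\epsilon,\epsilon]$ is degenerate), monotonicity via Lemma \ref{g23}(\ref{2}), and the strict drop $\underline{a}-\epsilon<\underline{a}\leq\underline{b}$ give $\textbf{A}\ominus_{gH}[\epsilon,\epsilon]\prec\textbf{B}$; the contraposition is legitimate because, as the Remark after Definition \ref{g99} records, $\nprec$ is the exact negation of $\prec$. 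This buys you a case-free argument at the cost of invoking the monotonicity lemma; the paper's version is more pedestrian but self-contained.
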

	\begin{proof}
		See \ref{g35}.
	\end{proof}
	\begin{definition}\label{g11}(\emph{Infimum of a subset of $\overline{I(\mathbb{R})}$}). Let $\textbf{S}\subseteq \overline{I(\mathbb{R})}$. An interval $\mathbf{\bar{A}}\in I(\mathbb{R})$ is said to be a lower bound of $\textbf{S}$ if $\mathbf{\bar{A}}\preceq \textbf{B}$ for all $\textbf{B}$ in $\textbf{S}.$ A lower bound $\mathbf{\bar{A}}$ of $\textbf{S}$ is called an infimum of $\textbf{S}$ if for all lower bounds $\textbf{C}$ of $\textbf{S}$ in $I(\mathbb{R}),~\textbf{C}\preceq \mathbf{\bar{A}}.$ We denote infimum of $\textbf{S}$ by $\inf\textbf{S}.$
	\end{definition}
	\begin{example}
		Let $\textbf{S} = \big\{\big[\frac{1}{n},1\big]:n\in \mathbb{N}\big\}$. The set of lower bounds of $\textbf{S}$ is
		\begin{equation*}
		\{[\alpha,\beta]: -\infty<\alpha\leq0 ~\text{and}~ -\infty<\beta\leq1\}.
		\end{equation*}
		Therefore, the infimum of $\textbf{S}$ is $[0,1]$ because $[\alpha,\beta]\preceq [0,1]$ for all $-\infty<\alpha\leq 0 ~\text{and}~ -\infty<\beta\leq1$.
	\end{example}
	\begin{definition}\label{12}(\emph{Supremum of a subset of $\overline{I(\mathbb{R})}$}). Let $\textbf{S}\subseteq \overline{I(\mathbb{R})}$. An interval $\mathbf{\bar{A}}\in I(\mathbb{R})$ is said to be an upper bound of $\textbf{S}$ if $\textbf{B}\preceq \mathbf{\bar{A}}$ for all $\textbf{B}$ in $\textbf{S}$. An upper bound $\mathbf{\bar{A}}$ of $\textbf{S}$ is called a supremum of $\textbf{S}$ if for all upper bounds $\textbf{C}$ of $\textbf{S}$ in $I(\mathbb{R}),~ \mathbf{\bar{A}}\preceq \textbf{C}$. We denote supremum of $\textbf{S}$ by $\sup\textbf{S}$.
	\end{definition}
	\begin{example}
		Let $\textbf{S} = \big\{\big[\frac{1}{n^2}+1,3\big]:n\in \mathbb{N}\big\}$. The set of upper bounds of $\textbf{S}$ is
		\begin{equation*}
		\{[\alpha,\beta]:  2\leq \alpha<+\infty ~\text{and}~ 3\leq \beta<+\infty\}.
		\end{equation*}
		Therefore, the supremum of $\textbf{S}$ is $ [2,3]$ because $[2,3]\preceq [\alpha,\beta]$ for all $2\leq \alpha<+\infty ~\text{and}~ 3\leq \beta<+\infty.$
	\end{example}
	\begin{remark}\label{g54}
		Let $\textbf{S}=\left\{[a_\alpha,b_\alpha]\in  \overline{I(\mathbb{R})}: \alpha\in\Lambda~ \text{and}~ \Lambda~\text{being an index set}~\right\}$. Then, by Definition \ref{g11} and \ref{12}, it follows that $\inf\textbf{S}=\left[\inf\limits_{\alpha\in\Lambda}a_\alpha,~\inf\limits_{\alpha\in\Lambda}b_\alpha\right]$ and $\sup\textbf{S}=\left[\sup\limits_{\alpha\in\Lambda}a_\alpha,~\sup\limits_{\alpha\in\Lambda}b_\alpha\right].$ It is evident that if $\inf \textbf{S}$ and $\sup \textbf{S}$ exist for an $\textbf{S}$, then they are unique.
	\end{remark}
	\begin{note}\label{p4}
Infimum and supremum of a subset of $\overline{I(\mathbb{R})}$ may not exist. For instance, consider  $\textbf{S}=\{[-2,-1],[-3,-1],[-4,-1],\cdots\}$. Here, $\textbf{S}$ has no lower bound in $I(\mathbb{R})$ as $\{-2,-3,-4,\cdots\}$ has no lower bound in $\mathbb{R}.$ Therefore, infimum of $\textbf{S}$ does not  exist in $I(\mathbb{R})$.
	\end{note}
	\begin{remark}
		\begin{enumerate}[(i)]
			\item It is noteworthy that infimum and supremum of a subset of $\overline{I(\mathbb{R})}$ always exist in $\overline{I(\mathbb{R})}$. For instance, consider $\textbf{S}$ as in Note \ref{p4}.
			Here, infimum of $\textbf{S}$ does not  exist in $I(\mathbb{R})$ but exists in $\overline{I(\mathbb{R})}$. Note that infimum of $\textbf{S}$ is $-\infty.$
			\item Infimum and supremum of a finite subset $S$ of real numbers always belong to the set $S$ but this is not true for a finite subset of $I(\mathbb{R}).$ For instance, consider $\textbf{S}=\{[-2,4],[-1,3]\}.$ Then, $\inf \textbf{S}=[-2,3]$ and $\sup\textbf{S}=[-1,4].$
		\end{enumerate}
	\end{remark}
	\begin{definition}\label{g15}(\emph{Infimum of an IVF}). Let $\mathcal{S}$ be a nonempty subset of $\mathcal{X}$ and $\textbf{F}: \mathcal{S}\rightarrow \overline{I(\mathbb{R})}$ be an extended IVF. Then infimum of $\textbf{F}$, denoted as $\inf\limits_{x\in \mathcal{S}} \textbf{F}(x)$ or $\inf\limits_{\mathcal{S}} \textbf{F}$, is equal to the infimum of the range set of $\textbf{F}$, i.e.,
		\begin{equation*}
		{\inf\limits_{\mathcal{S}} \textbf{F}=\inf\{\textbf{F}(x):x\in\mathcal{S}\}.}
		\end{equation*}
		Similarly, the supremum of an IVF is defined by
		\begin{equation*}
		\sup\limits_{\mathcal{S}} \textbf{F}=\sup\{\textbf{F}(x):x\in\mathcal{S}\}.
		\end{equation*}
	\end{definition}
		\begin{definition}(\emph{Sequence in ${I(\mathbb{R})}$}). An IVF $\textbf{F}:\mathbb{N}\rightarrow {I(\mathbb{R})}$ is called a sequence in ${I(\mathbb{R})}$.
	The image of $n$th element, $\textbf{F}(n)$, is said to be the $n$th element of the sequence $\textbf{F}.$
		We denote a sequence $\textbf{F}$ by $\{\textbf{F}(n)\}$.
	\end{definition}
	\begin{example}
		\begin{enumerate}[(i)]
			\item  $\textbf{F}:\mathbb{N}\rightarrow {I(\mathbb{R})}$ that is defined by  $\textbf{F}(n)=[n,n+1]$ is a sequence.
			\item $\textbf{F}:\mathbb{N}\rightarrow {I(\mathbb{R})}$ that is defined by $\textbf{F}(n)=\big[\frac{n}{4},\frac{n}{2}\big]$ is also a sequence.
		\end{enumerate}
	\end{example}
	\begin{definition}\label{g22}(\emph{Convergence of a sequence in ${I(\mathbb{R})}$}).
		\begin{enumerate}
			\item A sequence $\{\textbf{F}(n)\}$ is said to converge to $\textbf{L}\in I(\mathbb{R})$ if for each $\epsilon>0$, there exists an integer $m>0$ such that
			\begin{equation*}
			\lVert \textbf{F}(n)\ominus_{gH}\textbf{L}\rVert_{I(\mathbb{R})}<\epsilon~\text{for all}~ n\geq m.
			\end{equation*}
		 The interval $\textbf{L}$ is called limit of the sequence $\{\textbf{F}(n)\}$ and is presented by $\lim\limits_{n\rightarrow +\infty}\textbf{F}(n)=\textbf{L}$ or $\textbf{F}(n)\rightarrow \textbf{L}$.
			\item  We say the limit of a sequence $\{\textbf{F}(n)\}$ is $+\infty$ if for every real number $a>0$, there exists an integer $m>0$ such that
			\begin{equation*}
			[a,a]\prec\textbf{F}(n)~\text{for all}~ n\geq m.
			\end{equation*}
			\item We say the limit of a sequence $\{\textbf{F}(n)\}$ is $-\infty$ if for every real number $a>0$, there exists an integer $m>0$ such that
			\begin{equation*}
			\textbf{F}(n)\prec [-a,-a]~\text{for all}~ n\geq m.
			\end{equation*}
		\end{enumerate}
	\end{definition}
	\begin{example}
		Consider the sequence $\textbf{F}(n)=\left[\frac{1}{n},1\right],~n\in\mathbb{N}$, in ${I(\mathbb{R})}$.\\
		Let $\epsilon>0$ be given. Note that
		\begin{equation*}
		\lVert \textbf{F}(n)\ominus_{gH}[0,1]\rVert_{I(\mathbb{R})}=\left\lVert \left[\frac{1}{n},1\right]\ominus_{gH}[0,1]\right\rVert_{I(\mathbb{R})}=\left\lVert\left[0,\frac{1}{n}\right]\right\rVert_{I(\mathbb{R})}=\frac{1}{n}<\epsilon~\text{whenever}~ n>\frac{1}{\epsilon}.
		\end{equation*}
		So, by taking $m=\lfloor\frac{1}{\epsilon}\rfloor+1$, where $\lfloor \cdot \rfloor$ is the floor function, we have\\
		\begin{equation*}
		\lVert \textbf{F}(n)\ominus_{gH}[0,1]\rVert_{I(\mathbb{R})}<\epsilon~\text{for all}~ n\geq m.
		\end{equation*}
		Thus, $\lim\limits_{n\rightarrow +\infty}\textbf{F}(n)=\textbf{L}=[0,1].$
	\end{example}
	\begin{note}
	Let $\left\{\textbf{F}(n)\right\}$ be a sequence in $I(\mathbb{R})$ with $\textbf{F}(n)=\left[\underline{f}(n),\overline{f}(n)\right],$ where $\left\{\underline{f}(n)\right\}$ and $\left\{\overline{f}(n)\right\}$ be two convergent sequences in $\mathbb{R}.$ Then, $\left\{\textbf{F}(n)\right\}$ is convergent and $$\lim\limits_{n\rightarrow +\infty}\textbf{F}(n)=\left[\lim\limits_{n\rightarrow +\infty}\underline{f}(n),\lim\limits_{n\rightarrow +\infty}\overline{f}(n)\right].$$
	The reason is as follows. \\
	
	Suppose $\underline{f}(n)$ and $\overline{f}(n)$ are convergent sequences with limits $l_1$ and $l_2$, respectively. Then, for each $\epsilon>0$, there exist positive integers $m_1$ and $m_2$ such that
	\begin{eqnarray*}
	&&\left\lvert \underline{f}(n)-l_1\right\rvert<\epsilon~\text{for all}~n\geq m_1,~\text{and}~\left\lvert\overline{f}(n)-l_2\right\rvert<\epsilon~\text{for all}~n\geq m_2\\&\iff& \max\left\{\left\lvert \underline{f}(n)-l_1\right\rvert, \left\lvert\overline{f}(n)-l_2\right\rvert\right\}<\epsilon~\text{for all}~n\geq m,~\text{where}~m=\max\{m_1,m_2\}\\&\iff&\left\lVert \left[\underline{f}(n),\overline{f}(n)\right]\ominus_{gH}[l_1,l_2]\right\rVert_{I(\mathbb{R})}<\epsilon~\text{for all}~n\geq m\\&\text{i.e.,}&\left\lVert \textbf{F}(n)\ominus_{gH}[l_1,l_2]\right\rVert_{I(\mathbb{R})}<\epsilon~\text{for all}~n\geq m.
	\end{eqnarray*}
	Thus, $$\lim\limits_{n\rightarrow +\infty}\textbf{F}(n)=[l_1,l_2]=\left[\lim\limits_{n\rightarrow +\infty}\underline{f}(n),\lim\limits_{n\rightarrow +\infty}\overline{f}(n)\right].$$
	\end{note}
	\begin{definition}\label{g42}(\emph{Bounded sequence in ${I(\mathbb{R})}$}). A sequence $\{\textbf{F}(n)\}$ is said to be bounded above if there exists an interval $\mathbf{K}_{1}\in I(\mathbb{R})$ such that
		\begin{equation*}
		\textbf{F}(n)\preceq \mathbf{K}_{1}~\text{for all}~ n\in\mathbb{N}.
		\end{equation*}
		A sequence $\{\textbf{F}(n)\}$ is said to be bounded below if there exists an interval $\mathbf{K}_{2}\in I(\mathbb{R})$ such that
		\begin{equation*}
		\mathbf{K}_{2}\preceq\textbf{F}(n)~\text{for all}~ n\in\mathbb{N}.
		\end{equation*}
		A sequence $\{\textbf{F}(n)\}$ is said to be bounded if it is both bounded above and below.
	\end{definition}
	\begin{definition}
	A sequence $\{\textbf{F}(n)\}$ is said to be monotonic increasing sequence if $\textbf{F}(n)\preceq\textbf{F}(n+1)$ for all $n\in\mathbb{N}.$
	\end{definition}
	\begin{lemma}\label{p}
	A bounded above monotonic increasing sequence of intervals is convergent and converges to its supremum.
	\end{lemma}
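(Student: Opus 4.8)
The plan is to reduce the interval statement to the classical monotone convergence theorem for real sequences by splitting $\textbf{F}(n)$ into its two endpoints. First I would write $\textbf{F}(n) = [\underline{f}(n), \overline{f}(n)]$ and unpack the hypotheses through Definition \ref{g99}. Since $\textbf{F}(n) \preceq \textbf{F}(n+1)$ means precisely $\underline{f}(n) \leq \underline{f}(n+1)$ and $\overline{f}(n) \leq \overline{f}(n+1)$, both real sequences $\{\underline{f}(n)\}$ and $\{\overline{f}(n)\}$ are monotonically increasing. Likewise, writing the upper bound as $\mathbf{K}_{1} = [\underline{k}, \overline{k}]$, the relation $\textbf{F}(n) \preceq \mathbf{K}_{1}$ yields $\underline{f}(n) \leq \underline{k}$ and $\overline{f}(n) \leq \overline{k}$ for all $n$, so both endpoint sequences are bounded above in $\mathbb{R}$.

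Next I would apply the classical monotone convergence theorem to each endpoint sequence: $\{\underline{f}(n)\}$ converges to $l_1 := \sup_n \underline{f}(n)$ and $\{\overline{f}(n)\}$ converges to $l_2 := \sup_n \overline{f}(n)$, both finite. Having convergence of the two endpoint sequences, I would invoke the Note preceding this lemma, which guarantees that $\{\textbf{F}(n)\}$ is then convergent with $\lim_{n\rightarrow +\infty} \textbf{F}(n) = [l_1, l_2]$.

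It remains to identify this limit as the supremum of the sequence. Here I would apply Remark \ref{g54}, which expresses the supremum of a family of intervals componentwise, giving $\sup_n \textbf{F}(n) = \big[\sup_n \underline{f}(n),\ \sup_n \overline{f}(n)\big] = [l_1, l_2]$. Comparing with the previous step, $\lim_{n\rightarrow +\infty}\textbf{F}(n) = [l_1, l_2] = \sup_n \textbf{F}(n)$, which is the desired conclusion.

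The argument is mostly bookkeeping; the only point demanding care is the correct translation of the order and convergence notions -- the componentwise characterization of $\preceq$ in Definition \ref{g99} and of $\sup$ in Remark \ref{g54} -- so that the interval limit and the interval supremum both legitimately decouple into their endpoint counterparts. Once that is set up, the classical monotone convergence theorem and the Note do all the analytic work, and the finiteness of $l_1$ and $l_2$ (hence $\sup_n \textbf{F}(n) \in I(\mathbb{R})$) follows immediately from boundedness above.
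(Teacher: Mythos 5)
Your proof is correct, but it takes a genuinely different route from the paper's. The paper argues intrinsically at the interval level: writing $\textbf{M}=\sup_n\textbf{F}(n)$, it invokes (as a consequence of Definition \ref{12}) the approximation property that for each $\epsilon>0$ there is an $m$ with $\textbf{M}\ominus_{gH}[\epsilon,\epsilon]\prec\textbf{F}(m)$, then uses monotonicity to sandwich $\textbf{F}(n)$ between $\textbf{M}\ominus_{gH}[\epsilon,\epsilon]$ and $\textbf{M}\oplus[\epsilon,\epsilon]$ for all $n\geq m$, and concludes convergence via the norm characterization in (\ref{6}) of Lemma \ref{g28} --- essentially transplanting the classical monotone-convergence argument into $(I(\mathbb{R}),\preceq,\lVert\cdot\rVert_{I(\mathbb{R})})$. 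You instead decouple into endpoint sequences, apply the real monotone convergence theorem twice, and reassemble via the Note preceding the lemma and Remark \ref{g54}. Your route buys rigor at exactly the point where the paper is thinnest: the $\epsilon$-approximation property of the interval supremum is \emph{not} literal from Definition \ref{12} (which only says ``least upper bound'' in a partial order); justifying it requires the componentwise representation of $\sup$ and, in general, an index $m$ working simultaneously for both endpoints --- which monotonicity guarantees, and which your appeal to the classical theorem handles automatically. The paper's proof, in exchange, is shorter and stays within the ordered-quasilinear-space framework, so it would serve as a template in settings with no endpoint decomposition. Two small points you leave implicit but should state: $l_1\leq l_2$ (so that $[l_1,l_2]\in I(\mathbb{R})$), immediate from $\underline{f}(n)\leq\overline{f}(n)$ for all $n$; and that Remark \ref{g54} is what certifies the componentwise pair of suprema really is the supremum in the sense of Definition \ref{12} --- you do cite it, which suffices.
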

	\begin{proof}
	 Let $\{\textbf{F}(n)\}$ be a bounded above monotonic increasing sequence and $\textbf{M}$ be its supremum.\\
	
	 Then, by Definition \ref{12}, we have
	 \begin{enumerate}[(i)]
	     \item  $\textbf{F}(n)\preceq\textbf{M}$ for all $n\in\mathbb{N}$ and
	     \item for a given $\epsilon>0$, there exists an integer $m>0$ such that $\textbf{M}\ominus_{gH}[\epsilon,\epsilon]\prec\textbf{F}(m).$
	 \end{enumerate}
	 Since $\{\textbf{F}(n)\}$ is a monotonic increasing sequence,
	 \[\textbf{M}\ominus_{gH}[\epsilon,\epsilon]\prec\textbf{F}(m)\preceq\textbf{F}(m+1)\preceq\textbf{F}(m+2)\preceq\cdots\preceq\textbf{M}.\]
	 That is, $\textbf{M}\ominus_{gH}[\epsilon,\epsilon]\prec\textbf{F}(m)\prec\textbf{M}\oplus[\epsilon,\epsilon]$ for all $n\geq m$. Thus, the sequence $\{\textbf{F}(n)\}$ is convergent and $\lim\limits_{n\rightarrow +\infty}\textbf{F}(n)=\textbf{M}.$
	\end{proof}
	\begin{definition}(\emph{Limit inferior and limit superior of a sequence in ${I(\mathbb{R})}$}). Let $\{\textbf{F}(n)\}$ be a sequence. The limit inferior of $\{\textbf{F}(n)\}$, denoted $ \liminf\textbf{F}(n)$, is defined by
		\[\liminf\textbf{F}(n)=\lim\limits_{n\rightarrow +\infty}\inf\{\textbf{F}(n),\textbf{F}(n+1),\textbf{F}(n+2), \cdots\}.\]
		Similarly, limit superior of $\{\textbf{F}(n)\}$ is defined by
		\[\limsup\textbf{F}(n)=\lim\limits_{n\rightarrow +\infty}\sup\{\textbf{F}(n),\textbf{F}(n+1),\textbf{F}(n+2),\cdots\}.\]
	\end{definition}
	\begin{example}
		Consider the following sequence in   ${I(\mathbb{R})}$: $$ \textbf{F}(n)=
		\begin{cases*}
		\left[\frac{1}{n^2},\frac{1}{n^2}+1\right] &\text{if}~ $n$~\text{is odd}\\
		[n, n^2+1] & \text{if}~ $n$~\text{is even}.
		\end{cases*}$$
		It is easy to see that $\inf\limits_{n\in\mathbb{N}} \left[\frac{1}{n^2},\frac{1}{n^2}+1\right]=[0,1]$ and $\inf\limits_{n\in\mathbb{N}} \left[n, n^2+1\right]=[1,2]$.  Therefore,
		\[\lim\limits_{n\rightarrow +\infty}\inf\{\textbf{F}(n),\textbf{F}(n+1),\textbf{F}(n+2),\cdots\}=[0,1]~\text{and hence},~ \liminf\textbf{F}(n)=[0,1].\]
		Note that $\sup\limits_{n\in\mathbb{N}} \left[\frac{1}{n^2},\frac{1}{n^2}+1\right]=[1,2]$ and $\sup\limits_{n\in\mathbb{N}} \left[n, n^2+1\right]= +\infty$.  Thus,
		\[\lim\limits_{n\rightarrow +\infty}\sup\{\textbf{F}(n),\textbf{F}(n+1),\textbf{F}(n+2),\cdots\}=+\infty~\text{and hence},~\limsup\textbf{F}(n)=+\infty.\]
	\end{example}
	\section{$gH$-continuity and $gH$-semicontinuity of Interval-valued Functions}\label{sec3}
	In this section, we define $gH$-lower and $gH$-upper semicontinuity for extended IVFs and show that $gH$-continuity of an IVF implies $gH$-lower and upper semicontinuity and vice-versa. Further, we give a characterization of $gH$-lower semicontinuity in terms of the level sets of the IVF (Theorem \ref{n1}) and use this to prove that an extended $gH$-lower semicontinuous, level-bounded and proper IVF attains its minimum (Theorem \ref{g85}). We also give a characterization of the set argument minimum of an IVF (Theorem \ref{n2}).

	Throughout this section, an extended IVF is an IVF wih domain $\mathcal{X}$ and codomain $\overline{I(\mathbb{R})}.$
	\begin{definition}(\emph{$gH$-limit of an IVF}).
		Let $\textbf{F}:\mathcal{S}\rightarrow I(\mathbb{R})$ be an IVF on a nonempty subset $\mathcal{S}$ of $\mathcal{X}$. The function $\textbf{F}$ is called tending to a limit $\textbf{L}\in I(\mathbb{R})$ as $x$ tends to $\bar{x}$, denoted by $\lim\limits_{x\rightarrow\bar{x}}\textbf{F}(x)$, if for each $\epsilon>0,~\text{there exists a}~\delta>0$ such that
		\begin{equation*}
		\lVert\textbf{F}(x)\ominus_{gH}\textbf{L}\rVert_{I(\mathbb{R})}<\epsilon~\text{whenever}~ 0<\lVert x-\bar{x}\rVert_{\mathcal{X}}<\delta.
		\end{equation*}
	\end{definition}
	\begin{definition}(\emph{$gH$-continuity}). Let $\textbf{F}:\mathcal{S}\rightarrow I(\mathbb{R})$ be an IVF on a nonempty subset $\mathcal{S}$ of $\mathcal{X}$. The function $\textbf{F}$ is said to be $gH$-continuous at $\bar{x}\in\mathcal{S}$ if for each $\epsilon>0,~\text{there exists a}~\delta>0$ such that
		\begin{equation*}
		\lVert\textbf{F}(x)\ominus_{gH}\textbf{F}(\bar{x})\rVert_{I(\mathbb{R})}<\epsilon~\text{whenever}~\lVert x-\bar{x}\rVert_{\mathcal{X}}<\delta.
		\end{equation*}
	\end{definition}
	\begin{definition}\label{g5}(\emph{Lower limit and $gH$-lower semicontinuity of an extended IVF}).
		The lower limit of an extended IVF $\textbf{F}$ at $\bar{x}\in\mathcal{X}$, denoted  $\liminf\limits_{x\rightarrow \bar{x}}\textbf{F}(x)$, is defined  by
		\begin{eqnarray*}\label{g6}
			\liminf_{x\rightarrow \bar{x}}\textbf{F}(x)&=&\lim\limits_{\delta\downarrow 0}\left(\inf\{\textbf{F}(x):x\in B_{\delta}(\bar{x})\}\right)\nonumber\\&=&\sup_{\delta>0}\left(\inf\{\textbf{F}(x):x\in B_{\delta}(\bar{x})\}\right).
		\end{eqnarray*}
		$\textbf{F}$ is called $gH$-lower semicontinuous ($gH$-lsc) at a point $\bar{x}$ if
		\begin{equation}\label{g7}
		\textbf{F}(\bar{x})\preceq\liminf_{x\rightarrow \bar{x}}\textbf{F}(x).
		\end{equation}
	Further, \textbf{F} is called $gH$-lsc on $\mathcal{X}$ if (\ref{g7}) holds for every $\bar{x}\in \mathcal{X}$.
	\end{definition}
	\begin{example}
		Consider the following IVF $\textbf{F}: {\mathbb{R}}^2\rightarrow I(\mathbb{R})$:\\
		$$\textbf{F}(x_1,x_2)=
		\begin{cases*}
		[1,2]\odot\sin\left(\frac{1}{x_1}\right)\oplus\cos^2x_2 & \text{if}~ $x_1x_2\neq 0$\\
		[-2,-1] & \text{if}~ $x_1x_2= 0.$
		\end{cases*}$$
		The lower limit of $\textbf{F}$ at $(0,0)$ is given by
		\begin{equation}
		\liminf_{(x_1,x_2)\rightarrow (0,0)}\textbf{F}(x_1,x_2)=\lim\limits_{\delta\downarrow 0}\left(\inf\{\textbf{F}(x_1,x_2):(x_1,x_2)\in B_{\delta}(0,0)\}\right).\nonumber
		\end{equation}
		Note that as $x_1\rightarrow 0,~ \sin\left(\frac{1}{x_1}\right)$ oscillates between $-1$ and $1$. Therefore, for any $\delta>0$,  $$\inf_{(x_1,x_2)\in B_{\delta}(0,0)}\textbf{F}(x_1,x_2)=[1,2]\odot (-1)=[-2,-1].$$ Also, note that when $(x_1,x_2)=(0,0),~\textbf{F}(x_1,x_2)=[-2,-1].$ Thus,
		\begin{equation*}
		\liminf\limits_{(x_1,x_2)\rightarrow (0,0)}\textbf{F}(x_1,x_2)=[-2,-1].
		\end{equation*}
		Since $\textbf{F}(0,0)=[-2,-1]\preceq [-2,-1]=\liminf\limits_{(x_1,x_2)\rightarrow (0,0)}\textbf{F}(x_1,x_2)$, the function $\textbf{F}$ is $gH$-lsc at $(0,0)$.
	\end{example}
	\begin{note}\label{g75} Let $\textbf{F}$ be an extended IVF with $\textbf{F}(x)=\left[\underline{f}(x),\overline{f}(x)\right],$ where $\underline{f},~\overline{f}:\mathcal{X}\rightarrow\mathbb{R}\cup\{-\infty,+\infty\}$ be two extended real-valued functions. Then, $\textbf{F}$ is $gH$-lsc at $\bar{x}\in\mathcal{X}$ if and only if $\underline{f}$ and $\overline{f}$ both are lsc at $\bar{x}$. The reason is as follows.
		\begin{eqnarray*}
			\underline{f}~ \text{and}~ \overline{f}~ \text{are lsc at}~\bar{x}&\iff&\underline{f}(\bar{x})\leq\liminf_{x\rightarrow \bar{x}}\underline{f}(x)~\text{and}~\overline{f}(\bar{x})\leq\liminf_{x\rightarrow \bar{x}}\overline{f}(x)\\&\iff& \left[\underline{f}(\bar{x}),\overline{f}(\bar{x})\right]\preceq\left[\liminf_{x\rightarrow \bar{x}}\underline{f}(x),\liminf_{x\rightarrow \bar{x}}\overline{f}(x)\right]\\&\iff&\left[\underline{f}(\bar{x}),\overline{f}(\bar{x})\right]\preceq\liminf_{x\rightarrow \bar{x}}\left[\underline{f}(x),\overline{f}(x)\right],~\text{by Remark \ref{g54}}\\&\text{i.e.,}& \textbf{F}(\bar{x})\preceq\liminf_{x\rightarrow \bar{x}}\textbf{F}(x).
		\end{eqnarray*}	
	\end{note}
	Note \ref{g75} reduces our efforts to check $gH$-lower semicontinuity of extended IVFs that are given in the form  $\textbf{F}(x)=\left[\underline{f}(x),\overline{f}(x)\right]$. For example, consider $\textbf{F}:{\mathbb{R}}^2\rightarrow I(\mathbb{R})$ as
	$$\textbf{F}(x_1,x_2)=
		\begin{cases*}
		\left[\tfrac{\lvert  x_1x_2\rvert}{{2x_1}^2+{x_2}^2},\tfrac{e^{\lvert 6x_1x_2\rvert}}{{x_1}^2+{x_2}^2}\right]& \text{if}~ $x_1x_2\neq 0$\\
		\textbf{0}& \text{if}~ $x_1x_2= 0$
		\end{cases*}$$
and take $\bar{x}=(0,0)$. It is easy to see that both
	$$\underline{f}(x_1,x_2)=	\begin{cases*}
		\tfrac{\lvert  x_1x_2\rvert}{2{x_1}^2+{x_2}^2}& \text{if}~ $x_1x_2\neq 0$\\
		0& \text{if}~ $x_1x_2= 0$
		\end{cases*}$$
		and	$$\overline{f}(x_1,x_2)=	\begin{cases*}
		\tfrac{e^{\lvert 6x_1x_2\rvert}}{{x_1}^2+{x_2}^2}& \text{if}~ $x_1x_2\neq 0$\\
		0& \text{if}~ $x_1x_2= 0$
		\end{cases*}$$
		are lsc at $\bar{x}$. Thus, by Note \ref{g75}, $\textbf{F}$ is $gH$-lsc at $\bar{x}$.
	\begin{theorem}\label{g48}
		Let $\textbf{F}$ be an extended IVF. Then, $\textbf{F}$ is $gH$-lsc at $\bar{x}\in\mathcal{X}$ if and only if for each $\epsilon>0$, there exists a $\delta>0$ such that $\textbf{F}(\bar{x})\ominus_{gH}[\epsilon,\epsilon]\prec\textbf{F}(x)~\text{for all}~ x\in B_{\delta}(\bar{x}).$
	\end{theorem}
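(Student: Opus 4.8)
The plan is to reduce the statement to the classical $\epsilon$-$\delta$ characterization of lower semicontinuity for extended real-valued functions, using the componentwise description of $gH$-lower semicontinuity recorded in Note \ref{g75}. Writing $\textbf{F}(x)=\left[\underline{f}(x),\overline{f}(x)\right]$, I would first observe that since $\underline{f}(\bar{x})\leq\overline{f}(\bar{x})$, the $gH$-difference with $[\epsilon,\epsilon]$ reduces to a plain downward shift,
\[\textbf{F}(\bar{x})\ominus_{gH}[\epsilon,\epsilon]=\left[\underline{f}(\bar{x})-\epsilon,~\overline{f}(\bar{x})-\epsilon\right].\]
Hence the target condition ``$\textbf{F}(\bar{x})\ominus_{gH}[\epsilon,\epsilon]\prec\textbf{F}(x)$ for all $x\in B_\delta(\bar{x})$'' is really an assertion about the two endpoint functions $\underline{f}$ and $\overline{f}$, to be decoded through the strict dominance of Definition \ref{g99}.

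For the forward implication, I would assume $\textbf{F}$ is $gH$-lsc at $\bar{x}$, so by Note \ref{g75} both $\underline{f}$ and $\overline{f}$ are lsc at $\bar{x}$. The classical characterization of real lower semicontinuity then yields, for a given $\epsilon>0$, radii $\delta_1,\delta_2>0$ with $\underline{f}(\bar{x})-\epsilon<\underline{f}(x)$ on $B_{\delta_1}(\bar{x})$ and $\overline{f}(\bar{x})-\epsilon<\overline{f}(x)$ on $B_{\delta_2}(\bar{x})$. Setting $\delta=\min\{\delta_1,\delta_2\}$, both strict endpoint inequalities hold simultaneously on $B_\delta(\bar{x})$; this is precisely Case 3 of Definition \ref{g99}(ii), whence $\textbf{F}(\bar{x})\ominus_{gH}[\epsilon,\epsilon]\prec\textbf{F}(x)$ there.

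For the converse, I would start from the hypothesis $\textbf{F}(\bar{x})\ominus_{gH}[\epsilon,\epsilon]\prec\textbf{F}(x)$ on $B_\delta(\bar{x})$ and extract only the non-strict dominance $\preceq$ that it implies, namely $\underline{f}(\bar{x})-\epsilon\leq\underline{f}(x)$ and $\overline{f}(\bar{x})-\epsilon\leq\overline{f}(x)$ for all such $x$. Taking the infimum over $B_\delta(\bar{x})$, then the supremum over $\delta$, and letting $\epsilon\downarrow 0$ gives $\liminf_{x\to\bar{x}}\underline{f}(x)\geq\underline{f}(\bar{x})$ and the analogous bound for $\overline{f}$, so both endpoint functions are lsc at $\bar{x}$; Note \ref{g75} then returns $gH$-lower semicontinuity of $\textbf{F}$.

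The step requiring the most care is the mismatch between the two notions of strictness. Strict interval dominance $\prec$ permits one endpoint inequality to remain non-strict (Cases 1 and 2), so from the hypothesis one can harvest only the non-strict bounds $\preceq$; these are nonetheless enough to drive the lsc argument, since lower semicontinuity asks only for $\liminf\succeq\textbf{F}(\bar{x})$ up to arbitrarily small $\epsilon$. Conversely, genuine real lsc supplies strict bounds on \emph{both} endpoints, i.e.\ the stronger Case 3, which is what guarantees $\prec$. I would also flag the mild caveat that the expression $\textbf{F}(\bar{x})\ominus_{gH}[\epsilon,\epsilon]$ presumes $\textbf{F}(\bar{x})\in I(\mathbb{R})$; the degenerate values $\textbf{F}(\bar{x})=\pm\infty$ are either vacuous or immediate and can be dispatched by the convention for the extended ordering.
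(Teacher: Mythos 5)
Your proof is correct, but it follows a genuinely different route from the paper's. The paper argues intrinsically in the interval order, never decomposing $\textbf{F}$ into its endpoint functions: for the forward direction it assumes the conclusion fails for some $\epsilon_0$, propagates the relation $\textbf{F}(\bar{x})\ominus_{gH}[\epsilon_0,\epsilon_0]\nprec\textbf{F}(x)$ from individual points to the infimum $\inf\{\textbf{F}(x):x\in B_{\delta}(\bar{x})\}$, passes it through the limit $\delta\downarrow 0$ to reach $\liminf_{x\rightarrow\bar{x}}\textbf{F}(x)$, and then invokes part (\ref{8}) of Lemma \ref{g28} to contradict $gH$-lower semicontinuity; its converse passes $\preceq$ through the same infima and lets $\epsilon\downarrow 0$, which is essentially the scalar-free analogue of your converse. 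Your reduction via Note \ref{g75} to the classical $\epsilon$-$\delta$ characterization of real lower semicontinuity buys two things. First, elementarity: all order bookkeeping is delegated to Definition \ref{g99}, and you make explicit exactly where strictness is gained and lost (real lsc yields Case 3 of $\prec$ in the forward direction, while in the converse only the non-strict $\preceq$ survives, which suffices). Second, and more substantively, your argument entirely avoids the paper's most delicate step, namely passing $\nprec$ through the limit $\delta\downarrow 0$: the relation $\nprec$ is not closed under limits (a strict endpoint inequality such as $\underline{a}>\underline{b}_{\delta}$ can collapse to equality as the monotonically increasing infima converge), so that step in the paper's forward proof requires more justification than it receives, whereas your componentwise argument never needs it. What the paper's approach buys in exchange is uniformity: it does not presuppose an endpoint representation $\textbf{F}(x)=[\underline{f}(x),\overline{f}(x)]$ of extended values and works directly from Definition \ref{g5}. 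Both proofs share the implicit assumption $\textbf{F}(\bar{x})\in I(\mathbb{R})$ needed for $\textbf{F}(\bar{x})\ominus_{gH}[\epsilon,\epsilon]$ to be defined, which you appropriately flag and the paper leaves tacit.
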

	\begin{proof}
		Let $\textbf{F}$ be $gH$-lsc at $\bar{x}$.\\
		
		To the contrary, suppose there exists an $\epsilon_{0}>0$ such that for all $\delta>0$, $\textbf{F}(\bar{x})\ominus_{gH}[\epsilon_{0},\epsilon_{0}]\nprec\textbf{F}(x)$ for atleast one $x$ in  $B_{\delta}(\bar{x})$.\\
		
		Then,
		\begin{eqnarray*}
		&&\textbf{F}(\bar{x})\ominus_{gH}[\epsilon_{0},\epsilon_{0}]\nprec\inf\{\textbf{F}(x):x\in B_{\delta}(\bar{x})\}~\text{for all $\delta>0$}\\&\implies& \textbf{F}(\bar{x})\ominus_{gH}[\epsilon_{0},\epsilon_{0}]\nprec\lim\limits_{\delta\downarrow 0}(\inf\{\textbf{F}(x):x\in B_{\delta}(\bar{x})\})\\&\implies& \textbf{F}(\bar{x})\ominus_{gH}[\epsilon_{0},\epsilon_{0}]\nprec \liminf\limits_{x\rightarrow \bar{x}}\textbf{F}(x)\\&\implies& \textbf{F}(\bar{x})\npreceq \liminf\limits_{x\rightarrow \bar{x}}\textbf{F}(x),  ~\text{by (\ref{8}) of Lemma \ref{g28}},
		\end{eqnarray*}
	which contradicts that $\textbf{F}$ is $gH$-lsc at $\bar{x}$. Thus, for each $\epsilon>0$, there exists a $\delta>0$ such that $\textbf{F}(\bar{x})\ominus_{gH}[\epsilon,\epsilon]\prec\textbf{F}(x)~\text{for all}~ x\in B_{\delta}(\bar{x}).$ \\
		
		Conversely, suppose for a given $\epsilon>0$, there exists a $\delta>0$ such that $\textbf{F}(\bar{x})\ominus_{gH}[\epsilon,\epsilon]\prec\textbf{F}(x)~\text{for all}~ x\in B_{\delta}(\bar{x})$. Then, 		
		\begin{eqnarray*}
		&&\textbf{F}(\bar{x})\ominus_{gH}[\epsilon,\epsilon]\preceq\inf\{\textbf{F}(x):x\in B_{\delta}(\bar{x})\}\\
		&\implies&\textbf{F}(\bar{x})\ominus_{gH}[\epsilon,\epsilon]\preceq\lim\limits_{\delta\downarrow 0}(\inf\{\textbf{F}(x):x\in B_{\delta}(\bar{x})\})\\
		&\implies&\textbf{F}(\bar{x})\ominus_{gH}[\epsilon,\epsilon]\preceq \liminf\limits_{x\rightarrow \bar{x}}\textbf{F}(x).
		\end{eqnarray*}
		As, $\textbf{F}(\bar{x})\ominus_{gH}[\epsilon,\epsilon]\preceq \liminf\limits_{x\rightarrow \bar{x}}\textbf{F}(x)$ for every $\epsilon>0$, we have $\textbf{F}(\bar{x})\preceq \liminf\limits_{x\rightarrow \bar{x}}\textbf{F}(x)$. Thus, $\textbf{F}~\text{ is}~ gH\text{-lsc at}~ \bar{x}.$
	\end{proof}
	\begin{definition}\label{g8}(\emph{Upper limit and $gH$-upper semicontinuity of an extended IVF}). 
		The upper limit of an extended IVF $\textbf{F}$ at $\bar{x}\in\mathcal{X}$, denoted  $\limsup\limits_{x\rightarrow \bar{x}}\textbf{F}(x)$, is defined as
		\begin{eqnarray*}\label{g9}
			\limsup_{x\rightarrow \bar{x}}\textbf{F}(x)&=&\lim\limits_{\delta\downarrow 0}\left(\sup\{\textbf{F}(x):x\in B_{\delta}(\bar{x})\}\right)\nonumber\\&=&\inf_{\delta>0}\left(\sup\{\textbf{F}(x):x\in B_{\delta}(\bar{x})\}\right).
		\end{eqnarray*}
		$\textbf{F}$ is called $gH$-upper semicontinuous ($gH$-usc) at $\bar{x}$ if
		\begin{equation}\label{g10}
		\limsup_{x\rightarrow \bar{x}}\textbf{F}(x)\preceq\textbf{F}(\bar{x}).
		\end{equation}
		Further, \textbf{F} is called $gH$-usc on $\mathcal{X}$ if (\ref{g10}) holds for every $\bar{x}\in \mathcal{X}$.
	\end{definition}

	\begin{note}
		Let $\textbf{F}$ be an extended IVF with $\textbf{F}(x)=\left[\underline{f}(x),\overline{f}(x)\right],$ where $\underline{f},~\overline{f}:\mathcal{X}\rightarrow\mathbb{R}\cup\{-\infty,+\infty\}$ be two extended real-valued functions. Then, because of a similar reason as in Note \ref{g75}, $\textbf{F}$ is $gH$-usc at $\bar{x}\in\mathcal{X}$ if and only if $\underline{f}$ and $\overline{f}$ are usc at $\bar{x}$.
	\end{note}
	\begin{theorem}\label{g49}
		Let $\textbf{F}$ be an extended IVF. Then, $\textbf{F}$ is $gH$-usc at $\bar{x}\in\mathcal{X}$ if and only if for each $\epsilon>0$, there exists a $\delta>0$ such that $\textbf{F}(x)\prec\textbf{F}(\bar{x})\oplus[\epsilon,\epsilon]~\text{for all}~ x\in B_{\delta}(\bar{x}).$
	\end{theorem}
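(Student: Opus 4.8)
The plan is to prove Theorem~\ref{g49} as the exact upper-semicontinuity dual of Theorem~\ref{g48}, obtained by replacing $\liminf$, $\inf$, and $\ominus_{gH}[\epsilon,\epsilon]$ throughout by $\limsup$, $\sup$, and $\oplus[\epsilon,\epsilon]$, and by reversing the roles of the two sides of each dominance relation. The only tool that is not literally available is the dual of item~(\ref{8}) of Lemma~\ref{g28}. I would therefore first record the elementary implication
\[
\textbf{B}\preceq\textbf{A}\implies\textbf{B}\prec\textbf{A}\oplus[\epsilon,\epsilon],\quad\text{equivalently}\quad \textbf{B}\nprec\textbf{A}\oplus[\epsilon,\epsilon]\implies\textbf{B}\npreceq\textbf{A},
\]
which is immediate from Definition~\ref{g99}: if $\underline{b}\leq\underline{a}$ and $\overline{b}\leq\overline{a}$, then $\underline{b}<\underline{a}+\epsilon$ and $\overline{b}<\overline{a}+\epsilon$, so Case~3 of strict dominance holds. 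Throughout I write $\textbf{S}_{\delta}=\sup\{\textbf{F}(x):x\in B_{\delta}(\bar{x})\}$, so that by Definition~\ref{g8} we have $\limsup_{x\rightarrow\bar{x}}\textbf{F}(x)=\inf_{\delta>0}\textbf{S}_{\delta}$, and $\textbf{S}_{\delta}$ decreases to this limit as $\delta\downarrow 0$.

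For sufficiency, I would suppose the $\epsilon$--$\delta$ condition holds. Fixing $\epsilon>0$ and choosing the associated $\delta>0$, the interval $\textbf{F}(\bar{x})\oplus[\epsilon,\epsilon]$ is an upper bound of $\{\textbf{F}(x):x\in B_{\delta}(\bar{x})\}$, since $\textbf{F}(x)\prec\textbf{F}(\bar{x})\oplus[\epsilon,\epsilon]$ implies $\textbf{F}(x)\preceq\textbf{F}(\bar{x})\oplus[\epsilon,\epsilon]$; hence $\textbf{S}_{\delta}\preceq\textbf{F}(\bar{x})\oplus[\epsilon,\epsilon]$ because $\textbf{S}_{\delta}$ is the least upper bound. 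Consequently $\limsup_{x\rightarrow\bar{x}}\textbf{F}(x)\preceq\textbf{S}_{\delta}\preceq\textbf{F}(\bar{x})\oplus[\epsilon,\epsilon]$. As this holds for every $\epsilon>0$, passing $\epsilon\downarrow 0$ in each endpoint yields $\limsup_{x\rightarrow\bar{x}}\textbf{F}(x)\preceq\textbf{F}(\bar{x})$, i.e. $\textbf{F}$ is $gH$-usc at $\bar{x}$.

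For necessity, I would assume $\textbf{F}$ is $gH$-usc at $\bar{x}$, so $\inf_{\delta>0}\textbf{S}_{\delta}=\limsup_{x\rightarrow\bar{x}}\textbf{F}(x)\preceq\textbf{F}(\bar{x})$. Fixing $\epsilon>0$ and applying the implication recorded above, $\limsup_{x\rightarrow\bar{x}}\textbf{F}(x)\prec\textbf{F}(\bar{x})\oplus[\epsilon,\epsilon]$, and this is a \emph{strong} (Case~3) strict dominance: both endpoints of the left-hand interval lie strictly below the corresponding endpoints of $\textbf{F}(\bar{x})\oplus[\epsilon,\epsilon]$. Since the endpoints of $\textbf{S}_{\delta}$ decrease to those of $\limsup_{x\rightarrow\bar{x}}\textbf{F}(x)$ as $\delta\downarrow 0$, there is a $\delta>0$ for which both endpoints of $\textbf{S}_{\delta}$ still lie strictly below the corresponding endpoints of $\textbf{F}(\bar{x})\oplus[\epsilon,\epsilon]$, giving $\textbf{S}_{\delta}\prec\textbf{F}(\bar{x})\oplus[\epsilon,\epsilon]$. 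For every $x\in B_{\delta}(\bar{x})$ we then have $\textbf{F}(x)\preceq\textbf{S}_{\delta}\prec\textbf{F}(\bar{x})\oplus[\epsilon,\epsilon]$, whence $\textbf{F}(x)\prec\textbf{F}(\bar{x})\oplus[\epsilon,\epsilon]$, which is exactly the required condition.

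The step I expect to be most delicate is the passage to the limit in the necessity part: a strict dominance enjoyed by the limit interval $\inf_{\delta>0}\textbf{S}_{\delta}$ need not, in general, survive for the approximating intervals $\textbf{S}_{\delta}$. The feature that makes it work here is that adding $[\epsilon,\epsilon]$ to a $\preceq$-relation produces the strong Case~3 strict dominance (strict in \emph{both} endpoints), and strictness in both coordinates is stable under the componentwise monotone convergence $\textbf{S}_{\delta}\rightarrow\limsup_{x\rightarrow\bar{x}}\textbf{F}(x)$; this is precisely why the $\oplus[\epsilon,\epsilon]$ buffer appears in the statement. Accordingly, I would carry out this endpoint argument explicitly, using the componentwise description of $\sup$ and $\inf$ from Remark~\ref{g54}, rather than try to pass $\prec$ through the limit abstractly.
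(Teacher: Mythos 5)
Your proof is correct, but it takes a genuinely different route from the paper's in the necessity direction. The paper disposes of Theorem \ref{g49} with the single line ``Similar to the proof of Theorem \ref{g48},'' i.e., its intended argument is the literal dualization of that proof: necessity by \emph{contradiction}, assuming the $\epsilon$--$\delta$ condition fails for some $\epsilon_0$, transferring $\nprec$ from individual values $\textbf{F}(x)$ to $\sup\{\textbf{F}(x):x\in B_{\delta}(\bar{x})\}$ and then through the limit $\delta\downarrow 0$, and finally invoking the $\oplus$-analogue of implication (\ref{8}) of Lemma \ref{g28}, namely $\textbf{A}\nprec\textbf{B}\oplus[\epsilon,\epsilon]\implies\textbf{A}\npreceq\textbf{B}$, to contradict $gH$-upper semicontinuity. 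You instead argue necessity \emph{directly}: you first record the elementary dual implication $\textbf{B}\preceq\textbf{A}\implies\textbf{B}\prec\textbf{A}\oplus[\epsilon,\epsilon]$ --- correctly observing that Lemma \ref{g28} only states the $\ominus_{gH}$ version, so this really does need a separate (one-line) proof; its contrapositive is exactly what the paper's dualized contradiction argument would also require --- and then exploit that this dominance is strict in \emph{both} endpoints (Case 3 of Definition \ref{g99}), so that by the componentwise description of $\sup$ and $\inf$ in Remark \ref{g54} and the monotone decrease of the endpoints of $\textbf{S}_{\delta}$ as $\delta\downarrow 0$, a single $\delta$ (the smaller of the two obtained coordinatewise) yields $\textbf{S}_{\delta}\prec\textbf{F}(\bar{x})\oplus[\epsilon,\epsilon]$ and hence $\textbf{F}(x)\prec\textbf{F}(\bar{x})\oplus[\epsilon,\epsilon]$ on $B_{\delta}(\bar{x})$. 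What your version buys is precision at exactly the spot you flag as delicate: the relation $\nprec$ includes the ``not comparable'' alternative and is \emph{not} in general preserved under monotone componentwise limits (e.g., $[\delta',1]\nprec[0,2]$ for every $\delta'>0$ yet $[0,1]\prec[0,2]$), so the paper's template step of pushing $\nprec$ through $\lim_{\delta\downarrow 0}$ is glossed, whereas your two-endpoint-strict formulation is manifestly stable under the convergence $\textbf{S}_{\delta}\rightarrow\limsup_{x\rightarrow\bar{x}}\textbf{F}(x)$; what the paper's template buys is brevity and symmetry with Theorem \ref{g48}. Your sufficiency half coincides with the paper's (direct upper-bound argument followed by $\epsilon\downarrow 0$ in each endpoint), and both treatments equally gloss the conventions needed when $\textbf{F}$ takes the value $+\infty$.
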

	\begin{proof}
	Similar to the proof of Theorem \ref{g48}.
	\end{proof}
	\begin{theorem}\label{g44} 
		An extended IVF $\textbf{F}$ is $gH$-continuous if and only if $\textbf{F}$ is both $gH$-lower and upper semicontinuous.
	\end{theorem}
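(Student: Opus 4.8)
The plan is to observe that the single tool doing all the work here is part \ref{6} of Lemma \ref{g28}, which states that for intervals $\textbf{A},\textbf{B}$ and $\epsilon>0$,
\[
\lVert \textbf{A}\ominus_{gH}\textbf{B}\rVert_{I(\mathbb{R})}<\epsilon \iff \textbf{B}\ominus_{gH}[\epsilon,\epsilon]\prec\textbf{A}\prec\textbf{B}\oplus[\epsilon,\epsilon].
\]
Specializing to $\textbf{A}=\textbf{F}(x)$ and $\textbf{B}=\textbf{F}(\bar{x})$, the norm condition defining $gH$-continuity becomes precisely the two-sided chain $\textbf{F}(\bar{x})\ominus_{gH}[\epsilon,\epsilon]\prec\textbf{F}(x)\prec\textbf{F}(\bar{x})\oplus[\epsilon,\epsilon]$, whose left half is exactly the characterization of $gH$-lower semicontinuity from Theorem \ref{g48} and whose right half is exactly the characterization of $gH$-upper semicontinuity from Theorem \ref{g49}. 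So the theorem is really just the statement that an equivalence which is ``two-sided'' factors into its two one-sided halves, and the whole argument can be done pointwise at an arbitrary $\bar{x}\in\mathcal{X}$.

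For the forward direction I would assume $\textbf{F}$ is $gH$-continuous at $\bar{x}$, fix $\epsilon>0$, and take the $\delta>0$ from the continuity definition so that $\lVert \textbf{F}(x)\ominus_{gH}\textbf{F}(\bar{x})\rVert_{I(\mathbb{R})}<\epsilon$ for all $x\in B_{\delta}(\bar{x})$. Applying Lemma \ref{g28}(\ref{6}) converts this into $\textbf{F}(\bar{x})\ominus_{gH}[\epsilon,\epsilon]\prec\textbf{F}(x)\prec\textbf{F}(\bar{x})\oplus[\epsilon,\epsilon]$ on $B_{\delta}(\bar{x})$. Reading off the left inequality and invoking Theorem \ref{g48} gives $gH$-lsc at $\bar{x}$, while the right inequality together with Theorem \ref{g49} gives $gH$-usc at $\bar{x}$. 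Since $\bar{x}$ was arbitrary, $\textbf{F}$ is both $gH$-lsc and $gH$-usc on $\mathcal{X}$.

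For the converse I would assume $\textbf{F}$ is both $gH$-lsc and $gH$-usc at $\bar{x}$ and fix $\epsilon>0$. Theorem \ref{g48} supplies a $\delta_1>0$ with $\textbf{F}(\bar{x})\ominus_{gH}[\epsilon,\epsilon]\prec\textbf{F}(x)$ on $B_{\delta_1}(\bar{x})$, and Theorem \ref{g49} supplies a $\delta_2>0$ with $\textbf{F}(x)\prec\textbf{F}(\bar{x})\oplus[\epsilon,\epsilon]$ on $B_{\delta_2}(\bar{x})$. The only genuinely ``active'' step is to set $\delta=\min\{\delta_1,\delta_2\}$ so that both dominance relations hold simultaneously on $B_{\delta}(\bar{x})$; the chain $\textbf{F}(\bar{x})\ominus_{gH}[\epsilon,\epsilon]\prec\textbf{F}(x)\prec\textbf{F}(\bar{x})\oplus[\epsilon,\epsilon]$ then feeds back through Lemma \ref{g28}(\ref{6}) to yield $\lVert \textbf{F}(x)\ominus_{gH}\textbf{F}(\bar{x})\rVert_{I(\mathbb{R})}<\epsilon$ on $B_{\delta}(\bar{x})$, which is $gH$-continuity at $\bar{x}$. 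I do not anticipate any real obstacle: the proof is essentially a bookkeeping exercise in which Lemma \ref{g28}(\ref{6}) is the hinge, and the only point requiring the slightest care is taking the minimum of the two radii in the converse so that a single $\delta$ works for both one-sided estimates.
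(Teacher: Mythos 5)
Your proposal is correct and follows essentially the same route as the paper: the paper's proof is exactly the chain through Lemma \ref{g28}(\ref{6}) combined with the characterizations in Theorems \ref{g48} and \ref{g49}. If anything, your version is slightly more careful, since the paper compresses everything into a single ``iff'' chain and silently elides the $\delta=\min\{\delta_1,\delta_2\}$ step that you make explicit in the converse direction.
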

	\begin{proof}
		Let $\textbf{F}$ be $gH$-continuous at $\bar{x}\in\mathcal{X}.$ Then, for each $\epsilon>0$, there exists a $\delta>0$ such that
		\begin{eqnarray*}
		&&\lVert\textbf{F}(x)\ominus_{gH}\textbf{F}(\bar{x})\rVert_{I(\mathbb{R})}<\epsilon~\text{for all}~ x\in B_{\delta}(\bar{x})\\
		&\iff& \textbf{F}(\bar{x})\ominus_{gH}[\epsilon,\epsilon]\prec\textbf{F}(x)\prec\textbf{F}(\bar{x})\oplus[\epsilon,\epsilon]~\text{for all}~ x\in B_{\delta}(\bar{x}),~\text{by ({\ref{6}}) of Lemma \ref{g28}}\\
		&\iff& \textbf{F}(\bar{x})~ \text{is}~ gH\text{-lsc and}~ gH\text{-usc at}~ \bar{x},~\text{by Theorems \ref{g48} and \ref{g49}.}
		\end{eqnarray*}
	\end{proof}
	\begin{definition}(\emph{Proper IVF}). An extended IVF $\textbf{F}$ is called a proper function if there exists an $\bar x\in\mathcal{X}$ such that $\textbf{F}(\bar x)\prec [+\infty,+\infty]$ and $[-\infty,-\infty]\prec\textbf{F}(x)~\text{for all}~ x\in\mathcal{X}.$
	\end{definition}
	\begin{example}
		Consider the IVF $\textbf{F}:{\mathbb{R}}^2\rightarrow \overline{I(\mathbb{R})}$ that is given by  $\textbf{F}(x_1,x_2)=\left[x_1,{e}^{x_1}+{x_2}^2\right].$\\
		Note that $\textbf{F}(0,0)=[0,1]\prec [+\infty,+\infty]$. Also,   $[-\infty,-\infty]\prec\textbf{F}(x_1,x_2)~\text{for all}~ (x_1,x_2)\in {\mathbb{R}}^2.$
		Therefore, $\textbf{F}$ is a proper function.
	\end{example}
	\begin{lemma}\label{g79}
		Let $\mathbf{F}_1$ and $\mathbf{F}_2$ be two proper extended IVFs, and $\mathcal{S}$ be a nonempty subset of $\mathcal{X}$. Then,
		\begin{enumerate}[(i)]
			\item\label{g78} $\inf\limits_{x\in\mathcal{S}}\mathbf{F}_1(x)\oplus\inf\limits_{x\in\mathcal{S}}\mathbf{F}_2(x)\preceq\inf\limits_{x\in\mathcal{S}}\{\mathbf{F}_1(x)\oplus\mathbf{F}_2(x)\}$ and
			\item\label{g77} $\sup\limits_{x\in\mathcal{S}}\{\mathbf{F}_1(x)\oplus\mathbf{F}_2(x)\}\preceq\sup\limits_{x\in\mathcal{S}}\mathbf{F}_1(x)\oplus\sup\limits_{x\in\mathcal{S}}\mathbf{F}_2(x).$
		\end{enumerate}
	\end{lemma}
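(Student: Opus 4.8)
The plan is to reduce both interval inequalities to familiar scalar ones by passing to endpoints. I would write $\mathbf{F}_1(x)=[\underline{f}_1(x),\overline{f}_1(x)]$ and $\mathbf{F}_2(x)=[\underline{f}_2(x),\overline{f}_2(x)]$, where the four endpoint functions take values in $\mathbb{R}\cup\{-\infty,+\infty\}$. Since both IVFs are proper, $[-\infty,-\infty]\prec\mathbf{F}_i(x)$ for every $x$, so none of the endpoint functions ever attains $-\infty$. This is precisely the role that properness plays: it rules out the indeterminate form $(+\infty)+(-\infty)$ when endpoints are added, so each pointwise sum $\underline{f}_1(x)+\underline{f}_2(x)$ and $\overline{f}_1(x)+\overline{f}_2(x)$ stays well defined in $(-\infty,+\infty]$, and the extended-real infimum/supremum inequalities below remain valid even when an endpoint function is unbounded.

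For part \eqref{g78}, I would first invoke Remark \ref{g54} to evaluate each infimum componentwise, giving $\inf_{x\in\mathcal{S}}\mathbf{F}_i(x)=[\inf_{x\in\mathcal{S}}\underline{f}_i(x),\,\inf_{x\in\mathcal{S}}\overline{f}_i(x)]$. Adding the two intervals with $\oplus$ produces the interval with endpoints $\inf_x\underline{f}_1+\inf_x\underline{f}_2$ and $\inf_x\overline{f}_1+\inf_x\overline{f}_2$. On the other side, $\mathbf{F}_1(x)\oplus\mathbf{F}_2(x)=[\underline{f}_1(x)+\underline{f}_2(x),\,\overline{f}_1(x)+\overline{f}_2(x)]$, so Remark \ref{g54} again yields that $\inf_{x\in\mathcal{S}}\{\mathbf{F}_1(x)\oplus\mathbf{F}_2(x)\}$ has endpoints $\inf_x(\underline{f}_1+\underline{f}_2)$ and $\inf_x(\overline{f}_1+\overline{f}_2)$.

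The crux is then the classical scalar fact $\inf_x g+\inf_x h\le\inf_x(g+h)$, applied separately to the lower pair $(g,h)=(\underline{f}_1,\underline{f}_2)$ and to the upper pair $(\overline{f}_1,\overline{f}_2)$. Its one-line justification, which I would record for completeness: for every fixed $x$ one has $\inf_y g(y)\le g(x)$ and $\inf_y h(y)\le h(x)$, hence their sum is at most $g(x)+h(x)$; being a lower bound of $\{g(x)+h(x):x\in\mathcal{S}\}$, it cannot exceed the infimum. With both endpoint inequalities in hand, the componentwise definition of $\preceq$ (Definition \ref{g99}) delivers exactly $\inf_{x\in\mathcal{S}}\mathbf{F}_1(x)\oplus\inf_{x\in\mathcal{S}}\mathbf{F}_2(x)\preceq\inf_{x\in\mathcal{S}}\{\mathbf{F}_1(x)\oplus\mathbf{F}_2(x)\}$.

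Part \eqref{g77} is entirely dual: I would replace every infimum by a supremum, use the companion scalar inequality $\sup_x(g+h)\le\sup_x g+\sup_x h$ (proved the same way, now bounding each term from above), and conclude again through Definition \ref{g99}. I do not anticipate a genuine obstacle here; the mathematical content is just the pair of scalar inequalities, and the only point deserving care is the verification, via properness, that no $\infty-\infty$ situation can arise when an endpoint function is unbounded. Once that observation is in place, the rest is routine endpoint bookkeeping.
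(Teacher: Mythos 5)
Your proof is correct, but it runs along a different track from the paper's. The paper never decomposes into endpoints here: it sets $\boldsymbol{\alpha}_i=\inf_{x\in\mathcal{S}}\mathbf{F}_i(x)$, notes $\boldsymbol{\alpha}_1\preceq\mathbf{F}_1(x)$ and $\boldsymbol{\alpha}_2\preceq\mathbf{F}_2(x)$ pointwise, adds these with the monotonicity of $\oplus$ under $\preceq$ (part (\ref{2}) of Lemma \ref{g23}), and concludes from Definition \ref{g11} that the greatest lower bound dominates $\boldsymbol{\alpha}_1\oplus\boldsymbol{\alpha}_2$ --- i.e., it lifts the classical one-line scalar argument (the very one you record as your ``justification for completeness'') directly to the interval level, with part (\ref{g77}) dual. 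Your route instead pushes everything down to endpoints via Remark \ref{g54}, applies the scalar inequality $\inf g+\inf h\le\inf(g+h)$ twice, and reassembles through Definition \ref{g99}. What the paper's version buys is brevity and abstraction: it uses only the order-theoretic ingredients (monotone $\oplus$, infimum as greatest lower bound) and would survive verbatim in any ordered structure with those properties. What your version buys is explicitness about extended values: the paper quietly applies Lemma \ref{g23}, which is stated only for elements of $I(\mathbb{R})$, even though a proper extended IVF may take the value $+\infty$, whereas your observation that properness excludes $-\infty$ endpoints and hence any $\infty-\infty$ form makes the extended-real bookkeeping airtight. So your proposal is not merely correct; on this one point it is slightly more careful than the paper's own proof.
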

	\begin{proof}
		Let $\boldsymbol{\alpha}_1=\inf\limits_{x\in\mathcal{S}}\mathbf{F}_1(x)$ and $\boldsymbol{\alpha}_2=\inf\limits_{x\in\mathcal{S}}\mathbf{F}_2(x)$. Then,
		\begin{eqnarray*}
			&&\boldsymbol{\alpha}_1\preceq \mathbf{F}_1(x)~\text{for all}~ x\in\mathcal{S} ~\text{and}~ \boldsymbol{\alpha}_2\preceq\mathbf{F}_2(x)~\text{for all}~ x\in\mathcal{S}\\&\implies& \boldsymbol{\alpha}_1\oplus\boldsymbol{\alpha}_2\preceq\mathbf{F}_1(x)\oplus\mathbf{F}_2(x)~\text{for all}~ x\in\mathcal{S},~\text{by (\ref{2}) of Lemma \ref{g23}}\\&\implies& \boldsymbol{\alpha}_1\oplus\boldsymbol{\alpha}_2\preceq\inf\limits_{x\in\mathcal{S}}(\mathbf{F}_1(x)\oplus\mathbf{F}_2(x))\\&\text{i.e.,}&\inf\limits_{x\in\mathcal{S}}\mathbf{F}_1(x)\oplus\inf\limits_{x\in\mathcal{S}}\mathbf{F}_2(x)\preceq\inf\limits_{x\in\mathcal{S}}\{\mathbf{F}_1(x)\oplus\mathbf{F}_2(x)\}.
		\end{eqnarray*}
		Part (\ref{g77}) can be similarly proved.
	\end{proof}
	\begin{theorem}\label{g82}
		Let $\mathbf{F}_1$ and $\mathbf{F}_2$ be two proper extended IVFs, and $\mathcal{S}$ be a nonempty subset of $\mathcal{X}$. Then,
		\begin{enumerate}[(i)]
			\item\label{g80} $	\liminf\limits_{x\rightarrow\bar{x}}\mathbf{F}_1(x)\oplus \liminf\limits_{x\rightarrow\bar{x}}\mathbf{F}_2(x)\preceq	 \liminf\limits_{x\rightarrow \bar{x}}(\mathbf{F}_1\oplus \mathbf{F}_2)(x)$ and
			\item \label{g81}$ \limsup\limits_{x\rightarrow \bar{x}}(\mathbf{F}_1\oplus \mathbf{F}_2)(x)\preceq\limsup\limits_{x\rightarrow\bar{x}}\mathbf{F}_1(x)\oplus \limsup\limits_{x\rightarrow\bar{x}}\mathbf{F}_2(x)$.
		\end{enumerate}
	\end{theorem}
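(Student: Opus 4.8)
The plan is to reduce both statements to the infimum/supremum inequalities already proved in Lemma \ref{g79}, by specializing the set $\mathcal{S}$ to the open balls $B_\delta(\bar x)$ and then passing to the limit $\delta\downarrow 0$. The passage to the limit is natural here because Definitions \ref{g5} and \ref{g8} express the lower and upper limits precisely as $\liminf_{x\to\bar x}\mathbf{F}(x)=\sup_{\delta>0}\inf_{x\in B_\delta(\bar x)}\mathbf{F}(x)$ and $\limsup_{x\to\bar x}\mathbf{F}(x)=\inf_{\delta>0}\sup_{x\in B_\delta(\bar x)}\mathbf{F}(x)$, so the outer $\delta$-limit is literally an outer supremum (resp. infimum) over $\delta$.

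For part (\ref{g80}), I would fix $\delta>0$ and apply Lemma \ref{g79}(\ref{g78}) with $\mathcal{S}=B_\delta(\bar x)$ to get
\[
\inf_{x\in B_\delta(\bar x)}\mathbf{F}_1(x)\oplus\inf_{x\in B_\delta(\bar x)}\mathbf{F}_2(x)\preceq\inf_{x\in B_\delta(\bar x)}\{\mathbf{F}_1(x)\oplus\mathbf{F}_2(x)\}.
\]
Since this holds for every $\delta>0$ and the supremum over $\delta$ is order-preserving (it acts coordinatewise by Remark \ref{g54}, so $\mathbf{U}_\delta\preceq\mathbf{V}_\delta$ for all $\delta$ gives $\sup_\delta\mathbf{U}_\delta\preceq\sup_\delta\mathbf{V}_\delta$), taking $\sup_{\delta>0}$ of both sides turns the right-hand side into $\liminf_{x\to\bar x}(\mathbf{F}_1\oplus\mathbf{F}_2)(x)$ by Definition \ref{g5}. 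It then remains to identify the supremum of the left-hand family with $\liminf_{x\to\bar x}\mathbf{F}_1(x)\oplus\liminf_{x\to\bar x}\mathbf{F}_2(x)$.

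That identification is the one step that needs care, and I expect it to be the main obstacle: the outer supremum over $\delta$ must be allowed to distribute across the binary operation $\oplus$, i.e.
\[
\sup_{\delta>0}\Big(\inf_{x\in B_\delta(\bar x)}\mathbf{F}_1(x)\oplus\inf_{x\in B_\delta(\bar x)}\mathbf{F}_2(x)\Big)=\liminf_{x\to\bar x}\mathbf{F}_1(x)\oplus\liminf_{x\to\bar x}\mathbf{F}_2(x).
\]
I would settle this using the endpoint representation of Remark \ref{g54}: writing $\mathbf{F}_i(x)=[\underline{f_i}(x),\overline{f_i}(x)]$, the interval $\inf$ and $\oplus$ act coordinatewise, and as $\delta\downarrow 0$ each family $\{\inf_{B_\delta(\bar x)}\mathbf{F}_i\}$ is monotonically increasing in $\preceq$, so the claim reduces to the elementary scalar fact that the supremum of the sum of two monotone increasing real families equals the sum of their suprema. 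Part (\ref{g81}) is entirely dual: fix $\delta>0$, apply Lemma \ref{g79}(\ref{g77}) on $B_\delta(\bar x)$, observe that $\{\sup_{B_\delta(\bar x)}\mathbf{F}_i\}$ is monotonically decreasing as $\delta\downarrow 0$ with infimum $\limsup_{x\to\bar x}\mathbf{F}_i(x)$ by Definition \ref{g8}, and take $\inf_{\delta>0}$, once more invoking Remark \ref{g54} to push the infimum through $\oplus$.
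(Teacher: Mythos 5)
Your proposal is correct and follows essentially the same route as the paper: both proofs apply Lemma \ref{g79} with $\mathcal{S}=B_{\delta}(\bar{x})$ and then pass to the $\delta$-limit using Definition \ref{g5} (resp.\ Definition \ref{g8}). In fact you are slightly more careful than the paper on the one delicate step, the distribution of the $\delta$-limit across $\oplus$, which the paper asserts as an unjustified inequality and which you justify via Remark \ref{g54} and monotonicity of the families $\inf_{B_{\delta}(\bar{x})}\mathbf{F}_i$ as $\delta\downarrow 0$.
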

	\begin{proof}
		\begin{eqnarray*}
			\liminf\limits_{x\rightarrow\bar{x}}\mathbf{F}_1(x)\oplus \liminf\limits_{x\rightarrow\bar{x}}\mathbf{F}_2(x)&=&\lim\limits_{\delta\downarrow 0}\inf\limits_{x\in {B_{\delta}(\bar{x})}}\mathbf{F}_1(x)\oplus\lim\limits_{\delta\downarrow 0}\inf\limits_{x\in {B_{\delta}(\bar{x})}}\mathbf{F}_2(x),~\text{by Definition \ref{g5}}\\&\preceq&\lim\limits_{\delta\downarrow 0}\left(\inf\limits_{x\in {B_{\delta}(\bar{x})}}\mathbf{F}_1(x)\oplus \inf\limits_{x\in {B_{\delta}(\bar{x})}}\mathbf{F}_2(x)\right)\\&\preceq&\lim\limits_{\delta\downarrow 0}\inf\limits_{x\in {B_{\delta}(\bar{x})}}(\mathbf{F}_1\oplus\mathbf{F}_2)(x),~\text{by (\ref{g78}) of Lemma \ref{g79}}\\&=& \liminf\limits_{x\rightarrow \bar{x}}(\mathbf{F}_1\oplus \mathbf{F}_2)(x).
		\end{eqnarray*}
		This completes the proof of (\ref{g80}).\\
		Part (\ref{g81}) can be similarly proved.
	\end{proof}
	\begin{theorem}\label{g83}
		Let $\mathbf{F}_1$ and $\mathbf{F}_2$ be two proper and $gH$-lsc extended IVFs. Then, $\mathbf{F}_1\oplus\mathbf{F}_2$ is $gH$-lsc.
	\end{theorem}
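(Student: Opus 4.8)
The plan is to reduce the statement directly to the sub-additivity of the lower limit established in Theorem \ref{g82} together with the monotonicity of $\oplus$ with respect to $\preceq$ from Lemma \ref{g23}. Fix an arbitrary $\bar{x}\in\mathcal{X}$; since being $gH$-lsc on $\mathcal{X}$ means being $gH$-lsc at every point, it suffices to verify the defining inequality (\ref{g7}) for $\mathbf{F}_1\oplus\mathbf{F}_2$ at this $\bar{x}$.

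First I would invoke the $gH$-lsc hypotheses on each summand separately: by Definition \ref{g5} we have $\mathbf{F}_1(\bar{x})\preceq\liminf\limits_{x\rightarrow\bar{x}}\mathbf{F}_1(x)$ and $\mathbf{F}_2(\bar{x})\preceq\liminf\limits_{x\rightarrow\bar{x}}\mathbf{F}_2(x)$. Adding these two dominance relations via part (\ref{2}) of Lemma \ref{g23} yields
\[
(\mathbf{F}_1\oplus\mathbf{F}_2)(\bar{x})=\mathbf{F}_1(\bar{x})\oplus\mathbf{F}_2(\bar{x})\preceq\liminf\limits_{x\rightarrow\bar{x}}\mathbf{F}_1(x)\oplus\liminf\limits_{x\rightarrow\bar{x}}\mathbf{F}_2(x).
\]
Next I would chain this with the key inequality from part (\ref{g80}) of Theorem \ref{g82}, namely $\liminf\limits_{x\rightarrow\bar{x}}\mathbf{F}_1(x)\oplus\liminf\limits_{x\rightarrow\bar{x}}\mathbf{F}_2(x)\preceq\liminf\limits_{x\rightarrow\bar{x}}(\mathbf{F}_1\oplus\mathbf{F}_2)(x)$. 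Since $\preceq$ is transitive (being defined coordinatewise through $\leq$ on the endpoints), combining the two relations gives $(\mathbf{F}_1\oplus\mathbf{F}_2)(\bar{x})\preceq\liminf\limits_{x\rightarrow\bar{x}}(\mathbf{F}_1\oplus\mathbf{F}_2)(x)$, which is exactly the $gH$-lsc condition at $\bar{x}$. As $\bar{x}$ was arbitrary, $\mathbf{F}_1\oplus\mathbf{F}_2$ is $gH$-lsc on $\mathcal{X}$.

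The proof carries essentially no obstacle of its own: all the analytic work---the interchange of the lower limit with the interval sum, which is precisely where the inequality may be strict---has already been absorbed into Theorem \ref{g82}, so what remains is only bookkeeping with monotonicity and transitivity. The two points worth keeping in mind are that the properness hypothesis guarantees every interval involved is finite, so that $\oplus$ is well defined and the quoted results apply, and that one must not expect equality at any stage, since $\liminf$ is only sub-additive under $\oplus$; fortunately, only the dominance direction $\preceq$ is needed to conclude.
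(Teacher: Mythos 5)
Your proposal is correct and follows essentially the same route as the paper's own proof: add the two $gH$-lsc inequalities using part (ii) of Lemma \ref{g23}, then chain with the sub-additivity of the lower limit from part (i) of Theorem \ref{g82}, and conclude by transitivity of $\preceq$ at an arbitrary $\bar{x}$. Your added remarks on properness and on the inequality being one-sided are sound but do not change the argument.
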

	\begin{proof}
		Take $\bar{x}\in\mathcal{X}$. Since $\mathbf{F}_1$ and $\mathbf{F}_2$ are $gH$-lsc at $\bar{x}$, we have
		\begin{eqnarray*}
			 &&\mathbf{F}_1(\bar{x})\preceq\liminf\limits_{x\rightarrow\bar{x}}\mathbf{F}_1(x)~\text{and}~\mathbf{F}_2(\bar{x})\preceq\liminf\limits_{x\rightarrow\bar{x}}\mathbf{F}_2(x)\\&\implies& \mathbf{F}_1(\bar{x})\oplus\mathbf{F}_2(\bar{x})\preceq\liminf\limits_{x\rightarrow\bar{x}}\mathbf{F}_1(x)\oplus\liminf\limits_{x\rightarrow\bar{x}}\mathbf{F}_2(x),~\text{by (\ref{2}) of Lemma \ref{g23}}\\&\implies&(\mathbf{F}_1\oplus\mathbf{F}_2)(\bar{x})\preceq\liminf\limits_{x\rightarrow\bar{x}}(\mathbf{F}_1\oplus\mathbf{F}_2)(x),~\text{by (\ref{g80}) of Theorem \ref{g82}}\\&\implies& \mathbf{F}_1\oplus\mathbf{F}_2~\text{is} ~gH\text{-lsc at}~\bar{x}.
		\end{eqnarray*}
		Since $\bar{x}$ is arbitrarily chosen, so $\mathbf{F}_1\oplus\mathbf{F}_2$ is $gH$-lsc on $\mathcal{X}$.
	\end{proof}
	\begin{lemma}\label{g20}(\emph{Characterization of lower limits of IVFs}). Let $\textbf{F}$ be an extended IVF. Then,
		\begin{eqnarray*}
			\liminf_{x\rightarrow \bar{x}}\textbf{F}(x)=\inf \left\{\boldsymbol{\alpha}\in\overline{I(\mathbb{R})}:\text{there exists a sequence}~ x_k\rightarrow \bar{x}~\text{with}~\textbf{F}(x_k)\rightarrow \boldsymbol{\alpha}\right\}.
		\end{eqnarray*}
	\end{lemma}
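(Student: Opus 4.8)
The plan is to reduce the interval identity to the classical real‑analysis characterization of the lower limit applied to the endpoint functions, and then to reassemble the two endpoints using Remark \ref{g54}. Writing $\textbf{F}(x)=[\underline{f}(x),\overline{f}(x)]$, the definition of the lower limit together with Remark \ref{g54} gives $\liminf_{x\rightarrow\bar{x}}\textbf{F}(x)=\left[\liminf_{x\rightarrow\bar{x}}\underline{f}(x),\liminf_{x\rightarrow\bar{x}}\overline{f}(x)\right]$, because each ball‑infimum $\inf\{\textbf{F}(x):x\in B_{\delta}(\bar{x})\}$ is computed componentwise and, as $\delta\downarrow 0$, increases monotonically (in $\preceq$) to this interval by Lemma \ref{p}. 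Thus the target reduces to matching each endpoint of $\inf\textbf{S}$ with the corresponding scalar lower limit, where $\textbf{S}=\{\boldsymbol{\alpha}\in\overline{I(\mathbb{R})}:\exists\,x_k\rightarrow\bar{x}\text{ with }\textbf{F}(x_k)\rightarrow\boldsymbol{\alpha}\}$.

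I would then establish the two inequalities separately. For $\liminf_{x\rightarrow\bar{x}}\textbf{F}(x)\preceq\inf\textbf{S}$, it suffices to verify that $\boldsymbol{\beta}:=\liminf_{x\rightarrow\bar{x}}\textbf{F}(x)$ is a lower bound of $\textbf{S}$. Given any $\boldsymbol{\alpha}=[\underline{\alpha},\overline{\alpha}]\in\textbf{S}$ with $\textbf{F}(x_k)\rightarrow\boldsymbol{\alpha}$, the componentwise characterization of convergence (the Note following Definition \ref{g22}) yields $\underline{f}(x_k)\rightarrow\underline{\alpha}$ and $\overline{f}(x_k)\rightarrow\overline{\alpha}$; since every limit of values along $x_k\rightarrow\bar{x}$ is at least the scalar lower limit, $\liminf\underline{f}\leq\underline{\alpha}$ and $\liminf\overline{f}\leq\overline{\alpha}$, i.e. $\boldsymbol{\beta}\preceq\boldsymbol{\alpha}$. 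By the definition of infimum (Definition \ref{g11}), $\boldsymbol{\beta}\preceq\inf\textbf{S}$.

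For the reverse inequality $\inf\textbf{S}\preceq\boldsymbol{\beta}$, I would, for each endpoint, aim to exhibit an element of $\textbf{S}$ whose corresponding endpoint approximates that of $\boldsymbol{\beta}$. Choose a sequence $y_k\rightarrow\bar{x}$ realizing $\underline{f}(y_k)\rightarrow\liminf\underline{f}$ (available from the scalar theory), then pass to a subsequence along which $\overline{f}$ also converges in $\overline{\mathbb{R}}$, using compactness of $\overline{\mathbb{R}}$; this should produce an element of $\textbf{S}$ with lower endpoint equal to $\liminf\underline{f}$, giving $\inf_{\boldsymbol{\alpha}\in\textbf{S}}\underline{\alpha}\leq\liminf\underline{f}$, and symmetrically for the upper endpoint. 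Reassembling the two endpoint estimates through Remark \ref{g54} would yield $\inf\textbf{S}\preceq\boldsymbol{\beta}$, completing the equality together with the first inequality.

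The main obstacle is precisely this last step: the two endpoints of a single $\boldsymbol{\alpha}\in\textbf{S}$ are coupled through one common sequence $x_k$, so realizing the two scalar lower limits \emph{simultaneously} along a single convergent sequence — rather than independently — is delicate, and the subsequence extraction must be organized (for instance by a diagonal argument across shrinking balls, selecting points where $\textbf{F}$ nearly attains the ball‑infimum of Lemma \ref{p}) so that the resulting limit genuinely lies in $\overline{I(\mathbb{R})}$ and not merely in the endpoint‑wise closure. Additional care is required when $\liminf\underline{f}$ or $\liminf\overline{f}$ is infinite, since there convergence of $\{\textbf{F}(x_k)\}$ must be read through the $\pm\infty$ clauses of Definition \ref{g22} rather than through a proper limiting interval.
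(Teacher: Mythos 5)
Your proposal is correct, and it takes a genuinely different route from the paper's proof. The paper argues directly with interval inequalities: your first inequality appears there in essentially the same form (bound the ball infimum $\inf\{\textbf{F}(x):x\in B_{\delta}(\bar{x})\}$ by $\textbf{F}(x_k)$ and let $\delta\downarrow 0$), but for the converse the paper proves more than you attempt, namely that $\bar{\boldsymbol{\alpha}}=\liminf_{x\rightarrow\bar{x}}\textbf{F}(x)$ is itself \emph{attained} as a subsequential limit, by selecting $x_k\in B_{\delta_k}(\bar{x})$ with $\bar{\boldsymbol{\alpha}}_k\preceq\textbf{F}(x_k)\preceq\boldsymbol{\alpha}_k$, $\boldsymbol{\alpha}_k\rightarrow\bar{\boldsymbol{\alpha}}$, and then invoking Lemma \ref{p}. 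That selection step is precisely the coupling problem you flag as your ``main obstacle,'' and for intervals it can fail: the infimum of a family of intervals is computed componentwise, so no single point need nearly attain both endpoint infima simultaneously. For instance, if arbitrarily close to $\bar{x}$ the function takes only the values $[0,1]$ and $[1/2,1/2]$, then $\bar{\boldsymbol{\alpha}}_k=[0,1/2]$, yet every value of $\textbf{F}$ is at distance $1/2$ from it in $\lVert\cdot\rVert_{I(\mathbb{R})}$, and $\textbf{F}(x)\preceq\boldsymbol{\alpha}_k$ is impossible for $\boldsymbol{\alpha}_k$ near $[0,1/2]$; here the lower limit is not a subsequential limit at all, though the stated equality still holds because the right-hand side is likewise a componentwise infimum. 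Your endpoint decomposition exploits exactly this: since $\inf\textbf{S}$ is computed endpoint by endpoint (Remark \ref{g54}), it suffices to exhibit, for each endpoint \emph{separately}, one element of $\textbf{S}$ whose corresponding endpoint equals the scalar lower limit, and your extraction (realize $\liminf\underline{f}$ along $y_k\rightarrow\bar{x}$, pass to a subsequence on which $\overline{f}$ converges in $\overline{\mathbb{R}}$, and symmetrically for the upper endpoint) supplies it — two different sequences are permitted, so the simultaneity you worried about is never needed, and the obstacle dissolves. In short, your argument is sound and is in fact tighter than the paper's at its weakest point; the one residual caveat, which you also correctly note and which the paper's own proof inherits, is that when one endpoint limit is infinite the subsequential limit can fall outside $\overline{I(\mathbb{R})}$ as convergence is defined in Definition \ref{g22} (there is no notion of a limit of the form $[c,+\infty]$), a gap in the framework rather than in your reasoning.
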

	
\begin{proof}
		Let $\bar{\boldsymbol{\alpha}}=\liminf\limits_{x\rightarrow \bar{x}}\textbf{F}(x).$ Assume that sequence $x_k\rightarrow \bar{x}$ with $\textbf{F}(x_k)\rightarrow\boldsymbol{\alpha}$. In the below, we show that $\bar{\boldsymbol{\alpha}}\preceq \boldsymbol{\alpha}.$\\
		
		Since $x_k\rightarrow \bar{x}$, for any $\delta>0$, there exists $k_{\delta}\in\mathbb{N}$ such that $x_k\in B_{\delta}(\bar{x})$ for every $k\geq k_{\delta}$.\\
		
		Therefore,
		\begin{eqnarray*}
		&&\inf\{\textbf{F}(x):x\in B_{\delta}(\bar{x})\}\preceq\textbf{F}(x_k)~\text{for any}~\delta>0\\&\implies&\inf\{\textbf{F}(x):x\in B_{\delta}(\bar{x})\}\preceq\lim\limits_{k\rightarrow +\infty}\textbf{F}(x_k)~\text{for any}~\delta>0\\&\implies&\inf\{\textbf{F}(x):x\in B_{\delta}(\bar{x})\}\preceq\boldsymbol{\alpha}~\text{for any}~\delta>0\\&\implies& \lim\limits_{\delta\downarrow 0}\inf\{\textbf{F}(x):x\in B_{\delta}(\bar{x})\}\preceq\boldsymbol{\alpha}\\&\implies&  \liminf\limits_{x\rightarrow\bar{x}}\textbf{F}(x)=\bar{\boldsymbol{\alpha}}\preceq \boldsymbol{\alpha}.
		\end{eqnarray*}
		
		Next, we show that there exists a sequence  $x_k\rightarrow\bar{x}$ with $\textbf{F}(x_k)\rightarrow\bar{\boldsymbol{\alpha}}.$ \\
		
		Consider a nonnegative sequence $\{\delta_{k}\}$ with $\delta_{k}\downarrow 0$, and construct a sequence $\bar{\boldsymbol{\alpha}}_{k}=\inf\{\textbf{F}(x):x\in B_{\delta_{k}}(\bar{x})\}$.\\
		
		As $\delta_{k}\downarrow 0$, by Definition \ref{g5} of lower limit, $\bar{\boldsymbol{\alpha}}_{k}\rightarrow\bar{\boldsymbol{\alpha}}$. Also, by definition of infimum, for a given $\epsilon>0$ and $k\in\mathbb{N}$, there exists $x_k\in B_{\delta_{k}}(\bar{x})$ such that $\textbf{F}(x_k)\preceq \bar{\boldsymbol{\alpha}}_{k}.$ That is, $\bar{\boldsymbol{\alpha}}_{k}\preceq\textbf{F}(x_k)\preceq\boldsymbol{\alpha}_{k}$, where $\boldsymbol{\alpha}_{k}\rightarrow\bar{\boldsymbol{\alpha}}$.\\
		
		Note that $x_k\in B_{\delta_{k}}(\bar{x})$ and $\delta_{k}\downarrow 0.$ Therefore, as $k\rightarrow +\infty$, $x_k\rightarrow\bar{x}$. Also, note that $\textbf{F}(x_k)$ is a monotonic increasing bounded sequence and therefore, by Lemma \ref{p}, $\textbf{F}(x_k)$ converges to $\bar{\boldsymbol{\alpha}}$, and the proof is complete.
		
	\end{proof}
	\begin{lemma}\label{g50}(\emph{Characterization of upper limits of IVFs}). Let $\textbf{F}$ be an extended IVF. Then,
		\begin{eqnarray*}
			\limsup_{x\rightarrow \bar{x}}\textbf{F}(x)=\sup \left\{\boldsymbol{\alpha}\in \overline{I(\mathbb{R})}:\text{there exists a sequence}~ x_k\rightarrow \bar{x}~\text{with}~\textbf{F}(x_k)\rightarrow \boldsymbol{\alpha}\right\}.
		\end{eqnarray*}
	\end{lemma}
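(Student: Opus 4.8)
The plan is to prove this by dualizing the argument for lower limits in Lemma \ref{g20}, interchanging $\inf$ with $\sup$, reversing each $\preceq$, and replacing the monotone increasing convergence of Lemma \ref{p} by its decreasing counterpart. Write $\bar{\boldsymbol{\alpha}}=\limsup_{x\rightarrow\bar{x}}\textbf{F}(x)$ and let $\textbf{S}$ denote the cluster-value set $\{\boldsymbol{\alpha}\in\overline{I(\mathbb{R})}:\text{there is}~x_k\rightarrow\bar{x}~\text{with}~\textbf{F}(x_k)\rightarrow\boldsymbol{\alpha}\}$. By the notion of supremum in Definition \ref{12}, it suffices to establish two facts: (a) $\boldsymbol{\alpha}\preceq\bar{\boldsymbol{\alpha}}$ for every $\boldsymbol{\alpha}\in\textbf{S}$, so that $\bar{\boldsymbol{\alpha}}$ is an upper bound of $\textbf{S}$; and (b) $\bar{\boldsymbol{\alpha}}\in\textbf{S}$, so that $\bar{\boldsymbol{\alpha}}$ is the least upper bound.

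For (a), I would fix $\boldsymbol{\alpha}\in\textbf{S}$ with a witnessing sequence $x_k\rightarrow\bar{x}$, $\textbf{F}(x_k)\rightarrow\boldsymbol{\alpha}$. Given $\delta>0$, convergence $x_k\rightarrow\bar{x}$ yields $k_\delta$ with $x_k\in B_{\delta}(\bar{x})$ for $k\geq k_\delta$, whence $\textbf{F}(x_k)\preceq\sup\{\textbf{F}(x):x\in B_{\delta}(\bar{x})\}$. Letting $k\rightarrow+\infty$ gives $\boldsymbol{\alpha}\preceq\sup\{\textbf{F}(x):x\in B_{\delta}(\bar{x})\}$ for each $\delta>0$; since the right-hand side decreases to $\inf_{\delta>0}\sup\{\textbf{F}(x):x\in B_{\delta}(\bar{x})\}=\bar{\boldsymbol{\alpha}}$ as $\delta\downarrow 0$ by Definition \ref{g8}, passing to the limit gives $\boldsymbol{\alpha}\preceq\bar{\boldsymbol{\alpha}}$.

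For (b), I would pick a sequence $\delta_k\downarrow 0$ and set $\bar{\boldsymbol{\alpha}}_k=\sup\{\textbf{F}(x):x\in B_{\delta_k}(\bar{x})\}$. Since the balls shrink, $\{\bar{\boldsymbol{\alpha}}_k\}$ is monotonic decreasing, and by Definition \ref{g8} it converges to $\bar{\boldsymbol{\alpha}}$. Using the definition of supremum, for each $k$ I would choose $x_k\in B_{\delta_k}(\bar{x})$ with $\boldsymbol{\alpha}_k\preceq\textbf{F}(x_k)\preceq\bar{\boldsymbol{\alpha}}_k$, where $\boldsymbol{\alpha}_k\rightarrow\bar{\boldsymbol{\alpha}}$; because $\delta_k\downarrow 0$ forces $x_k\rightarrow\bar{x}$, the squeeze then gives $\textbf{F}(x_k)\rightarrow\bar{\boldsymbol{\alpha}}$, so $\bar{\boldsymbol{\alpha}}\in\textbf{S}$. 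The monotone-decreasing convergence used here is the exact dual of Lemma \ref{p}; I would invoke the analogue that a bounded below monotonic decreasing sequence of intervals converges to its infimum, which is obtained by the obvious reversals of the inequalities in the proof of Lemma \ref{p}.

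The main obstacle, exactly as in Lemma \ref{g20}, is making the construction in (b) fully rigorous in $\overline{I(\mathbb{R})}$ rather than merely in $I(\mathbb{R})$: one must treat the case $\bar{\boldsymbol{\alpha}}=+\infty$, where the supremum set is unbounded above and $x_k$ is selected via Definition \ref{g22}(2) instead of through an $\epsilon$-gap, and confirm that the squeeze together with the dual of Lemma \ref{p} still delivers convergence in the extended sense. Once these boundary cases are dispatched, (a) and (b) jointly identify $\bar{\boldsymbol{\alpha}}$ as $\sup\textbf{S}$, completing the proof.
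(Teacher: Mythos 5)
Your proposal is correct and takes essentially the same approach as the paper: the paper's entire proof of Lemma \ref{g50} is the single line ``Similar to the proof of Lemma \ref{g20},'' and your argument is exactly that dualization---showing $\limsup_{x\rightarrow\bar{x}}\textbf{F}(x)$ is an upper bound of the cluster-value set, then realizing it as $\lim\textbf{F}(x_k)$ along $x_k\rightarrow\bar{x}$ chosen from shrinking balls, squeezed via the decreasing counterpart of Lemma \ref{p}. The extended-value boundary cases you flag are left implicit in the paper's proof of Lemma \ref{g20} as well, so your write-up is, if anything, slightly more careful than the original.
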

	\begin{proof}
		Similar to the proof of Lemma \ref{g20}.
	\end{proof}
	\begin{definition}\label{g16}(\emph{Level set of an IVF}). Let $\textbf{F}$ be an extended IVF. For an $\boldsymbol{\alpha}\in \overline{I(\mathbb{R})}$, the level set of $\textbf{F}$, denoted as lev$_{\boldsymbol{\alpha}\nprec}\textbf{F}$, is defined by \[
		\text{lev}_{\boldsymbol{\alpha}\nprec}\textbf{F}=\{x\in\mathcal{X}:\boldsymbol{\alpha}\nprec \textbf{F}(x)\}.\]
	\end{definition}
	\begin{example}
		Consider $\textbf{F}:{\mathbb{R}}^{2}\rightarrow \overline{I(\mathbb{R})}$ as $\textbf{F}(x)=[1,2]\odot{x_1}^2\oplus[3,4]\odot e^{{x_2}^2}$ and $\boldsymbol{\alpha}=[-1,10]$. Then,
		\begin{eqnarray*}
			\text{lev}_{\boldsymbol{\alpha}\nprec}\textbf{F}&=&\left\{(x_1,x_2)\in {\mathbb{R}}^2:[-1,10]\nprec [1,2]\odot{x_1}^2\oplus[3,4]\odot e^{{x_2}^2}\right\}\\&=&\left\{(x_1,x_2)\in {\mathbb{R}}^2:[-1,10]\nprec\left[{x_1}^2+3e^{{x_2}^2},2{x_1}^2+4e^{{x_2}^2}\right]\right\}\\&=&\Big\{(x_1,x_2)\in {\mathbb{R}}^2:\left[{x_1}^2+3e^{{x_2}^2},2{x_1}^2+4e^{{x_2}^2}\right]\preceq [-1,10] ~\text{or}\\
			&&~~~~~~~~~~~~~~~~~~~~~~~~~~~~[-1,10] ~\text{and}~ \Big[{x_1}^2+3e^{{x_2}^2},2{x_1}^2+4e^{{x_2}^2}\Big]~\text{are not comparable}\Big\}\\&=&\Big\{(x_1,x_2)\in {\mathbb{R}}^2:[-1,10] ~\text{and}~ \left[{x_1}^2+3e^{{x_2}^2},2{x_1}^2+4e^{{x_2}^2}\right]~\text{are not comparable}\Big\}\\&=&\Big\{(x_1,x_2)\in {\mathbb{R}}^2:`{x_1}^2+3e^{{x_2}^2}< -1~\text{and}~2{x_1}^2+4e^{{x_2}^2}>10\text{'}~\text{or}\\&&~~~~~~~~~~~~~~~~~~~~~~~~~~~~`{x_1}^2+3e^{{x_2}^2}>-1~\text{and}~2{x_1}^2+4e^{{x_2}^2}< 10\text{'}\Big\}\\&=&\left\{(x_1,x_2)\in {\mathbb{R}}^2:{x_1}^2+3e^{{x_2}^2}>-1~\text{and}~2{x_1}^2+4e^{{x_2}^2}< 10\right\}\\&=&\left\{(x_1,x_2)\in {\mathbb{R}}^2:2{x_1}^2+4e^{{x_2}^2}< 10\right\}.
		\end{eqnarray*}
		Hence,\[\text{lev}_{\boldsymbol{\alpha}\nprec}\textbf{F}=\left\{(x_1,x_2)\in {\mathbb{R}}^2:{x_1}^2+2e^{{x_2}^2}< 5\right\}.\]
	\end{example}
	\begin{definition}(\emph{Level-bounded IVF}). An extended IVF $\textbf{F}$ is said to be level-bounded if for any $\boldsymbol{\alpha}\in I(\mathbb{R})$, lev$_{\boldsymbol{\alpha}\nprec}\textbf{F}$ is bounded.
	\end{definition}
	\begin{lemma}\label{g68}
		Let $\textbf{F}$ be an extended IVF and $\bar{x}\in\mathcal{X}$. Then,
		\begin{equation}\label{g72}
		\inf\limits_{\{x_k\}}\left(\liminf\textbf{F}(x_k)\right)\nprec\liminf_{x\rightarrow \bar{x}}\textbf{F}(x),
		\end{equation}
		where the infimum on the left-hand side is taken over all sequences $x_k\rightarrow\bar{x}.$
	\end{lemma}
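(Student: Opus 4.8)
The plan is to reduce the claim to a single order inequality between intervals and then exploit the greatest-lower-bound property of the interval infimum. Concretely, I would first show that $\liminf\limits_{x\rightarrow\bar{x}}\textbf{F}(x)$ is a lower bound (in the $\preceq$ sense) of the set $\left\{\liminf\textbf{F}(x_k):x_k\rightarrow\bar{x}\right\}$, i.e. that $\liminf\limits_{x\rightarrow\bar{x}}\textbf{F}(x)\preceq\liminf\textbf{F}(x_k)$ for \emph{every} sequence $x_k\rightarrow\bar{x}$. Granting this, Definition \ref{g11} forces $\liminf\limits_{x\rightarrow\bar{x}}\textbf{F}(x)\preceq\inf\limits_{\{x_k\}}\left(\liminf\textbf{F}(x_k)\right)$, and the conclusion (\ref{g72}) follows at once: writing $\textbf{A}=\inf\limits_{\{x_k\}}\left(\liminf\textbf{F}(x_k)\right)$ and $\textbf{B}=\liminf\limits_{x\rightarrow\bar{x}}\textbf{F}(x)$, the relation $\textbf{B}\preceq\textbf{A}$ rules out $\textbf{A}\prec\textbf{B}$, since $\textbf{A}\prec\textbf{B}$ would give $\textbf{A}\preceq\textbf{B}$ with $\textbf{A}\neq\textbf{B}$, contradicting antisymmetry of $\preceq$; hence $\textbf{A}\nprec\textbf{B}$, which is exactly (\ref{g72}).

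The heart of the argument is therefore the pointwise estimate. Fixing a sequence $x_k\rightarrow\bar{x}$ and writing $\textbf{G}_{\delta}=\inf\{\textbf{F}(x):x\in B_{\delta}(\bar{x})\}$, I would use convergence $x_k\rightarrow\bar{x}$: for each $\delta>0$ there is $k_{\delta}\in\mathbb{N}$ with $x_k\in B_{\delta}(\bar{x})$ for all $k\geq k_{\delta}$, so $\textbf{G}_{\delta}\preceq\textbf{F}(x_k)$ for every $k\geq k_{\delta}$. Thus $\textbf{G}_{\delta}$ is a lower bound of each tail $\{\textbf{F}(x_j):j\geq m\}$ with $m\geq k_{\delta}$, giving $\textbf{G}_{\delta}\preceq\inf\{\textbf{F}(x_j):j\geq m\}$. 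The tail infima are monotonic increasing in $m$ and converge to $\liminf\textbf{F}(x_k)$, so passing to the limit (via Lemma \ref{p} and the coordinatewise limit behaviour from Remark \ref{g54}) yields $\textbf{G}_{\delta}\preceq\liminf\textbf{F}(x_k)$ for every $\delta>0$. Taking the supremum over $\delta>0$ and invoking Definition \ref{g5} then gives $\liminf\limits_{x\rightarrow\bar{x}}\textbf{F}(x)=\sup\limits_{\delta>0}\textbf{G}_{\delta}\preceq\liminf\textbf{F}(x_k)$, as required.

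I expect the main obstacle to be the careful justification of the limit passage in the previous paragraph: one must check that the tail infima $\inf\{\textbf{F}(x_j):j\geq m\}$ are well defined and form a bounded monotonic increasing sequence so that Lemma \ref{p} applies, and one must handle the possibility that $\{\textbf{F}(x_k)\}$ is unbounded, where $\liminf\textbf{F}(x_k)$ may take an extended value. A clean way to bypass these order-convergence technicalities is to split via Remark \ref{g54}: with $\textbf{F}(x)=\left[\underline{f}(x),\overline{f}(x)\right]$, the pointwise estimate reduces to the two classical scalar facts $\liminf\limits_{x\rightarrow\bar{x}}\underline{f}(x)\leq\liminf\underline{f}(x_k)$ and $\liminf\limits_{x\rightarrow\bar{x}}\overline{f}(x)\leq\liminf\overline{f}(x_k)$, which hold in $\mathbb{R}\cup\{-\infty,+\infty\}$ and immediately give the interval inequality $\liminf\limits_{x\rightarrow\bar{x}}\textbf{F}(x)\preceq\liminf\textbf{F}(x_k)$; note that the infimum over sequences need not be attained simultaneously in both endpoints, which is precisely why the conclusion is only $\nprec$ rather than equality.
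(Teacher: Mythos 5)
Your proposal is correct, and it takes a genuinely different route from the paper's own proof. The paper sets $\textbf{M}=\liminf_{x\rightarrow\bar{x}}\textbf{F}(x)$, splits into the cases $\textbf{M}=-\infty$, $\textbf{M}=+\infty$, $\textbf{M}\in I(\mathbb{R})$, and in the finite case argues by contradiction inside interval arithmetic: for each $\epsilon>0$ it produces $\delta>0$ with $\textbf{F}(x)\npreceq\textbf{M}\ominus_{gH}[\epsilon,\epsilon]$ on $B_{\delta}(\bar{x})$, deduces $\liminf\textbf{F}(x_k)\nprec\textbf{M}$ for every sequence $x_k\rightarrow\bar{x}$, and then passes from this per-sequence non-domination to $\textbf{L}\nprec\textbf{M}$ for the infimum $\textbf{L}$. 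You instead prove the stronger positive statement that $\liminf_{x\rightarrow\bar{x}}\textbf{F}(x)$ is a genuine $\preceq$-lower bound of every sequence-liminf (via $\textbf{G}_\delta\preceq$ tail infima and a limit passage, or, more robustly, via the endpoint decomposition of Remark \ref{g54} and the two classical scalar inequalities), and then finish with the greatest-lower-bound property of Definition \ref{g11} together with Definition \ref{g99}(iv), which turns $\textbf{B}\preceq\textbf{A}$ directly into $\textbf{A}\nprec\textbf{B}$. Your route buys three things: it avoids the case analysis on extended values (the scalar reduction absorbs $\pm\infty$ uniformly); it yields a stronger conclusion than the lemma states; and, notably, it makes the final step airtight, because in this partial order per-element non-domination does not by itself transfer to the coordinatewise infimum --- for instance $\textbf{Y}_1=[-5,11]$ and $\textbf{Y}_2=[1,5]$ each satisfy $\textbf{Y}_i\nprec[0,10]$ (indeed each is incomparable with $[0,10]$, and each even satisfies $\textbf{Y}_i\npreceq[0,10]\ominus_{gH}[\epsilon,\epsilon]$ for all $\epsilon>0$), yet their coordinatewise infimum $[-5,5]\prec[0,10]$. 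So the paper's concluding inference tacitly relies on exactly the coordinatewise lower bound that you establish explicitly, while the paper's approach has the merit of staying entirely within the intrinsic $\ominus_{gH}$/$\npreceq$ calculus of Lemma \ref{g28} without ever invoking endpoint functions. Your closing observation --- that the two endpoint infima over sequences need not be attained by a single sequence, which is why one should expect only $\nprec$ rather than equality in (\ref{g72}) --- is also a correct and useful remark that the paper does not make.
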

	\begin{proof}
		Let $\textbf{M}=\liminf\limits_{x\rightarrow \bar{x}}\textbf{F}(x)$ and $\textbf{L}=\inf\limits_{\{x_k\}} \liminf\textbf{F}(x_k)$.\\
		
		If $\textbf{M}=-\infty$, there is nothing to prove.\\
		
		Next, let $\textbf{M}=+\infty$. Let $\{x_k\}$ be an arbitrary sequence converging to $\bar{x}$. We show that $\textbf{F}(x_k)\rightarrow +\infty$. Since $\textbf{M}=+\infty$, for any given $\alpha>0$, there exists a $\delta>0$ such that $[\alpha,\alpha]\prec\inf_{x\in B_\delta(\bar{x})}\textbf{F}(x).$ Since $x_k\rightarrow\bar{x}$, there exists an integer $m>0$ such that $x_k\in B_\delta(\bar{x})~\text{for all}~ n\geq m$.
		Thus, $[\alpha,\alpha]\prec\textbf{F}(x_k)~\text{for all}~ n\geq m$, and hence  $\textbf{F}(x_k)\rightarrow +\infty$.\\
		
		Finally, let  $[-\infty,-\infty]\prec\textbf{M}\prec [+\infty,+\infty]$, i.e., $\textbf{M}\in I(\mathbb{R}).$ Let, if possible, there exists an $\epsilon_0>0$ such that for all $\delta>0$, $\inf_{x\in B_\delta(\bar{x})}\textbf{F}(x)\preceq \textbf{M}\ominus_{gH}[\epsilon_0,\epsilon_0]$. Then,
		\begin{eqnarray*}
		&&\lim\limits_{\delta\downarrow 0}\inf_{x\in B_\delta(\bar{x})}\textbf{F}(x)\preceq \textbf{M}\ominus_{gH}[\epsilon_0,\epsilon_0]\\&\implies&\liminf\limits_{x\rightarrow \bar{x}}\textbf{F}(x)\preceq \textbf{M}\ominus_{gH}[\epsilon_0,\epsilon_0]\\&\text{i.e.,}&\textbf{M}\preceq \textbf{M}\ominus_{gH}[\epsilon_0,\epsilon_0],
		\end{eqnarray*}
		which is not true. Thus, for a given $\epsilon>0,$ there exists a $\delta>0$ such that $\inf_{x\in B_\delta(\bar{x})}\textbf{F}(x)\npreceq \textbf{M}\ominus_{gH}[\epsilon,\epsilon]$. This implies $\textbf{F}(x)\npreceq\textbf{M}\ominus_{gH}[\epsilon,\epsilon]~\text{for all}~ x\in B_\delta(\bar{x}).$ \\

		Let $\{x_k\}$ be a sequence converging to $\bar{x}$. Since $x_k\in B_\delta(\bar{x})$ for large enough $k$, we have
		$ \liminf\textbf{F}(x_k)\npreceq\textbf{M}\ominus_{gH}[\epsilon,\epsilon]~\text{for any}~\epsilon>0$. Thus, $\liminf\textbf{F}(x_k)\nprec\textbf{M}$ for any sequence converging to $\bar{x}$, and hence $\textbf{L}\nprec\textbf{M}$. Therefore, (\ref{g72}) holds.
	\end{proof}
	\begin{theorem}\label{n1}
	Let $\textbf{F}$ be an extended IVF. Then,  \textbf{F} is $gH$-lsc on $\mathcal{X}$ if and only if the level set  lev$_{\boldsymbol{\alpha}\nprec}\textbf{F}$  is closed for every $\boldsymbol{\alpha}\in I(\mathbb{R})$.
	\end{theorem}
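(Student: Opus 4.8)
The plan is to prove the two implications separately, in each case translating statements about the level set $\text{lev}_{\boldsymbol{\alpha}\nprec}\textbf{F}$ into the sequential behaviour of $\textbf{F}$. First I would record the useful reformulation that, since for any two intervals exactly one of $\textbf{A}\prec\textbf{B}$ or $\textbf{A}\nprec\textbf{B}$ holds (the Remark following Definition \ref{g99}), the set $\text{lev}_{\boldsymbol{\alpha}\nprec}\textbf{F}$ is precisely the complement of $\{x\in\mathcal{X}:\boldsymbol{\alpha}\prec\textbf{F}(x)\}$. Because $\mathcal{X}$ is a finite dimensional Banach space, closedness of this set is equivalent to sequential closedness, so throughout I would test it against sequences $x_k\rightarrow\bar{x}$.

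For the forward implication, assume $\textbf{F}$ is $gH$-lsc and fix $\boldsymbol{\alpha}\in I(\mathbb{R})$. Take $\{x_k\}\subseteq\text{lev}_{\boldsymbol{\alpha}\nprec}\textbf{F}$ with $x_k\rightarrow\bar{x}$; the goal is to show $\boldsymbol{\alpha}\nprec\textbf{F}(\bar{x})$. I would feed the $gH$-lsc hypothesis through its $\epsilon$--$\delta$ form (Theorem \ref{g48}): for each $\epsilon>0$ there is a $\delta>0$ with $\textbf{F}(\bar{x})\ominus_{gH}[\epsilon,\epsilon]\prec\textbf{F}(x)$ for all $x\in B_{\delta}(\bar{x})$, and since $x_k\in B_{\delta}(\bar{x})$ for large $k$, I would combine this with the hypothesis $\boldsymbol{\alpha}\nprec\textbf{F}(x_k)$. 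The tool to strip off the $\ominus_{gH}[\epsilon,\epsilon]$ and land on an $\epsilon$-free relation is $(\ref{8})$ of Lemma \ref{g28} together with letting $\epsilon\downarrow 0$; alternatively one can route the argument through Lemma \ref{g68}, which already packages the relevant $\liminf$ comparison in the $\nprec$ form.

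For the converse I would argue by contraposition. If $\textbf{F}$ fails to be $gH$-lsc at some $\bar{x}$, then $\textbf{F}(\bar{x})\npreceq\liminf_{x\rightarrow\bar{x}}\textbf{F}(x)=:\textbf{M}$. Invoking the characterization of the lower limit in Lemma \ref{g20}, I would extract a sequence $x_k\rightarrow\bar{x}$ with $\textbf{F}(x_k)\rightarrow\textbf{M}$. The idea is then to insert a test interval strictly between $\textbf{M}$ and $\textbf{F}(\bar{x})$: set $\boldsymbol{\alpha}=\textbf{F}(\bar{x})\ominus_{gH}[\epsilon,\epsilon]$ for small $\epsilon>0$, so that $\boldsymbol{\alpha}\prec\textbf{F}(\bar{x})$, whence $\bar{x}\notin\text{lev}_{\boldsymbol{\alpha}\nprec}\textbf{F}$; meanwhile $\textbf{F}(\bar{x})\npreceq\textbf{M}$ yields, via the endpoint analysis behind Lemma \ref{g28}, that $\boldsymbol{\alpha}\nprec\textbf{M}$ by a strict endpoint margin, and since $\textbf{F}(x_k)\rightarrow\textbf{M}$ this margin is stable, giving $\boldsymbol{\alpha}\nprec\textbf{F}(x_k)$ for all large $k$, i.e.\ $x_k\in\text{lev}_{\boldsymbol{\alpha}\nprec}\textbf{F}$. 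This contradicts the assumed closedness of the level set, completing the proof.

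The delicate point is entirely in the forward implication, and it is caused by the partial order. In the scalar case one simply chains $f(\bar{x})\le\liminf f(x_k)\le\alpha$, but the interval analogue ``$\boldsymbol{\alpha}\nprec\textbf{F}(x_k)$ for all $k$ $\Rightarrow$ $\boldsymbol{\alpha}\nprec\lim_k\textbf{F}(x_k)$'' is not a formal consequence of the definitions, since a sequence of intervals \emph{incomparable} to $\boldsymbol{\alpha}$ may converge to an interval that \emph{is} strictly dominated by $\boldsymbol{\alpha}$. I therefore expect the real work to lie in justifying this limit transition in the $\nprec$ relation; the cleanest safeguard is to pass to endpoints through Note \ref{g75} (writing $\textbf{F}=\left[\underline{f},\overline{f}\right]$ with $\underline{f},\overline{f}$ lsc) and to track the three Cases of strict dominance in Definition \ref{g99} separately, rather than manipulating $\nprec$ as though it were a total order.
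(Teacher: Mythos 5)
Your instinct about where the danger lies is exactly right, and unfortunately it is fatal: the forward implication cannot be repaired, because under the paper's own definitions it is false. Take $\mathcal{X}=\mathbb{R}$, $\textbf{F}(x)=[\min\{x,3\},\,3]$ and $\boldsymbol{\alpha}=[0,2]$. Both endpoint functions are continuous, so $\textbf{F}$ is $gH$-continuous and in particular $gH$-lsc (Note \ref{g75} and Theorem \ref{g44}). Since $2\leq 3$ always and $\textbf{F}(x)$ never equals $[0,2]$, one has $\boldsymbol{\alpha}\prec\textbf{F}(x)$ exactly when $x\geq 0$, so $\text{lev}_{\boldsymbol{\alpha}\nprec}\textbf{F}=(-\infty,0)$, which is not closed. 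This is precisely your predicted failure mode: $\textbf{F}(-1/k)=[-1/k,3]$ is incomparable with $[0,2]$ for every $k$, yet the limit $[0,3]$ is strictly dominated by $[0,2]$. So the transition ``$\boldsymbol{\alpha}\nprec\textbf{F}(x_k)$ for all $k$ implies $\boldsymbol{\alpha}\nprec\lim_k\textbf{F}(x_k)$'' is not merely unjustified but false, and your proposed safeguard of passing to endpoints via Note \ref{g75} has nothing to close the gap with: for lsc endpoints only the pure dominance part $\{x:\textbf{F}(x)\preceq\boldsymbol{\alpha}\}$ (an intersection of two scalar sublevel sets) is closed, and it is exactly the incomparable pairs that Definition \ref{g16} admits into $\text{lev}_{\boldsymbol{\alpha}\nprec}\textbf{F}$ that destroy closedness. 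In the example your concrete scheme also visibly breaks down: $\textbf{F}(0)\ominus_{gH}[\epsilon,\epsilon]=[-\epsilon,3-\epsilon]\prec\textbf{F}(-1/k)$ for $k>1/\epsilon$ and $\boldsymbol{\alpha}\nprec\textbf{F}(-1/k)$ for all $k$, yet $\boldsymbol{\alpha}\prec\textbf{F}(0)$; hence no combination of Theorem \ref{g48}, Lemma \ref{g28} and $\epsilon\downarrow 0$ can deliver $\boldsymbol{\alpha}\nprec\textbf{F}(0)$.

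You should also know that the paper's own proof commits, silently, the very step you flagged: it chains $\boldsymbol{\alpha}\nprec\textbf{F}(x_k)\implies\boldsymbol{\alpha}\nprec\liminf\textbf{F}(x_k)\implies\boldsymbol{\alpha}\nprec\liminf_{x\rightarrow\bar{x}}\textbf{F}(x)$ (the latter by Lemma \ref{g68}) as though $\nprec$ passed to limits; the first arrow fails in the example above. So the honest resolution of the forward direction is not a cleverer proof but a counterexample, or a weakened statement such as closedness of $\{x:\textbf{F}(x)\preceq\boldsymbol{\alpha}\}$, which does follow from $gH$-lower semicontinuity. Your converse, by contrast, is sound and is essentially the paper's argument in contrapositive form: both rely on Lemma \ref{g20} to produce $x_k\rightarrow\bar{x}$ with $\textbf{F}(x_k)\rightarrow\liminf_{x\rightarrow\bar{x}}\textbf{F}(x)$, and both trap the tail of $\{x_k\}$ inside a level set that excludes $\bar{x}$. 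Your single test interval $\boldsymbol{\alpha}=\textbf{F}(\bar{x})\ominus_{gH}[\epsilon,\epsilon]$ works because the failure $\textbf{F}(\bar{x})\npreceq\textbf{M}$ supplies a strict endpoint margin that the convergence $\textbf{F}(x_k)\rightarrow\textbf{M}$ preserves, and it has the side benefit of bypassing the paper's unargued assertion that $\liminf_{x\rightarrow\bar{x}}\textbf{F}(x)$ ``is comparable with'' $\textbf{F}(\bar{x})$ (true, but only because $B_{\delta}(\bar{x})$ contains $\bar{x}$, so the lower limit lies endpointwise below $\textbf{F}(\bar{x})$); when writing it up, do treat the degenerate cases $\liminf_{x\rightarrow\bar{x}}\textbf{F}(x)=\pm\infty$ explicitly.
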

	\begin{proof}
	 Let \textbf{F} be $gH$-lsc on $\mathcal{X}$.  For a fixed $\boldsymbol{\alpha}\in I(\mathbb{R})$, suppose that $\{x_k\}\subseteq$ lev$_{\boldsymbol{\alpha}\nprec}\textbf{F}$ such that $x_k\rightarrow\bar{x}.$ Then,
	 \begin{eqnarray*}
	 &&\boldsymbol{\alpha}\nprec\textbf{F}(x_k)\\&\implies& \boldsymbol{\alpha}\nprec\liminf\textbf{F}(x_k)\\&\implies&\boldsymbol{\alpha}\nprec\liminf\limits_{x\rightarrow\bar{x}}\textbf{F}(x),~\text{by Lemma \ref{g68}}\\&\implies&\boldsymbol{\alpha}\nprec\textbf{F}(\bar{x})~\text{since}~\textbf{F}~ \text{is $gH$-lsc at}~ \bar{x}.
	 \end{eqnarray*}
	 Thus, $\bar{x}\in$ lev$_{\boldsymbol{\alpha}\nprec}\textbf{F}$, and hence lev$_{\boldsymbol{\alpha}\nprec}\textbf{F}$ is closed.\\
	
	 Since $\boldsymbol{\alpha}\in I(\mathbb{R})$ is arbitrarly chosen, lev$_{\boldsymbol{\alpha}\nprec}\textbf{F}$ is closed for every $\boldsymbol{\alpha}\in I(\mathbb{R})$.\\
	
	 Conversely, suppose the level set  lev$_{\boldsymbol{\alpha}\nprec}\textbf{F}$ is closed for every $\boldsymbol{\alpha}\in I(\mathbb{R})$. Fix an $\bar{x}\in\mathcal{X}$. To prove that $\textbf{F}$ is $gH$-lsc at $\bar{x}$, we need to show that  \[\textbf{F}(\bar{x})\preceq\liminf\limits_{x\rightarrow \bar{x}}\textbf{F}(x).\] \\
	
	 Let $\bar{\boldsymbol{\alpha}}= \liminf\limits_{x\rightarrow \bar{x}}\textbf{F}(x)$.
		The case of $\bar{\boldsymbol{\alpha}}=+\infty$ is trivial; so assume $\bar{\boldsymbol{\alpha}}\prec [+\infty,+\infty].$\\
		
		By Lemma \ref{g20}, there exists a sequence $x_k\rightarrow\bar{x}$ with $\textbf{F}(x_k)\rightarrow\bar{\boldsymbol{\alpha}}$. For any $\boldsymbol{\alpha}$ such that $\bar{\boldsymbol{\alpha}}\prec\boldsymbol{\alpha}$, it will eventually be true that $\boldsymbol{\alpha}\nprec\textbf{F}(x_k)$, or in other words, that $x_k\in$ lev$_{\boldsymbol{\alpha}\nprec}\textbf{F}$. Since lev$_{\boldsymbol{\alpha}\nprec}\textbf{F}$ is closed, $\bar{x}\in$ lev$_{\boldsymbol{\alpha}\nprec}\textbf{F}$. \\
		
		Thus, $\boldsymbol{\alpha}\nprec\textbf{F}(\bar{x})$ for every $\boldsymbol{\alpha}$ such that $\bar{\boldsymbol{\alpha}}\prec\boldsymbol{\alpha}$, then $\bar{\boldsymbol{\alpha}}\nprec\textbf{F}(\bar{x})$. Therefore, either $\textbf{F}(\bar{x})\preceq\bar{\boldsymbol{\alpha}}$ or $\bar{\boldsymbol{\alpha}}$ and $\textbf{F}(\bar{x})$ are not comparable. But since $\bar{\boldsymbol{\alpha}}=\liminf\limits_{x\rightarrow \bar{x}}\textbf{F}(x)$, so $\bar{\boldsymbol{\alpha}}$ is comparable with $\textbf{F}(\bar{x})$, and hence  $\textbf{F}(\bar{x})\preceq\bar{\boldsymbol{\alpha}}$. \\
		
		Since $\bar{x}\in\mathcal{X}$ is arbitrarily chosen,  $\textbf{F}$ is $gH$-lsc on $\mathcal{X}.$ This completes the proof.
	\end{proof}
	\begin{definition}(\emph{Indicator function}). Consider a subset $\mathcal{S}$ of $\mathcal{X}.$ The indicator function of $\mathcal{S}$ is defined by\[ \delta_{\mathcal{S}}(s)=
		\begin{cases}
		\textbf{0}~~~~~~\text{if}~ s\in\mathcal{S}\\
		+\infty~\text{if}~s\notin \mathcal{S}.
		\end{cases}\]
	\end{definition}
	\begin{remark}\label{g86}
		\begin{enumerate}[(i)]
			\item It is easy to see that $\delta_{\mathcal{S}}$ is proper if and only if $\mathcal{S}$ is nonempty.
			\item By Theorem \ref{n1}, $\delta_{\mathcal{S}}$ is $gH$-lsc if and only if $\mathcal{S}$ is closed.
		\end{enumerate}
	\end{remark}
	\begin{definition}(\emph{Argument minimum of an IVF}). Let $\textbf{F}$ be an extended IVF. Then, the argument minimum of $\textbf{F}$, denoted as $\argmin\limits_{x\in\mathcal{X}}{\mathbf{F}(x)}$, is defined by $$\argmin\limits_{x\in\mathcal{X}}{\mathbf{F}(x)}=
		\begin{cases}
		\Big\{x\in \mathcal{X}:\mathbf{F}(x)=\inf\limits_{y\in\mathcal{X}}\mathbf{F}(y)\Big\} ~~\text{if}~ \inf\limits_{y\in\mathcal{X}}\mathbf{F}(y)\neq +\infty\\
		\emptyset~~~~~~~~~~~~~~~~~~~~~~~~~~~~~~~~~~~~~~~~~~~~~~~\text{if}~\inf\limits_{y\in\mathcal{X}}\mathbf{F}(y)=+\infty.
		\end{cases}$$
	\end{definition}
	\begin{example}
		Consider $\textbf{F}:{\mathbb{R}}^2\rightarrow \overline{I(\mathbb{R})}$ as $\textbf{F}(x_1,x_2)=
		\begin{cases*}
	\left[-\frac{1}{\lvert x_1\rvert},e^{-\frac{1}{\lvert x_1\rvert}+{x_2}^2}\right] & \text{if}~ $x_1\neq 0$\\
	[-\infty,0] & \text{if}~ $x_1= 0.$
		\end{cases*}$
		Then, $\inf\limits_{(x_1,x_2)\in{\mathbb{R}}^2}\mathbf{F}(x_1,x_2)=[-\infty,0].$\\
		\begin{eqnarray*}
			 \argmin\limits_{x\in{\mathbb{R}}^2}\mathbf{F}(x)&=&\left\{(x_1,x_2)\in{\mathbb{R}^2}:\textbf{F}(x_1,x_2)=\inf\limits_{x\in{\mathbb{R}}^2}\mathbf{F}(x_1,x_2)=[-\infty,0]\right\}\\&=&\{(0,x_2):x_2\in\mathbb{R}\}.
		\end{eqnarray*}
		Therefore, $\argmin\limits_{x\in{\mathbb{R}}^2}\mathbf{F}(x)=\{(0,x_2):x_2\in\mathbb{R}\}.$
	\end{example}
	\begin{theorem}\label{g85}\emph{(Minimum attained by an extended IVF)}. Let $\textbf{F}$ be $gH$-lsc, level-bounded and proper extended IVF. Then, the set  $\argmin_\mathcal{X}\textbf{F}$ is nonempty and compact.
	\end{theorem}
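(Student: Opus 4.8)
The plan is to run the usual Weierstrass-type argument in the interval setting: first pin down $\bar{\boldsymbol{\alpha}}:=\inf_{\mathcal{X}}\textbf{F}$ and show it is a genuine interval, then exhibit a point attaining it, and finally identify $\argmin_{\mathcal{X}}\textbf{F}$ with an intersection of closed bounded level sets. Writing $\textbf{F}(x)=[\underline{f}(x),\overline{f}(x)]$, Remark \ref{g54} gives $\bar{\boldsymbol{\alpha}}=\big[\inf_{x}\underline{f}(x),\,\inf_{x}\overline{f}(x)\big]$. That $\bar{\boldsymbol{\alpha}}\prec[+\infty,+\infty]$ is immediate from properness, since some $\bar x$ has $\textbf{F}(\bar x)\prec[+\infty,+\infty]$ and $\bar{\boldsymbol{\alpha}}\preceq\textbf{F}(\bar x)$. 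For the finiteness of the left endpoints I would argue by contradiction: if, say, $\inf_x\underline{f}=-\infty$, choose $x_k$ with $\underline{f}(x_k)\to-\infty$; for any fixed $\boldsymbol{\gamma}\in I(\mathbb{R})$ these $x_k$ eventually satisfy $\boldsymbol{\gamma}\nprec\textbf{F}(x_k)$, so $\{x_k\}\subseteq\text{lev}_{\boldsymbol{\gamma}\nprec}\textbf{F}$, which is bounded by level-boundedness. A convergent subsequence $x_{k_j}\to\bar x$ together with $gH$-lower semicontinuity (Note \ref{g75}, Lemma \ref{g20}) would then force $\textbf{F}(\bar x)$ to have left endpoint $-\infty$, contradicting $[-\infty,-\infty]\prec\textbf{F}(\bar x)$. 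Hence $\bar{\boldsymbol{\alpha}}\in I(\mathbb{R})$.

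Next I would produce a minimizer. Taking a sequence of values $\textbf{F}(x_k)\to\bar{\boldsymbol{\alpha}}$ and fixing any $\boldsymbol{\beta}\succ\bar{\boldsymbol{\alpha}}$, the tail of $\{x_k\}$ lies in $\text{lev}_{\boldsymbol{\beta}\nprec}\textbf{F}$, which is bounded; since $\mathcal{X}$ is finite dimensional, Bolzano--Weierstrass gives $x_{k_j}\to\bar x$. By $gH$-lower semicontinuity, $\textbf{F}(\bar x)\preceq\liminf\limits_{x\rightarrow\bar x}\textbf{F}(x)$, and by Lemma \ref{g20} this lower limit is $\preceq\bar{\boldsymbol{\alpha}}$ because $\bar{\boldsymbol{\alpha}}$ is one of the cluster values defining it. As $\bar{\boldsymbol{\alpha}}\preceq\textbf{F}(\bar x)$ holds for every $x$, antisymmetry of $\preceq$ yields $\textbf{F}(\bar x)=\bar{\boldsymbol{\alpha}}$, so $\bar x\in\argmin_{\mathcal{X}}\textbf{F}$ and the set is nonempty.

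For compactness I would use the identity $\argmin_{\mathcal{X}}\textbf{F}=\{x:\textbf{F}(x)\preceq\bar{\boldsymbol{\alpha}}\}=\bigcap_{\boldsymbol{\alpha}\succ\bar{\boldsymbol{\alpha}}}\text{lev}_{\boldsymbol{\alpha}\nprec}\textbf{F}$, where the first equality is antisymmetry of $\preceq$ and the second is a direct check from Definition \ref{g99} of $\prec$. Each $\text{lev}_{\boldsymbol{\alpha}\nprec}\textbf{F}$ is closed by Theorem \ref{n1} and bounded by level-boundedness, so the intersection is closed and bounded, and finite-dimensionality of $\mathcal{X}$ makes it compact.

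The step I expect to be the main obstacle is the very first move of the second paragraph: producing a single sequence with $\textbf{F}(x_k)\to\bar{\boldsymbol{\alpha}}$, equivalently showing that the compact sets $\text{lev}_{\boldsymbol{\alpha}\nprec}\textbf{F}$ with $\boldsymbol{\alpha}\succ\bar{\boldsymbol{\alpha}}$ have nonempty intersection. Convergence $\textbf{F}(x_k)\to\bar{\boldsymbol{\alpha}}$ demands $\underline{f}(x_k)\to\inf\underline{f}$ and $\overline{f}(x_k)\to\inf\overline{f}$ \emph{along the same} sequence, whereas the two endpoint infima may in principle be approached only along different sequences. Since $\preceq$ is merely a partial order, the family of level sets is not totally ordered, so the finite intersection property that makes the real-valued Weierstrass theorem work is not automatic here; controlling this simultaneity is the crux of the argument and the point at which the interval-valued case genuinely differs from the scalar one.
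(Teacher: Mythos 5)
You have put your finger on exactly the right spot, and your pessimism is justified: the missing step is not a removable technicality but a genuine failure, so the proposal cannot be completed as written. Nothing in $gH$-lower semicontinuity, properness and level-boundedness forces the two endpoint infima to be approached along a common sequence, and without that there need be no $x$ with $\textbf{F}(x)=\bar{\boldsymbol{\alpha}}$. Concretely, take $\mathcal{X}=\mathbb{R}$ and
\[
\textbf{F}(x)=\left[\,x^2,\ \max\{x^2,\,(x-1)^2+1\}\,\right].
\]
Both endpoint functions are continuous, so $\textbf{F}$ is $gH$-continuous (Note \ref{g75}); it is proper; and it is level-bounded, since $\text{lev}_{[a,b]\nprec}\textbf{F}\subseteq\{x:x^2<a\}\cup\{x:\overline{f}(x)<b\}\cup\{x:x^2=a\}$ is bounded for every $[a,b]\in I(\mathbb{R})$. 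By Remark \ref{g54}, $\inf_{\mathbb{R}}\textbf{F}=[0,1]$, but $\underline{f}(x)=0$ forces $x=0$, where $\overline{f}(0)=2$: the infimum of $\underline{f}$ is attained only at $x=0$ and that of $\overline{f}$ only at $x=1$. Hence $\argmin_{\mathbb{R}}\textbf{F}=\emptyset$, and the statement itself is false without an additional hypothesis guaranteeing the simultaneity you isolated (for instance, some comparability or chain condition on the values of $\textbf{F}$, or a definition of minimizer via efficient points rather than attainment of the componentwise infimum).

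It is worth noting that the paper's own proof stumbles at precisely the same point, only less visibly. It observes that each $\text{lev}_{\boldsymbol{\alpha}\nprec}\textbf{F}$ with $\bar{\boldsymbol{\alpha}}\prec\boldsymbol{\alpha}\prec[+\infty,+\infty]$ is nonempty and bounded and that these sets are ``nested'' when $\boldsymbol{\alpha}\prec\boldsymbol{\beta}$, and then concludes that their total intersection, which equals $\argmin_{\mathcal{X}}\textbf{F}$, is nonempty. But nestedness holds only along $\prec$-chains; since $\preceq$ is merely a partial order, the full family need not be downward directed, and the finite intersection property can fail --- exactly your objection. In the example above, for small $\epsilon>0$ one computes $\text{lev}_{[\epsilon,1]\nprec}\textbf{F}=(-\sqrt{\epsilon},\sqrt{\epsilon})$ while $\text{lev}_{[0,1+\epsilon]\nprec}\textbf{F}\subseteq(1-\sqrt{\epsilon},1+\sqrt{\epsilon})$: two nonempty level sets that are already disjoint. (Incidentally, the first of these is open, so even the closedness asserted in Theorem \ref{n1}, on which both you and the paper rely for compactness, fails here: the relation $\nprec$ mixes strict and nonstrict inequalities and does not survive passage to limits.) The remainder of your argument --- the finiteness of $\bar{\boldsymbol{\alpha}}$ via level-boundedness and properness, the lsc-plus-Bolzano--Weierstrass step, and the identification $\argmin_{\mathcal{X}}\textbf{F}=\bigcap_{\bar{\boldsymbol{\alpha}}\prec\boldsymbol{\alpha}}\text{lev}_{\boldsymbol{\alpha}\nprec}\textbf{F}$ --- is sound and in fact matches the paper's skeleton; the crux you refused to paper over is the crux on which the published proof, and the theorem as stated, founder.
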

	\begin{proof}
		Let $\bar{\boldsymbol{\alpha}}=\inf\textbf{F}$.  So, $\bar{\boldsymbol{\alpha}}\prec [+\infty,+\infty]$ because $\textbf{F}$ is proper.\\
		
		Note that lev$_{\boldsymbol{\alpha}\nprec}\textbf{F}\neq
		\emptyset$ for any $\boldsymbol{\alpha}$ that satisfies $\bar{\boldsymbol{\alpha}}\prec\boldsymbol{\alpha}\prec [+\infty,+\infty]$. Also, as $\textbf{F}$ is level-bounded, lev$_{\boldsymbol{\alpha}\nprec}\textbf{F}$ is bounded and by Theorem \ref{n1}, it is also closed. Thus, lev$_{\boldsymbol{\alpha}\nprec}\textbf{F}$ is nonempty compact for $\bar{\boldsymbol{\alpha}}\prec\boldsymbol{\alpha}\prec [+\infty,+\infty]$ and are nested as lev$_{\boldsymbol{\alpha}\nprec}\textbf{F}\subseteq$ lev$_{\boldsymbol{\beta}\nprec}\textbf{F}$ when $\boldsymbol{\alpha}\prec\boldsymbol{\beta}.$ Therefore,\[\bigcap_{ \bar{\boldsymbol{\alpha}}\prec\boldsymbol{\alpha}\prec  +\infty}\text{lev}_{\boldsymbol{\alpha}\nprec}\textbf{F}=\text{lev}_{\bar{\boldsymbol{\alpha}}\nprec}\textbf{F}=\argmin_\mathcal{X}\textbf{F}\] is nonempty and compact.
	\end{proof}
	Next, we present a theorem which gives a characterization of the argument minimum set of an IVF in terms of $gH$-G\^{a}teaux differentiability. An IVF $\textbf{F}:\mathcal{X}\rightarrow I(\mathbb{R})$ is said to be $gH$-G\^{a}teaux differentiable (see \cite{ghosh2020generalized}) at $\bar{x}\in\mathcal{X}$ if  the limit
\[
\textbf{F}_\mathscr{G}(\bar{x})(h)=\lim_{\lambda \to 0+}\frac{1}{\lambda}\odot\left(\textbf{F}(\bar{x}+\lambda h)\ominus_{gH}\textbf{F}(\bar{x})\right)
\]
exists for all $h \in \mathcal{X}$ and $\textbf{F}_\mathscr{G}(\bar{x})$ is a $gH$-continuous linear IVF from $\mathcal{X}$ to $I(\mathbb{R})$. Then, we call $\textbf{F}_\mathscr{G}(\bar{x})$ as the $gH$-G\^{a}teaux derivative of $\textbf{F}$ at $\bar{x}$.
	
	\begin{theorem}\label{n2}\emph{(Characterization of the set argument minimum of an IVF)}.
		Let $\textbf{F}$ be an extended IVF and $\bar{x}\in\argmin_{x\in\mathcal{X}}\textbf{F}(x).$ If the function $\textbf{F}$ has a $gH$-G\^{a}teaux derivative at $\bar{x}$ in every direction $h\in\mathcal{X}$, then\[\textbf{F}_\mathscr{G}(\bar{x})(h)=\textbf{0}~\text{for all}~h\in\mathcal{X}.\]
	\end{theorem}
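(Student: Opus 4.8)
The plan is to establish the interval analogue of the classical first-order necessary condition, namely that a $gH$-G\^{a}teaux differentiable function has vanishing directional derivative at a minimizer, exploiting both the direction $h$ and its negation $-h$.

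First I would use that $\bar{x}\in\argmin_{x\in\mathcal{X}}\textbf{F}(x)$ means $\textbf{F}(\bar{x})=\inf_{y\in\mathcal{X}}\textbf{F}(y)$, so that $\textbf{F}(\bar{x})\preceq\textbf{F}(\bar{x}+\lambda h)$ for every $h\in\mathcal{X}$ and every $\lambda>0$. Writing $\textbf{F}(\cdot)=[\underline{f}(\cdot),\overline{f}(\cdot)]$, this dominance says $\underline{f}(\bar{x})\leq\underline{f}(\bar{x}+\lambda h)$ and $\overline{f}(\bar{x})\leq\overline{f}(\bar{x}+\lambda h)$; hence both endpoints of $\textbf{F}(\bar{x}+\lambda h)\ominus_{gH}\textbf{F}(\bar{x})$ are nonnegative and $\textbf{0}\preceq\textbf{F}(\bar{x}+\lambda h)\ominus_{gH}\textbf{F}(\bar{x})$. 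Multiplying by the positive scalar $\frac{1}{\lambda}$ preserves the dominance, and letting $\lambda\to 0+$ (the limit exists by hypothesis, and $\preceq$ passes to the limit through the endpoint-wise convergence of intervals recorded earlier in the text) gives $\textbf{0}\preceq\textbf{F}_\mathscr{G}(\bar{x})(h)$ for every $h$.

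Next I would apply the identical argument in the admissible direction $-h$ to obtain $\textbf{0}\preceq\textbf{F}_\mathscr{G}(\bar{x})(-h)$. Because $\textbf{F}_\mathscr{G}(\bar{x})$ is a linear IVF (part of the definition of $gH$-G\^{a}teaux differentiability), homogeneity gives $\textbf{F}_\mathscr{G}(\bar{x})(-h)=(-1)\odot\textbf{F}_\mathscr{G}(\bar{x})(h)$. Setting $\textbf{F}_\mathscr{G}(\bar{x})(h)=[\underline{t},\overline{t}]$, the first dominance yields $0\leq\underline{t}$ and $0\leq\overline{t}$, whereas $\textbf{0}\preceq(-1)\odot[\underline{t},\overline{t}]=[-\overline{t},-\underline{t}]$ yields $\overline{t}\leq 0$ and $\underline{t}\leq 0$. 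Therefore $\underline{t}=\overline{t}=0$, i.e.\ $\textbf{F}_\mathscr{G}(\bar{x})(h)=\textbf{0}$; since $h$ was arbitrary, the conclusion follows.

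The step I expect to be most delicate is the passage $\textbf{F}_\mathscr{G}(\bar{x})(-h)=(-1)\odot\textbf{F}_\mathscr{G}(\bar{x})(h)$: since the $gH$-G\^{a}teaux derivative is defined through the one-sided limit $\lambda\to 0+$, the values at $h$ and at $-h$ are a priori independent one-sided quantities, and only the linearity clause of the definition links them. I would thus invoke linearity explicitly (following \cite{ghosh2020generalized}) rather than attempting to merge the two one-sided limits directly. It is also worth noting in passing that the very existence of the derivative forces $\textbf{F}(\bar{x})\in I(\mathbb{R})$, so the $gH$-differences used above are genuinely well defined.
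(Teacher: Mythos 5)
Your proof is correct, but it takes a genuinely different route from the paper's. The paper does not argue directly at all: it observes that any $\bar{x}\in\argmin_{x\in\mathcal{X}}\textbf{F}(x)$ is in particular an efficient point, and then defers entirely to the proof of Theorem 4.2 in \cite{ghosh2020generalized}, which gives the necessary condition at efficient points of $gH$-G\^{a}teaux differentiable IVFs. You instead give a self-contained first-order argument: the argmin hypothesis yields the \emph{total} dominance $\textbf{F}(\bar{x})\preceq\textbf{F}(\bar{x}+\lambda h)$ for all $\lambda>0$ and all $h$ (strictly stronger than efficiency, which only rules out points $y$ with $\textbf{F}(y)\prec\textbf{F}(\bar{x})$), and from the explicit formula $\textbf{A}\ominus_{gH}\textbf{B}=[\min\{\underline{a}-\underline{b},\overline{a}-\overline{b}\},\max\{\underline{a}-\underline{b},\overline{a}-\overline{b}\}]$ this makes $\textbf{0}\preceq\frac{1}{\lambda}\odot\left(\textbf{F}(\bar{x}+\lambda h)\ominus_{gH}\textbf{F}(\bar{x})\right)$ immediate; passing to the limit endpoint-wise is legitimate because $\lVert\cdot\ominus_{gH}\cdot\rVert_{I(\mathbb{R})}$ is exactly the maximum of the endpoint distances (as the paper's Note on sequences records, with $\iff$ in both directions), and playing $h$ against $-h$ through the homogeneity $\textbf{F}_\mathscr{G}(\bar{x})(-h)=(-1)\odot\textbf{F}_\mathscr{G}(\bar{x})(h)$ pins both endpoints of $\textbf{F}_\mathscr{G}(\bar{x})(h)$ to zero. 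You rightly flag the one delicate step: the $gH$-G\^{a}teaux derivative is a one-sided limit, so the values at $h$ and $-h$ are linked only by the linearity clause of the definition, which the theorem's hypothesis supplies. What each approach buys: yours gives the paper a proof readable without consulting the external reference, at the cost of exploiting the stronger argmin (global dominance) property; the paper's one-line reduction leans on a result proved under the weaker efficiency hypothesis, so it covers a broader situation but leaves the mechanism opaque. Your closing remark that existence of the derivative forces $\textbf{F}(\bar{x})\in I(\mathbb{R})$ (and, for small $\lambda$, finiteness of the difference quotients), so the $gH$-differences are well defined even though $\textbf{F}$ is extended-valued, is a point of hygiene the paper's citation-proof passes over silently.
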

	\begin{proof}
		Observe that  any $\bar{x}\in\argmin_{x\in\mathcal{X}}\textbf{F}(x)$, is also an efficient point. Then, the proof follows from proof of the Theorem 4.2 in \cite{ghosh2020generalized}.
	\end{proof}

	\section{Ekeland's Variational Principle and its Applications}\label{sec5}
	In this section, we present the main results\textemdash Ekeland's variational principle for IVFs along with its application for $gH$-G\^{a}teaux differentiable IVFs.
	
	\begin{lemma}\label{g84}
		Let $\bar{x}\in \mathcal{X}$ and $\textbf{A}\in I(\mathbb{R})$. Then,  $\{x\in\mathcal{X}:\textbf{A}\nprec\lVert x-\bar{x}\rVert_{\mathcal{X}}\}$ is a bounded set.
	\end{lemma}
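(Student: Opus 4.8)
The plan is to show that the set $S=\{x\in\mathcal{X}:\textbf{A}\nprec\lVert x-\bar{x}\rVert_{\mathcal{X}}\}$ is contained in a closed ball centered at $\bar{x}$, which is automatically bounded. First I would unpack the mixed relation $\textbf{A}\nprec r$ for the nonnegative real $r=\lVert x-\bar{x}\rVert_{\mathcal{X}}$ using Definition \ref{g99}: by the Remark following that definition, $\textbf{A}\nprec r$ is exactly the negation of $\textbf{A}\prec[r,r]$. Writing $\textbf{A}=[\underline{a},\overline{a}]$, the key observation is that $\textbf{A}\prec[r,r]$ certainly holds whenever $\underline{a}<r$ and $\overline{a}<r$, since this is Case 3 of Definition \ref{g99}(ii).

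Next I would exploit the elementary fact $\underline{a}\leq\overline{a}$. If $r>\overline{a}$, then $\underline{a}\leq\overline{a}<r$, so both strict inequalities $\underline{a}<r$ and $\overline{a}<r$ hold, whence $\textbf{A}\prec[r,r]$, i.e. $\textbf{A}\prec r$. Equivalently, any $x$ with $\lVert x-\bar{x}\rVert_{\mathcal{X}}>\overline{a}$ fails to lie in $S$. Taking the contrapositive gives
\[
S\subseteq\{x\in\mathcal{X}:\lVert x-\bar{x}\rVert_{\mathcal{X}}\leq\overline{a}\}=\overline{B}_{\overline{a}}(\bar{x}),
\]
the closed ball of radius $\overline{a}$ about $\bar{x}$; hence $S$ is bounded. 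If $\overline{a}<0$ this ball is empty and $S=\emptyset$, which is still bounded, so no separate case analysis is required.

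The argument is elementary, and the only real care needed is in correctly translating the interval-versus-real relation $\nprec$ through Definition \ref{g99}(iv) and its accompanying Remark; once that translation is made, the choice of the right endpoint $\overline{a}$ of $\textbf{A}$ as the bounding radius is immediate from $\underline{a}\leq\overline{a}$. I therefore expect no substantive obstacle beyond this bookkeeping.
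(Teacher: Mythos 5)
Your proof is correct and follows essentially the same route as the paper: both unpack the relation $\textbf{A}\nprec\lVert x-\bar{x}\rVert_{\mathcal{X}}$ via Definition \ref{g99} and bound the set by the right endpoint $\overline{a}$, the paper by explicitly enumerating the cases to describe the set as $\{x:\lVert x-\bar{x}\rVert_{\mathcal{X}}\leq\underline{a}~\text{or}~\underline{a}<\lVert x-\bar{x}\rVert_{\mathcal{X}}<\overline{a}\}$, you by the contrapositive observation that $\lVert x-\bar{x}\rVert_{\mathcal{X}}>\overline{a}$ forces $\textbf{A}\prec\lVert x-\bar{x}\rVert_{\mathcal{X}}$. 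Your version is a slightly more economical packaging of the same case analysis, and your handling of the vacuous case $\overline{a}<0$ is a harmless extra.
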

	\begin{proof}
		Let $\textbf{A}=[\underline{a},\overline{a}].$ Then,
		\begin{eqnarray*}
		&&\{x\in\mathcal{X}:\textbf{A}\nprec\lVert x-\bar{x}\rVert_{\mathcal{X}}\}\\&=&
			\{x\in\mathcal{X}:[\underline{a},\overline{a}]\nprec\lVert x-\bar{x}\rVert_{\mathcal{X}}\}\\&=&\{x\in\mathcal{X}:\lVert x-\bar{x}\rVert_{\mathcal{X}}\preceq [\underline{a},\overline{a}]~\text{or}~`[\underline{a},\overline{a}]~\text{and}~\Vert x-\bar{x}\rVert_{\mathcal{X}}~\text{are not comparable'}\}\\&=&\{x\in\mathcal{X}:`\lVert x-\bar{x} \rVert_{\mathcal{X}}\leq\underline{a}~\text{and}~\lVert x-\bar{x} \rVert_{\mathcal{X}}\leq\overline{a}\text{'}\\&&~~~~~~~~~~~~~~~~\text{or}~`[\underline{a},\overline{a}]~\text{and}~ \lVert x-\bar{x}\rVert_{\mathcal{X}}~\text{are not comparable'}\}\\&=&\{x\in\mathcal{X}:`\lVert x-\bar{x} \rVert_{\mathcal{X}}\leq\underline{a}\text{'}~\text{or}~`\lVert x-\bar{x} \rVert_{\mathcal{X}}<\underline{a}~\text{and}~\lVert x-\bar{x} \rVert_{\mathcal{X}}>\overline{a}\text{'}\\&&~~~~~~~~~~~~~~~~\text{or}~`\lVert x-\bar{x} \rVert_{\mathcal{X}}>\underline{a}~\text{and}~\lVert x-\bar{x} \rVert_{\mathcal{X}}<\overline{a}\text{'}\}\\&=&\{x\in\mathcal{X}:\lVert x-\bar{x} \rVert_{\mathcal{X}}\leq\underline{a}~\text{or}~\underline{a}<\lVert x-\bar{x} \rVert_{\mathcal{X}}<\overline{a}\},
		\end{eqnarray*}
		which is a bounded set.\\
		Hence, for any $\bar{x}\in\mathcal{X}$ and $\textbf{A}\in I(\mathbb{R})$, $\{x\in\mathcal{X}:\textbf{A}\nprec\lVert x-\bar{x}\rVert_{\mathcal{X}}\}$ is bounded.
	\end{proof}
	\begin{theorem}\label{g76}\emph{(Ekeland's variational principle for IVFs)}. Let $\textbf{F}:\mathcal{X}\rightarrow I(\mathbb{R})\cup \{+\infty\}$ be a $gH$-lsc extended IVF and $\epsilon>0$. Assume that \[\inf\limits_{\mathcal{X}}\textbf{F}~\text{is finite and}~\textbf{F}(\bar{x})\prec\inf\limits_{\mathcal{X}}\textbf{F}\oplus[\epsilon,\epsilon].\]
		Then, for any $\delta>0$, there exists an $x_0\in\mathcal{X}$ such that
		\begin{enumerate}[(i)]
			\item $\lVert x_0-\bar{x}\rVert_{\mathcal{X}}<\frac{\epsilon}{\delta}$,
			\item $\textbf{F}(x_0)\preceq\textbf{F}(\bar{x})$, and
			\item $\argmin\limits_{x\in\mathcal{X}}\{\textbf{F}(x)\oplus\delta\lVert x-x_0\rVert_{\mathcal{X}}\}=\{x_0\}.$
		\end{enumerate}
	\end{theorem}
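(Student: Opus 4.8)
The plan is to mirror the classical proof of Ekeland's principle, replacing the real order by the dominance order $\preceq$ and extracting a suitable minimal point. Writing $\textbf{F}(x)=\left[\underline{f}(x),\overline{f}(x)\right]$ and putting $\inf_{\mathcal{X}}\textbf{F}=[\underline{m},\overline{m}]$, Note \ref{g75} lets me read the $gH$-lsc hypothesis as lower semicontinuity of the two endpoint functions $\underline{f},\overline{f}$. First I would introduce the relation on $\mathcal{X}$ defined by
\[
x\preceq_{\delta}y\iff\textbf{F}(x)\oplus\delta\lVert x-y\rVert_{\mathcal{X}}\preceq\textbf{F}(y),
\]
and verify that it is a partial order: reflexivity is immediate; antisymmetry follows by adding the two defining inequalities and using $\delta>0$ to force $\lVert x-y\rVert_{\mathcal{X}}=0$; and transitivity follows from the triangle inequality in $\mathcal{X}$ together with the monotonicity of $\oplus$ under $\preceq$ in (\ref{2}) of Lemma \ref{g23}.

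Next I would run the standard descending construction. Put $x_1=\bar{x}$ and, given $x_n$, set $S_n=\{x\in\mathcal{X}:x\preceq_{\delta}x_n\}$. Each $S_n$ is nonempty (it contains $x_n$) and closed: since the penalty $[\delta\lVert x-x_n\rVert_{\mathcal{X}},\delta\lVert x-x_n\rVert_{\mathcal{X}}]$ is $gH$-continuous, the map $\textbf{F}\oplus\delta\lVert\cdot-x_n\rVert_{\mathcal{X}}$ is $gH$-lsc by Theorem \ref{g83}, so its endpoint functions are lsc and $S_n$ is the intersection of two closed scalar sublevel sets. Moreover $x\in S_n$ forces $\delta\lVert x-x_n\rVert_{\mathcal{X}}\le\overline{f}(x_n)-\overline{m}$, which is finite because $\inf_{\mathcal{X}}\textbf{F}$ is finite; hence $S_n$ is bounded and therefore compact in the finite-dimensional space $\mathcal{X}$. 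I would then pick $x_{n+1}\in S_n$ nearly minimizing the scalarization $w=\underline{f}+\overline{f}$, say $w(x_{n+1})\le\inf_{S_n}w+\eta_n$ with $\eta_n\downarrow0$. The decisive computation is that for $x\in S_{n+1}\subseteq S_n$, adding the two endpoint inequalities encoded in $x\preceq_{\delta}x_{n+1}$ yields $w(x)+2\delta\lVert x-x_{n+1}\rVert_{\mathcal{X}}\le w(x_{n+1})\le\inf_{S_n}w+\eta_n\le w(x)+\eta_n$, so $\lVert x-x_{n+1}\rVert_{\mathcal{X}}\le\eta_n/(2\delta)$. The $S_n$ are thus nested, nonempty, compact, with diameters tending to $0$, so $\bigcap_n S_n=\{x_0\}$ for a unique $x_0$.

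It remains to read off the three conclusions. Since $x_0\in S_1$, i.e. $x_0\preceq_{\delta}\bar{x}$, the nonnegativity of the penalty gives $\textbf{F}(x_0)\preceq\textbf{F}(x_0)\oplus\delta\lVert x_0-\bar{x}\rVert_{\mathcal{X}}\preceq\textbf{F}(\bar{x})$, which is (ii); and the two endpoint inequalities together with $\underline{f}(x_0)\ge\underline{m}$, $\overline{f}(x_0)\ge\overline{m}$ bound $\delta\lVert x_0-\bar{x}\rVert_{\mathcal{X}}$ by $\min\{\underline{f}(\bar{x})-\underline{m},\overline{f}(\bar{x})-\overline{m}\}$, which is strictly below $\epsilon$ because $\textbf{F}(\bar{x})\prec\inf_{\mathcal{X}}\textbf{F}\oplus[\epsilon,\epsilon]$ forces both endpoint gaps to be $\le\epsilon$ and not both equal to $\epsilon$; this is (i). For (iii) I would first establish minimality: if $x\preceq_{\delta}x_0$ then transitivity places $x$ in every $S_n$, so $x\in\bigcap_n S_n=\{x_0\}$, whence no $x\neq x_0$ satisfies $\textbf{F}(x)\oplus\delta\lVert x-x_0\rVert_{\mathcal{X}}\preceq\textbf{F}(x_0)$.

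I expect the passage from this minimality to the stated identity $\argmin_{x\in\mathcal{X}}\{\textbf{F}(x)\oplus\delta\lVert x-x_0\rVert_{\mathcal{X}}\}=\{x_0\}$ to be the main obstacle. Because $\preceq$ is only a partial order, the scalar slogan ``no strictly smaller point exists, hence $x_0$ is the unique minimizer'' does not transfer verbatim: the negation of $\textbf{F}(x)\oplus\delta\lVert x-x_0\rVert_{\mathcal{X}}\preceq\textbf{F}(x_0)$ is a disjunction on the two endpoints, whereas membership of $x_0$ in the argmin demands that $x_0$ minimize \emph{both} $\underline{f}+\delta\lVert\cdot-x_0\rVert_{\mathcal{X}}$ and $\overline{f}+\delta\lVert\cdot-x_0\rVert_{\mathcal{X}}$ simultaneously and that the two componentwise infima in Remark \ref{g54} be attained only at $x_0$. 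The intended route is to exploit that the penalty is the \emph{diagonal} interval $[\delta\lVert\cdot-x_0\rVert_{\mathcal{X}},\delta\lVert\cdot-x_0\rVert_{\mathcal{X}}]$, which is order-compatible with the dominance relation, and to combine the minimality of $x_0$ with the defining $\preceq_{\delta}$-inequalities, translating the resulting $\npreceq$/$\nprec$ assertions through Lemma \ref{g28}; this reconciliation is the step I would scrutinize most carefully.
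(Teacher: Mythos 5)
Your order-theoretic machinery is sound as far as it goes: the relation $\preceq_{\delta}$ is indeed a partial order, the sets $S_n$ are nonempty, closed, bounded (hence compact, since $\mathcal{X}$ is finite dimensional), your diameter estimate via the scalarization $w=\underline{f}+\overline{f}$ is correct, and your derivations of (i) and (ii) from $x_0\preceq_{\delta}\bar{x}$ are fine (including the observation that strict dominance in $\textbf{F}(\bar{x})\prec\inf_{\mathcal{X}}\textbf{F}\oplus[\epsilon,\epsilon]$ makes the smaller endpoint gap strictly less than $\epsilon$). But the obstacle you flag at the end is not a technicality to be ``scrutinized''\textemdash it is a genuine gap that your construction cannot close. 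Under the paper's definition of $\argmin$ (via the componentwise infimum of Remark~\ref{g54}), conclusion (iii) requires $x_0$ to attain \emph{both} endpoint infima of $\textbf{F}(\cdot)\oplus\delta\lVert \cdot-x_0\rVert_{\mathcal{X}}$ simultaneously, i.e.\ $\textbf{F}(x_0)\preceq\textbf{F}(x)\oplus\delta\lVert x-x_0\rVert_{\mathcal{X}}$ for \emph{all} $x$; your argument delivers only $\preceq_{\delta}$-minimality, which for a partial order is strictly weaker. Concretely, take $\mathcal{X}=\mathbb{R}$, $\underline{f}\equiv 0$, and $\overline{f}$ continuous with $\overline{f}(x)=1$ for $x\leq\bar{x}$, decreasing with slope $-2\delta$ on $\left[\bar{x},\bar{x}+\frac{1}{2\delta}\right]$, and $\overline{f}\equiv 0$ beyond. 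Then $\textbf{F}=[\underline{f},\overline{f}]$ is $gH$-lsc, $\inf_{\mathcal{X}}\textbf{F}=[0,0]$ is finite, and the hypothesis holds with $\epsilon=2$; but $x\preceq_{\delta}\bar{x}$ forces $\delta\lvert x-\bar{x}\rvert\leq\underline{f}(\bar{x})-\underline{f}(x)=0$, so $S_1=\{\bar{x}\}$ and your iteration stalls at $x_0=\bar{x}$. This point is $\preceq_{\delta}$-minimal, yet $\overline{f}(\bar{x})=1>\frac{1}{2}=\inf_{x}\left\{\overline{f}(x)+\delta\lvert x-\bar{x}\rvert\right\}$, so $x_0$ fails to attain the upper-endpoint infimum and (iii) is false for it (in this example the theorem's conclusion is realized only by points $x_0\geq\bar{x}+\frac{1}{2\delta}$, which your descending scheme never reaches). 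So no amount of translating $\npreceq$/$\nprec$ through Lemma~\ref{g28} can bridge the gap; the construction itself must change.

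The paper's route is structurally different precisely at this point. It penalizes at $\bar{x}$ once and for all: setting $\overline{\textbf{F}}=\textbf{F}\oplus\delta\lVert\cdot-\bar{x}\rVert_{\mathcal{X}}$, it shows $\overline{\textbf{F}}$ is $gH$-lsc (Theorem~\ref{g83}), proper, and level-bounded (via Lemma~\ref{g84}), and invokes the attainment result Theorem~\ref{g85} to obtain the nonempty compact set $C=\argmin_{\mathcal{X}}\overline{\textbf{F}}$; it then minimizes $\textbf{F}\oplus\delta_{C}$ to select $x_0\in C$ minimizing $\textbf{F}$ over $C$. Because $x_0\in C$ means $\overline{\textbf{F}}(x_0)$ \emph{equals} the componentwise infimum of $\overline{\textbf{F}}$, one gets $\overline{\textbf{F}}(x_0)\preceq\overline{\textbf{F}}(x)$ for every $x\in\mathcal{X}$\textemdash total comparability, which is exactly what your minimal point lacks\textemdash and from this the strict inequality $\textbf{F}(x_0)\prec\textbf{F}(x)\oplus\delta\lVert x-x_0\rVert_{\mathcal{X}}$ for $x\neq x_0$ and hence (iii) follow. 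The moral is that the simultaneous attainment of both endpoint infima is an existence input (Theorem~\ref{g85}) that must be imported; any repair of your iterative argument would have to control the two endpoint functions separately rather than only through the dominance order and the single scalarization $w$.
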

	\begin{proof}
		Let $\bar{\boldsymbol{\alpha}}=\inf\limits_{\mathcal{X}}\textbf{F}$ and $\overline{\textbf{F}}(x)=\textbf{F}(x)\oplus\delta\lVert x-\bar{x}\rVert_{\mathcal{X}}.$ \\
		
		Since $\overline{\textbf{F}}$ is the sum of two $gH$-lsc and proper IVFs, $\overline{\textbf{F}}$ is $gH$-lsc by Theorem \ref{g83}. Also,
		\begin{eqnarray*}
		 \text{lev}_{\boldsymbol{\alpha}\nprec}\overline{\textbf{F}}&=&\left\{x\in\mathcal{X}:\boldsymbol{\alpha}\nprec\overline{\textbf{F}}(x)\right\}\\&=&\left\{x\in\mathcal{X}:\boldsymbol{\alpha}\nprec\textbf{F}(x)\oplus\delta\lVert x-\bar{x}\rVert_{\mathcal{X}}\right\}\\&\subseteq&\left\{x\in\mathcal{X}:\boldsymbol{\alpha}\nprec\bar{\boldsymbol{\alpha}}\oplus\delta\lVert x-\bar{x}\rVert_{\mathcal{X}}\right\}\\&=&\left\{x\in\mathcal{X}:\frac{\boldsymbol{\alpha}\ominus_{gH}\bar{\boldsymbol{\alpha}}}{\delta}\nprec\lVert x-\bar{x}\rVert_{\mathcal{X}}\right\}\\&=&\left\{x\in\mathcal{X}:\textbf{A}\nprec \lVert x-\bar{x}\rVert_{\mathcal{X}}\right\},~\text{where}~ \textbf{A}=\frac{\boldsymbol{\alpha}\ominus_{gH}\bar{\boldsymbol{\alpha}}}{\delta}.
		\end{eqnarray*}
		Therefore, by Lemma \ref{g84}, $\overline{\textbf{F}}$ is level-bounded. Clearly, $\overline{\textbf{F}}$ is proper. Hence, by Theorem \ref{g85}, $C=\argmin_{\mathcal{X}}\overline{\textbf{F}}$ is nonempty and compact. \\

		Let us consider the function $\tilde{\textbf{F}}=\textbf{F}\oplus\delta_{C}$ on $\mathcal{X}$. Note that $\tilde{\textbf{F}}$ is proper and level-bounded. Since $C$ is nonempty and compact, so by Remark \ref{g86}, $\delta_{C}$ is $gH$-lsc.
		Thus, by Theorem \ref{g83},  $\tilde{\textbf{F}}$ is $gH$-lsc, and hence by Theorem \ref{g85}, $\argmin_{\mathcal{X}}\tilde{\textbf{F}}$ is nonempty.  \\

		Let $x_0\in \argmin_{\mathcal{X}}\tilde{\textbf{F}}$. Then, over the set $C$, $\textbf{F}$ is minimum  at $x_0$. \\

 Since $x_0\in C$, $\overline{\textbf{F}}(x_0)\prec\overline{\textbf{F}}(x)$ for $x\notin C$. This implies that for any $x\notin C$,
\begin{eqnarray*}
&&\textbf{F}(x_0)\oplus\delta\lVert x_0-\bar{x}\rVert_{\mathcal{X}}\prec\textbf{F}(x)\oplus\delta\lVert x-\bar{x}\rVert_{\mathcal{X}}\\		
 &\implies& \textbf{F}(x_0)\prec\textbf{F}(x)\oplus\delta\lVert x-\bar{x}\rVert_{\mathcal{X}}\ominus_{gH}\delta\lVert x_0-\bar{x}\rVert_{\mathcal{X}}.
\end{eqnarray*}
Hence, $\textbf{F}(x_0)\prec\textbf{F}(x)\oplus\delta\lVert x-x_0\rVert_{\mathcal{X}}$ for all $x\notin C$ with $x\neq x_0$, and thus $\argmin_{x\in\mathcal{X}}\{\textbf{F}(x)\oplus\delta\lVert x-x_0\rVert_{\mathcal{X}}\}= \{x_0\}$.\\

		Also, as $x_0\in C$, we have $\overline{\textbf{F}}(x_0)\preceq\overline{\textbf{F}}(\bar{x})$, which implies
		\begin{eqnarray*}
		&&\overline{\textbf{F}}(x_0)\preceq\textbf{F}(\bar{x}) ~\text{because}~ \overline{\textbf{F}}(\bar{x})=\textbf{F}(\bar{x})\\
		&\implies&\textbf{F}(x_0)\oplus\delta\lVert  x_0-\bar{x}\rVert_{\mathcal{X}}\preceq\textbf{F}(\bar{x})\\&\implies&\textbf{F}(x_0)\preceq\textbf{F}(\bar{x})\ominus_{gH}\delta\lVert x_0-\bar{x}\rVert_{\mathcal{X}}\\&\implies&\textbf{F}(x_0)\prec\bar{\boldsymbol{\alpha}}\oplus[\epsilon,\epsilon]\ominus_{gH}\delta\lVert x_0-\bar{x}\rVert_{\mathcal{X}}~ \text{because}~ \textbf{F}(\bar{x})\prec\inf\limits_\mathcal{X}\textbf{F}\oplus[\epsilon,\epsilon]\\&\implies&\delta\lVert x_0-\bar{x}\rVert_{\mathcal{X}}\prec\bar{\boldsymbol{\alpha}}\oplus[\epsilon,\epsilon]\ominus_{gH}\textbf{F}(x_0)\\&\implies&\delta\lVert x_0-\bar{x}\rVert_{\mathcal{X}}\prec[\epsilon,\epsilon]~ \text{because}~ \bar{\boldsymbol{\alpha}}\ominus_{gH}\textbf{F}(x_0)\preceq \textbf{0}\\&\implies& \lVert x_0-\bar{x}\rVert_{\mathcal{X}}<\frac{\epsilon}{\delta}.
		\end{eqnarray*}
		This completes the proof.
	\end{proof}
	Next, we give an application of Ekeland's variational principle for IVFs. In order to do that we need the concept of norm of a bounded linear IVF. By a bounded linear IVF (see \cite{ghosh2020generalized}), we mean a linear IVF $\textbf{G}:\mathcal{X}\rightarrow I(\mathbb{R})$ for which  there exists a nonnegative real number $C$ such that
		\[\lVert \textbf{G}(x)\lVert_{I(\mathbb{R})}\leq C\lVert x\rVert_{\mathcal{X}} ~\text{for all}~ x\in\mathcal{X}.\]

	In the next lemma, we introduce norm for a bounded linear IVF.
	\begin{lemma}(\emph{Norm of a bounded linear IVF}). Let $\textbf{G}:\mathcal{X}\rightarrow I(\mathbb{R})$ be a bounded linear IVF. Then,
		\begin{equation*}\label{g87}
		\lVert\textbf{G}\rVert=\sup_{
			\substack{x\in\mathcal{X}\\ \lVert x\rVert_{\mathcal{X}}=1}}\lVert\textbf{G}(x)\rVert_{I(\mathbb{R})}
		\end{equation*}
		is a norm on the set of all bounded linear IVFs on $\mathcal{X}.$
	\end{lemma}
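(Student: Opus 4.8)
The plan is to check that $\lVert\cdot\rVert$ satisfies the four axioms of a norm on the (quasilinear) space of bounded linear IVFs over $\mathcal{X}$: finiteness together with nonnegativity, positive definiteness, absolute homogeneity, and the triangle inequality. Each axiom reduces to a corresponding property of the interval norm $\lVert\cdot\rVert_{I(\mathbb{R})}$ combined with elementary facts about suprema, so the argument parallels the classical proof that the operator norm is a norm.

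First I would argue that $\lVert\textbf{G}\rVert$ is a well-defined nonnegative real number. By definition of a bounded linear IVF there is a constant $C\geq0$ with $\lVert\textbf{G}(x)\rVert_{I(\mathbb{R})}\leq C\lVert x\rVert_{\mathcal{X}}$ for all $x$; restricting to $\lVert x\rVert_{\mathcal{X}}=1$ shows the defining supremum is bounded above by $C$ and hence finite, while nonnegativity is immediate since $\lVert\cdot\rVert_{I(\mathbb{R})}\geq0$. For positive definiteness, one direction is trivial: the zero linear IVF sends every $x$ to $\textbf{0}$, so its norm is $0$. Conversely, if $\lVert\textbf{G}\rVert=0$ then $\textbf{G}(x)=\textbf{0}$ for every unit vector $x$, and homogeneity of $\textbf{G}$ (for $x\neq0$ write $x=\lVert x\rVert_{\mathcal{X}}\odot(x/\lVert x\rVert_{\mathcal{X}})$) propagates this to all of $\mathcal{X}$, giving $\textbf{G}=\textbf{0}$.

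For absolute homogeneity, the key auxiliary fact is that $\lVert\mu\odot\textbf{A}\rVert_{I(\mathbb{R})}=|\mu|\,\lVert\textbf{A}\rVert_{I(\mathbb{R})}$ for every scalar $\mu$ and interval $\textbf{A}$; this is checked directly from the definition of $\odot$, treating the cases $\mu\geq0$ and $\mu<0$ separately so that the reversal of endpoints is accounted for. Pulling $|\mu|$ out of the supremum then yields $\lVert\mu\odot\textbf{G}\rVert=|\mu|\,\lVert\textbf{G}\rVert$. Finally, the triangle inequality follows from (\ref{1}) of Lemma \ref{g23}: for each unit $x$ we have $\lVert\textbf{G}_1(x)\oplus\textbf{G}_2(x)\rVert_{I(\mathbb{R})}\leq\lVert\textbf{G}_1(x)\rVert_{I(\mathbb{R})}+\lVert\textbf{G}_2(x)\rVert_{I(\mathbb{R})}$, and taking the supremum while using subadditivity of $\sup$ gives $\lVert\textbf{G}_1\oplus\textbf{G}_2\rVert\leq\lVert\textbf{G}_1\rVert+\lVert\textbf{G}_2\rVert$. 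I expect no serious obstacle here; the only points demanding care are that the underlying set of bounded linear IVFs is only quasilinear rather than a genuine vector space\textemdash so one should verify that the operations $\oplus,\odot$ keep us within this set\textemdash and the case analysis on the sign of $\mu$ in the homogeneity step.
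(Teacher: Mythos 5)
Your proposal is correct and follows essentially the same route as the paper: verifying the norm axioms directly, with homogeneity obtained by factoring $|\gamma|$ out of the supremum and the triangle inequality obtained from part (\ref{1}) of Lemma \ref{g23} together with subadditivity of the supremum. In fact you supply slightly more detail than the paper does on finiteness of the supremum and on the positive-definiteness direction (extending $\textbf{G}(x)=\textbf{0}$ from the unit sphere to all of $\mathcal{X}$), which the paper simply asserts with ``observe that.''
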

	\begin{proof}
		Observe that $\lVert\textbf{G}\rVert\geq 0$ for any bounded linear IVF $\textbf{G}$ and $\lVert\textbf{G}\rVert=0$ if and only if $\textbf{G}=\textbf{0}$. Let $\gamma\in\mathbb{R}$. We see that
		\begin{eqnarray*}
			&&\lVert\gamma\odot\textbf{G}\rVert\\&=&\sup_{
				\substack{x\in\mathcal{X}\\ \lVert x\rVert_{\mathcal{X}}=1}}\lVert (\gamma\odot\textbf{G})(x)\rVert_{I(\mathbb{R})}=\sup_{
				\substack{x\in\mathcal{X}\\ \lVert x\rVert_{\mathcal{X}}=1}}\lvert\gamma\rvert\lVert\textbf{G}(x)\rVert_{I(\mathbb{R})}\\&=&\lvert\gamma\rvert\sup_{
				\substack{x\in\mathcal{X}\\ \lVert x\rVert_{\mathcal{X}}=1}}\lVert\textbf{G}(x)\rVert_{I(\mathbb{R})}=\lvert\gamma\rvert\lVert\textbf{G}\rVert.
		\end{eqnarray*}
		Further,
		\begin{eqnarray*}
			\lVert\mathbf{G_{1}}\oplus\mathbf{G_2}\rVert&=&\sup_{
				\substack{x\in\mathcal{X}\\ \lVert x\rVert_{\mathcal{X}}=1}}\lVert(\mathbf{G_1}\oplus\mathbf{G_2})(x)\rVert_{I(\mathbb{R})}\\&=&\sup_{
				\substack{x\in\mathcal{X}\\ \lVert x\rVert_{\mathcal{X}}=1}}\lVert\mathbf{G_1}(x)\oplus\mathbf{G_2}(x)\rVert_{I(\mathbb{R})}\\&\leq&\sup_{
				\substack{x\in\mathcal{X}\\ \lVert x\rVert_{\mathcal{X}}=1}}(\lVert\mathbf{G_1}(x)\rVert_{I(\mathbb{R})}+\lVert\mathbf{G_2}(x)\rVert_{I(\mathbb{R})}),~\text{by (\ref{1}) of Lemma \ref{g23}}\\&=&\sup_{
				\substack{x\in\mathcal{X}\\ \lVert x\rVert_{\mathcal{X}}=1}}\lVert\mathbf{G_1}(x)\rVert_{I(\mathbb{R})}+\sup_{
				\substack{x\in\mathcal{X}\\ \lVert x\rVert_{\mathcal{X}}=1}}\lVert\mathbf{G_2}(x)\rVert_{I(\mathbb{R})}\\&=&\lVert\mathbf{G_1}\rVert+\lVert\mathbf{G_2}\rVert.
		\end{eqnarray*}
		Hence, the result follows.
	\end{proof}
	\begin{theorem}
		Let $\textbf{G}:\mathcal{X}\rightarrow I(\mathbb{R})$ be a linear IVF. If $\textbf{G}$ is $gH$-continuous on $\mathcal{X}$, then $\textbf{G}$ is a bounded linear IVF.
	\end{theorem}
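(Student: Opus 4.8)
The plan is to mimic the classical functional-analytic argument that a continuous linear map between normed spaces is bounded, transcribing each step into the quasilinear structure of $I(\mathbb{R})$. First I would record the elementary consequences of linearity of $\textbf{G}$ that the argument needs: that $\textbf{G}(\mathbf{0}_{\mathcal{X}})=\textbf{0}$ (so that $gH$-continuity at the origin is continuity relative to the value $\textbf{0}$), and that $\textbf{G}(\mu x)=\mu\odot\textbf{G}(x)$ for every scalar $\mu$ and every $x\in\mathcal{X}$. Alongside these I would note the homogeneity of the interval norm, namely $\lVert\mu\odot\textbf{A}\rVert_{I(\mathbb{R})}=\lvert\mu\rvert\,\lVert\textbf{A}\rVert_{I(\mathbb{R})}$, which is immediate from the definition of $\odot$ together with $\lVert\textbf{A}\rVert_{I(\mathbb{R})}=\max\{|\underline{a}|,|\bar{a}|\}$.

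Next I would invoke $gH$-continuity of $\textbf{G}$ at the origin with the specific choice $\epsilon=1$. Because $\textbf{G}(\mathbf{0}_{\mathcal{X}})=\textbf{0}$, this produces a radius $\delta>0$ such that $\lVert\textbf{G}(x)\ominus_{gH}\textbf{0}\rVert_{I(\mathbb{R})}=\lVert\textbf{G}(x)\rVert_{I(\mathbb{R})}<1$ whenever $\lVert x\rVert_{\mathcal{X}}<\delta$. This is the local bound; the remaining work is to promote it to a global bound by a scaling argument.

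Then, given an arbitrary nonzero $x\in\mathcal{X}$, I would set $y=\tfrac{\delta}{2\lVert x\rVert_{\mathcal{X}}}\,x$, so that $\lVert y\rVert_{\mathcal{X}}=\delta/2<\delta$ and hence $\lVert\textbf{G}(y)\rVert_{I(\mathbb{R})}<1$. Using $\textbf{G}(y)=\tfrac{\delta}{2\lVert x\rVert_{\mathcal{X}}}\odot\textbf{G}(x)$ together with the positive homogeneity of the norm recorded above, this rearranges to $\lVert\textbf{G}(x)\rVert_{I(\mathbb{R})}<\tfrac{2}{\delta}\,\lVert x\rVert_{\mathcal{X}}$. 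Taking $C=2/\delta$, and observing that $\lVert\textbf{G}(x)\rVert_{I(\mathbb{R})}\le C\lVert x\rVert_{\mathcal{X}}$ holds trivially at $x=\mathbf{0}_{\mathcal{X}}$, I would conclude that $\textbf{G}$ is a bounded linear IVF.

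The only genuinely delicate point is the scaling step: it requires the scalar $\tfrac{\delta}{2\lVert x\rVert_{\mathcal{X}}}$ to be nonnegative so that $\odot$ passes through the norm as an exact equality rather than merely up to the sign-swapping of endpoints in the definition of $\odot$. Since this factor is strictly positive, the relevant branch of that definition applies and no case analysis survives. Everything else is a direct translation of the real-valued proof, relying on the fact that $\big(I(\mathbb{R}),\lVert\cdot\rVert_{I(\mathbb{R})}\big)$ behaves like a normed space under $\oplus$ and $\odot$.
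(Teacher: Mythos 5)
Your proof is correct, and it is essentially the paper's route made self-contained: the paper's own proof consists of reducing $gH$-continuity on $\mathcal{X}$ to $gH$-continuity at the zero vector and then citing Lemma 4.2 of \cite{ghosh2020generalized}, which encapsulates exactly the continuity-at-zero-implies-boundedness step that you carry out explicitly. Your scaling argument is the standard one and all the ingredients check out in the interval setting: $\textbf{G}(\mathbf{0}_{\mathcal{X}})=\textbf{0}$ follows from homogeneity with $\mu=0$, $\textbf{A}\ominus_{gH}[0,0]=\textbf{A}$ makes the $\epsilon=1$ step legitimate, and $\lVert\mu\odot\textbf{A}\rVert_{I(\mathbb{R})}=\lvert\mu\rvert\,\lVert\textbf{A}\rVert_{I(\mathbb{R})}$ holds exactly (for either sign of $\mu$, and in your case $\mu>0$), so the factor $\tfrac{\delta}{2\lVert x\rVert_{\mathcal{X}}}$ passes through the norm as an equality. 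Worth noting explicitly: your argument uses only the homogeneity half of linearity (the additivity $\textbf{G}(x+y)=\textbf{G}(x)\oplus\textbf{G}(y)$ from the cited definition is never needed), so in effect you prove the slightly stronger statement that any positively homogeneous IVF that is $gH$-continuous at the origin is bounded; the inlined proof is arguably preferable to the paper's citation, since the reader need not consult \cite{ghosh2020generalized} to verify the claim.
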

	\begin{proof}
		By the hypothesis, $\textbf{G}$ is $gH$-continuous at the zero vector of $\mathcal{X}$. Therefore, by Lemma 4.2 in \cite{ghosh2020generalized}, $\textbf{G}$ is a bounded linear IVF.
	\end{proof}
	As an application of Theorem \ref{g76}, we give a variational principle for $gH$-G\^{a}teaux differentiable IVFs.
\begin{theorem}\label{g96}\emph{(Variational principle for $gH$-G\^{a}teaux differentiable IVFs)}. Let $\textbf{F}:\mathcal{X}\rightarrow I(\mathbb{R})\cup \{+\infty\}$ be a $gH$-lsc and $gH$-G\^{a}teaux differentiable extended IVF, and $\epsilon>0$. Suppose that \[\inf\limits_{\mathcal{X}}\textbf{F}~ \text{is finite and}~ \textbf{F}(\bar{x})\prec\inf\limits_\mathcal{X}\textbf{F}\oplus[\epsilon,\epsilon].\]
		Then, for any $\delta>0$, there exists an $x_0\in\mathcal{X}$ such that
		\begin{enumerate}[(i)]
			\item\label{g89} $\lVert x_0-\bar{x}\rVert_{\mathcal{X}}<\frac{\epsilon}{\delta}$,
			\item \label{g90}$\textbf{F}(x_0)\preceq\textbf{F}(\bar{x})$, and
			\item $\lVert\textbf{F}_\mathscr{G}(x_0)\rVert\leq\delta.$
		\end{enumerate}
	\end{theorem}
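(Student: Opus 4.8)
The plan is to get $x_0$ straight from Ekeland's variational principle and then squeeze out the derivative bound (iii) from the minimality property that principle provides. First I would apply Theorem \ref{g76} to the same data $\textbf{F}$, $\epsilon$, $\bar{x}$ and $\delta$; this is legitimate because $\textbf{F}$ is $gH$-lsc, $\inf_{\mathcal{X}}\textbf{F}$ is finite and $\textbf{F}(\bar{x})\prec\inf_\mathcal{X}\textbf{F}\oplus[\epsilon,\epsilon]$. The theorem then hands me a point $x_0\in\mathcal{X}$ for which conclusions (i) and (ii) hold verbatim, together with the extra fact that $\argmin_{x\in\mathcal{X}}\{\textbf{F}(x)\oplus\delta\lVert x-x_0\rVert_{\mathcal{X}}\}=\{x_0\}$. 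So all that remains is to deduce the estimate (iii) on the $gH$-G\^{a}teaux derivative from this minimality.

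Writing $\textbf{H}(x)=\textbf{F}(x)\oplus\delta\lVert x-x_0\rVert_{\mathcal{X}}$, the fact that $x_0$ minimizes $\textbf{H}$ gives $\textbf{F}(x_0)=\textbf{H}(x_0)\preceq\textbf{H}(x)$ for every $x\in\mathcal{X}$. Taking $x=x_0+\lambda h$ for an arbitrary direction $h\in\mathcal{X}$ and $\lambda>0$, this reads $\textbf{F}(x_0)\preceq\textbf{F}(x_0+\lambda h)\oplus\delta\lambda\lVert h\rVert_{\mathcal{X}}$. Writing $\textbf{F}=[\underline{f},\overline{f}]$ and reading off the two endpoints of this dominance, both $\underline{f}(x_0+\lambda h)-\underline{f}(x_0)$ and $\overline{f}(x_0+\lambda h)-\overline{f}(x_0)$ are at least $-\delta\lambda\lVert h\rVert_{\mathcal{X}}$. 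Since the lower endpoint of $\textbf{F}(x_0+\lambda h)\ominus_{gH}\textbf{F}(x_0)$ is exactly the smaller of these two quantities, after dividing by $\lambda>0$ I conclude that the lower endpoint of $\tfrac{1}{\lambda}\odot\left(\textbf{F}(x_0+\lambda h)\ominus_{gH}\textbf{F}(x_0)\right)$ is $\geq-\delta\lVert h\rVert_{\mathcal{X}}$.

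Next I would let $\lambda\downarrow 0$. Because $gH$-G\^{a}teaux differentiability guarantees this difference quotient converges in $\lVert\cdot\rVert_{I(\mathbb{R})}$, hence endpointwise, to $\textbf{F}_\mathscr{G}(x_0)(h)$, the lower endpoint of $\textbf{F}_\mathscr{G}(x_0)(h)$ is $\geq-\delta\lVert h\rVert_{\mathcal{X}}$; the same bound then holds with $-h$ in place of $h$. Now I invoke that $\textbf{F}_\mathscr{G}(x_0)$ is a linear IVF, hence homogeneous, so $\textbf{F}_\mathscr{G}(x_0)(-h)=(-1)\odot\textbf{F}_\mathscr{G}(x_0)(h)$: writing $\textbf{F}_\mathscr{G}(x_0)(h)=[c,d]$ we get $\textbf{F}_\mathscr{G}(x_0)(-h)=[-d,-c]$, so the lower-endpoint bound applied to $-h$ becomes $-d\geq-\delta\lVert h\rVert_{\mathcal{X}}$, that is $d\leq\delta\lVert h\rVert_{\mathcal{X}}$. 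Combining, $-\delta\lVert h\rVert_{\mathcal{X}}\leq c\leq d\leq\delta\lVert h\rVert_{\mathcal{X}}$, whence $\lVert\textbf{F}_\mathscr{G}(x_0)(h)\rVert_{I(\mathbb{R})}=\max\{|c|,|d|\}\leq\delta\lVert h\rVert_{\mathcal{X}}$. Taking the supremum over $\lVert h\rVert_{\mathcal{X}}=1$ yields $\lVert\textbf{F}_\mathscr{G}(x_0)\rVert\leq\delta$, which is (iii).

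The routine parts (checking the applicability of Theorem \ref{g76}, and reading endpoints off a dominance relation) are straightforward. The step I expect to be the main obstacle is the passage from the one-sided lower-endpoint estimate to the two-sided norm bound: this is precisely where the homogeneity of the linear IVF $\textbf{F}_\mathscr{G}(x_0)$ must be used to control the upper endpoint, and where one must be careful that scaling by $\tfrac{1}{\lambda}$ commutes with $\ominus_{gH}$ so that the limit of the lower endpoints equals the lower endpoint of the limit $\textbf{F}_\mathscr{G}(x_0)(h)$.
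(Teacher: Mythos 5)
Your proposal is correct and follows essentially the same route as the paper's proof: apply Theorem \ref{g76} to get $x_0$ satisfying (i), (ii) and $\argmin_{x\in\mathcal{X}}\{\textbf{F}(x)\oplus\delta\lVert x-x_0\rVert_{\mathcal{X}}\}=\{x_0\}$, bound the directional difference quotient below by $-\delta\lVert h\rVert_{\mathcal{X}}$, and let the step size tend to $0+$. Your final step is in fact more careful than the paper's, which just says ``taking the infimum on both sides over all $h$ with $\lVert h\rVert_{\mathcal{X}}=1$''; your explicit use of the homogeneity $\textbf{F}_\mathscr{G}(x_0)(-h)=(-1)\odot\textbf{F}_\mathscr{G}(x_0)(h)$ to convert the one-sided lower-endpoint estimate into the two-sided bound $-\delta\lVert h\rVert_{\mathcal{X}}\leq c\leq d\leq\delta\lVert h\rVert_{\mathcal{X}}$ is precisely the justification that terse step needs, since a lower bound on an interval alone does not control $\max\{|c|,|d|\}$.
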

	\begin{proof}
		By Theorem \ref{g76}, there exists an $x_0\in\mathcal{X}$ that satisfies (\ref{g89}) and (\ref{g90}), and $x_0\in \argmin_{x\in\mathcal{X}}\{\textbf{F}(x)\oplus\delta\lVert x-x_0\rVert_{\mathcal{X}}\}$. Therefore, $\textbf{F}(x_0)\preceq\textbf{F}(x)\oplus\delta\lVert x-x_0\rVert_{\mathcal{X}}$ and hence
		\begin{equation}\label{g91}
		\textbf{F}(x_0)\ominus_{gH}\delta\lVert x-x_0\rVert_{\mathcal{X}}\preceq\textbf{F}(x).
		\end{equation}
		Take any $h\in\mathcal{X}$ and set $x=x_0+th$ in the equation (\ref{g91}) with $t>0$. Then, we get\[\textbf{F}(x_0)\ominus_{gH}\delta\lVert th\rVert_{\mathcal{X}}\preceq\textbf{F}(x_0+th).\]
		Thus, \[-\delta\lVert h\rVert_{\mathcal{X}}\preceq\frac{1}{t}\odot\left(\textbf{F}(x_0+t h)\ominus_{gH}\textbf{F}(x_0)\right).\]
		Letting $t\rightarrow 0+$, we get
		\[-\delta\lVert h\rVert_{\mathcal{X}}\preceq\textbf{F}_\mathscr{G}(x_0)(h).\]
		Taking the infimum on both sides over all $h\in \mathcal{X}$ with $\lVert h\rVert_{\mathcal{X}}=1$, we get
		\[-\delta \leq -\lVert\textbf{F}_\mathscr{G}(x_0)\rVert,~\text{or},~ \lVert\textbf{F}_\mathscr{G}(x_0)\rVert\leq \delta.\]
		This completes the proof.
	\end{proof}
	The importance of the Theorem \ref{g96} is that in the absence of points belonging to the set $\argmin_{x\in\mathcal{X}}\textbf{F}(x)$, we can capture a point $x_0$ that almost minimizes $\textbf{F}$. In other words, the equations $\textbf{F}(x_0)=\inf\limits_{\mathcal{X}}\textbf{F}$ and $\textbf{F}_\mathscr{G}(x_0)=\textbf{0}$ can be satisfied to any prescribed accuracy $\delta>0.$\\
	
\section{Discussion and Conclusion}\label{sec6}
In this article, the concept of $gH$-semicontinuity (Definitions \ref{g5} and \ref{g8}) has been introduced for IVFs. Their interrelation with $gH$-continuity has been shown (Theorem \ref{g44}). The concept of sequence of intervals is used to give a characterization of lower and upper limits of extended IVFs (Lemmas \ref{g20} and \ref{g50}). By using a characterization of $gH$-lower semicontinuity for IVFs (Theorem \ref{n1}), it has been reported that an extended $gH$-lsc, level-bounded and proper IVF always attains its minimum (Theorem \ref{g85}). A characterization of the set of argument minimum of an IVF has been provided with the help of $gH$-G\^{a}teaux differentiability (Theorem \ref{n2}). We have further presented Ekeland's variational principle for IVFs (Theorem \ref{g76}). The proposed Ekeland's variational principle has been applied to find variational principle for $gH$-G\^{a}teaux differentiable IVFs (Theorem \ref{g96}).\\

In this article, we have considered analyzing closed and bounded intervals and IVFs whose values are closed and bounded intervals. A future study can be performed for other types of intervals. The analysis for other types of intervals is important because if we do not restrict the study for closed and bounded intervals the supremum of a set of closed and bounded intervals may become an open interval. For instance, for $\textbf{S}=\left\{\left[1-\frac{1}{n},2-\frac{1}{n}\right]:n\in\mathbb{N}\right\}$, $\sup\textbf{S}=(1,2).$\\

Immediately in the next step, we shall consider to solve the following two problems as the applications of the proposed study.
\begin{enumerate}[\textbf{Problem} 1.]
    \item The applications of the proposed variational principles in control systems in imprecise or uncertain environment will be shown shortly. Study of a control system in imprecise environment eventually appears due to the incomplete information (e.g., demand for a product) or unpredictable changes (e.g., changes in the climate) in the system. The general control problem in an imprecise or uncertain environment that we shall consider to study is the following:
    \begin{align*}
        \min~&~\textbf{G}(x(T))\\
        \text{subject to}~&~\frac{dx}{dt} = \textbf{F}(t, x(t), u(t)), \\
        ~&~ x(0) = x_0\in C_0,~x(T)\in C_1,
    \end{align*}
    where $C_0$ and $C_1$ are closed subsets of ${\mathbb{R}}^n$; $x:[0,T]\rightarrow {\mathbb{R}}^n$ and $u:[0,T]\rightarrow K$ are state and control variables, respectively, for some metrizable subset $K$ of ${\mathbb{R}}^n$; $\textbf{F}:[0,T]\times {\mathbb{R}}^n\times K\rightarrow I(\mathbb{R})$ is a $gH$-continuous IVF and $\textbf{G}:{\mathbb{R}}^n\rightarrow I(\mathbb{R})$ is a $gH$-Fr\'{e}chet differentiable IVF. To solve this system, the procedure adopted by Clarke in \cite{clarke1976maximum} may be useful.
    \item  We shall attempt to give optimality conditions for the following IOP, where $\mathcal{X}$ and $\mathcal{Y}$ are finite dimensional Banach spaces, $C$ is a nonempty closed subset of $\mathcal{X}\times \mathcal{Y}$, and $S$ is a closed convex subset of $\mathcal{Y}$:
\begin{align*}
    \min~&~ \textbf{F}(x,y)\\
    \text{subject to}~&~\mathbf{g}_{i}(x, y)\preceq \textbf{0},~i= 1,2,\cdots, m,\\
    ~&~\mathbf{h}_{j}(x, y)= \textbf{0},~j= 1,2,\cdots, k,\\
    ~&~(x, y)\in C,\\
    ~&~	y\in S,~ \big\langle F(x, y), y-z\big\rangle \leq 0\,\text{ for
			all} ~z\in S,
\end{align*}
	where $\textbf{F}: \mathcal{X}\times \mathcal{Y}\rightarrow I({\mathbb{R}}), ~\mathbf{g}_{i}: \mathcal{X}\times \mathcal{Y}\rightarrow  I(\mathbb{R})\cup \{+\infty\},~ i= 1,2,\cdots, ~m,~ \mathbf{h}_{j}: \mathcal{X}\times \mathcal{Y}\rightarrow I(\mathbb{R})\cup \{+\infty\}, ~j= 1, 2, \cdots,~ k,~ F: \mathcal{X}\times \mathcal{Y}\rightarrow \mathcal{Y},~\text{and}~\big\langle F(x, y), y-z\big\rangle$ denotes an inner product of $F(x, y)$ and $y-z.$
\end{enumerate}
Also, with the help of the proposed Ekeland's variational principle, in future, we shall try to investigate the concept of weak sharp minima \cite{burke2002weak} for IVFs and use it for sensitivity analysis of IOPs. \\

In parallel to the research proposed on IVFs, the research of fuzzy-valued functions (FVFs) may be another interesting path for future study. We hope that some FVF results would be similarly obtained to this article.
	\appendix
	\section{Proof of the Lemma \ref{g23}}\label{g24}
	\begin{proof}
	$\\$
		\emph{Proof of} (\ref{1}). Let $\textbf{A}=[\underline{a},\overline{a}]$ and $\textbf{B}=[\underline{b},\overline{b}].$ Then,
		$$\lVert\textbf{A}\oplus \textbf{B}\rVert_{I(\mathbb{R})}=\lVert[\underline{a},\overline{a}]\oplus [\underline{b},\overline{b}]\rVert_{I(\mathbb{R})}=\lVert [\underline{a}+\underline{b},\overline{a}+\overline{b}]\rVert_{I(\mathbb{R})}=\max \{\lvert \underline{a}+\underline{b}\rvert,\lvert\overline{a}+\overline{b}\rvert\}.$$
		We now have the following two possible cases.
		\begin{enumerate}[$\bullet$ \textbf{Case} 1.]
			\item $\lVert\textbf{A}\oplus \textbf{B}\rVert_{I(\mathbb{R})}=\lvert \underline{a}+\underline{b}\rvert.$\\
			Since $\lvert \underline{a}+\underline{b}\rvert\leq \lvert \underline{a}\rvert+\lvert \underline{b}\rvert\leq \max\{\lvert\underline{a}\rvert,\lvert\overline{a}\rvert\}+\max\{\lvert\underline{b}\rvert,\lvert\overline{b}\rvert\}=\lVert\textbf{A}\rVert_{I(\mathbb{R})}+\lVert\textbf{B}\rVert_{I(\mathbb{R})},$\\
			we get $\lVert\textbf{A}\oplus \textbf{B}\rVert_{I(\mathbb{R})}\leq\lVert\textbf{A}\rVert_{I(\mathbb{R})}+\lVert\textbf{B}\rVert_{I(\mathbb{R})}.$
			\item $\lVert\textbf{A}\oplus \textbf{B}\rVert_{I(\mathbb{R})}=\lvert \overline{a}+\overline{b}\rvert.$\\
			Since $\lvert \overline{a}+\overline{b}\rvert\leq \lvert \overline{a}\rvert+\lvert \overline{b}\rvert\leq\max\{\lvert\underline{a}\rvert,\lvert\overline{a}\rvert\}+\max\{\lvert\underline{b}\rvert,\lvert\overline{b}\rvert\}=\lVert\textbf{A}\rVert_{I(\mathbb{R})}+\lVert\textbf{B}\rVert_{I(\mathbb{R})},$\\
			therefore,  $\lVert\textbf{A}\oplus \textbf{B}\rVert_{I(\mathbb{R})}\leq\lVert\textbf{A}\rVert_{I(\mathbb{R})}+\lVert\textbf{B}\rVert_{I(\mathbb{R})}.$
		\end{enumerate}
		Hence, $\lVert \textbf{A}\oplus \textbf{B}\rVert_{I(\mathbb{R})}\leq \lVert\textbf{A}\rVert_{I(\mathbb{R})}+\lVert\textbf{B}\rVert_{I(\mathbb{R})}~\text{for all}~\textbf{A},~\textbf{B}\in I(\mathbb{R}).$\\ \\
		\emph{Proof of} (\ref{2}). Let $\textbf{A}=[\underline{a},\overline{a}],~\textbf{B}=[\underline{b},\overline{b}],~\textbf{C}=[\underline{c},\overline{c}]$ and $\textbf{D}=[\underline{d},\overline{d}]$.\\
		We note that
		\begin{equation}\label{g3}
		\textbf{A}\preceq \textbf{C}\implies [\underline{a},\overline{a}]\preceq [\underline{c},\overline{c}]\implies \underline{a}\leq \underline{c}~\text{and}~ \overline{a}\leq \overline{c}.
		\end{equation}\\
		Also,
		\begin{equation}\label{g4}
		\textbf{B}\preceq \textbf{D}\implies [\underline{b},\overline{b}]\preceq [\underline{d},\overline{d}]\implies \underline{b}\leq \underline{d} ~\text{and}~\overline{b}\leq \overline{d}.
		\end{equation}
		From (\ref{g3}) and (\ref{g4}), we have
		\begin{eqnarray*}
		&&\underline{a}+\underline{b}\leq \underline{c}+\underline{d} ~\text{and}~ \overline{a}+\overline{b}\leq \overline{c}+\overline{d}\\
	 &\implies&[\underline{a}+\underline{b},\overline{a}+\overline{b}]\preceq[\underline{c}+\underline{d},\overline{c}+\overline{d}].
		\end{eqnarray*}
		Thus, $\textbf{A}\oplus \textbf{B}\preceq \textbf{C}\oplus \textbf{D}$.\\ \\
	\end{proof}
	\section{Proof of the Lemma \ref{g28}}\label{g35}
	\begin{proof}
		$\\$
	
		\emph{Proof of} (\ref{6}). Let $\textbf{A}=[\underline{a},\overline{a}],~\textbf{B}=[\underline{b},\overline{b}]$ and $\epsilon>0.$\\
		$\textbf{A}\ominus_{gH}\textbf{B}=[\underline{a}-\underline{b},\overline{a}-\overline{b}]$ or $[\overline{a}-\overline{b}, \underline{a}-\underline{b}].$ Let us now consider the following four possible cases.
		\begin{enumerate}[$\bullet$ \textbf{Case} 1.]
			\item\label{g108} $\textbf{A}\ominus_{gH}\textbf{B}=[\underline{a}-\underline{b},\overline{a}-\overline{b}]$ and $\lVert \textbf{A}\ominus_{gH} \textbf{B}\rVert_{I(\mathbb{R})}=\lvert\underline{a}-\underline{b}\rvert.$\\
			So, we have
			\begin{equation}\label{g36}
			\underline{a}-\underline{b}\leq \overline{a}-\overline{b}~\text{and}~\lvert\overline{a}-\overline{b}\rvert\leq\lvert\underline{a}-\underline{b}\rvert.
			\end{equation}
			Let $\lVert \textbf{A}\ominus_{gH} \textbf{B}\rVert_{I(\mathbb{R})}<\epsilon$. Then,
			\begin{equation}\label{g37}
			\lvert\underline{a}-\underline{b}\rvert<\epsilon.
			\end{equation}
			By equation (\ref{g37}), we have $-\epsilon<\underline{a}-\underline{b}<\epsilon$, and hence $\underline{b}-\epsilon<\underline{a}.$\\
		 By equations (\ref{g36}) and (\ref{g37}), we have $			\lvert\overline{a}-\overline{b}\rvert<\epsilon$. This implies $\overline{b}-\epsilon<\overline{a}.$
			Therefore, $\textbf{B}\ominus_{gH}[\epsilon,\epsilon]=[\underline{b}-\epsilon,\overline{b}-\epsilon]\prec[\underline{a},\overline{a}]=\textbf{A}.$\\
			Note that  by equation (\ref{g37}),  $\underline{a}<\underline{b}+\epsilon$. Also, by equations (\ref{g36}) and (\ref{g37}), we have $\lvert\overline{a}-\overline{b}\rvert<\epsilon$. This implies $\overline{a}<\overline{b}+\epsilon.$
			Therefore, $\textbf{A}=[\underline{a},\overline{a}]\prec [\underline{b}+\epsilon,\overline{b}+\epsilon]=\textbf{B}\oplus[\epsilon,\epsilon].$
			\item\label{g109} $\textbf{A}\ominus_{gH}\textbf{B}=[\underline{a}-\underline{b},\overline{a}-\overline{b}]$ and $\lVert \textbf{A}\ominus_{gH} \textbf{B}\rVert_{I(\mathbb{R})}=\lvert\overline{a}-\overline{b}\rvert.$\\
			So, we have
			\begin{equation}\label{g38}
			\underline{a}-\underline{b}\leq \overline{a}-\overline{b}~\text{and}~\lvert\underline{a}-\underline{b}\rvert\leq\lvert\overline{a}-\overline{b}\rvert.
			\end{equation}
			Consider
			\begin{eqnarray}\label{g39}
			&&\lVert \textbf{A}\ominus_{gH} \textbf{B}\rVert_{I(\mathbb{R})}<\epsilon\nonumber\\&\implies&\lvert\overline{a}-\overline{b}\rvert<\epsilon
			\end{eqnarray}
			By equation (\ref{g39}), we have
			\begin{equation*}
			\overline{b}-\epsilon<\overline{a}.
			\end{equation*}
			By equations (\ref{g38}) and \ref{g39}), we have  $\lvert\underline{a}-\underline{b}\rvert<\epsilon$. This implies $\underline{b}-\epsilon<\underline{a}.$
			Therefore, $\textbf{B}\ominus_{gH}[\epsilon,\epsilon]=[\underline{b}-\epsilon,\overline{b}-\epsilon]\prec[\underline{a},\overline{a}].$\\
			Note that  by equation (\ref{g39}), $\overline{a}<\overline{b}+\epsilon$. Also, by equations (\ref{g38}) and (\ref{g39}), we have $\lvert\underline{a}-\underline{b}\rvert<\epsilon$. This implies $\underline{a}<\underline{b}+\epsilon.$
			Therefore, $\textbf{A}=[\underline{a},\overline{a}]\prec[\underline{b}+\epsilon,\overline{b}+\epsilon]=\textbf{B}\oplus[\epsilon,\epsilon].$
			\item $\textbf{A}\ominus_{gH}\textbf{B}=[\overline{a}-\overline{b},\underline{a}-\underline{b}]$ and $\lVert\textbf{A}\ominus_{gH}\textbf{B}\rVert_{I(\mathbb{R})}=\lvert\underline{a}-\underline{b}\rvert.$\\
			This case can be proved by following the steps similar to \textbf{Case} \ref{g108}.
			\item $\textbf{A}\ominus_{gH}\textbf{B}=[\overline{a}-\overline{b},\underline{a}-\underline{b}]$ and $\lVert\textbf{A}\ominus_{gH}\textbf{B}\rVert_{I(\mathbb{R})}=\lvert\overline{a}-\overline{b}\rvert.$\\
			This case can be proved by following the steps similar to \textbf{Case} \ref{g109}.\\
		\end{enumerate}
			Conversely, let $\textbf{B}\ominus_{gH}[\epsilon,\epsilon]\prec \textbf{A}\prec \textbf{B}\oplus[\epsilon,\epsilon].$\\
			Note that
			\begin{eqnarray}\label{g40}
			 \textbf{B}\ominus_{gH}[\epsilon,\epsilon]\prec\textbf{A}&\implies&[\underline{b}-\epsilon,\overline{b}-\epsilon]\prec[\underline{a},\overline{a}]\nonumber\\&\implies&\underline{b}-\epsilon<\underline{a}~\text{and}~\overline{b}-\epsilon<\overline{a}.
			\end{eqnarray}
			Also,
			\begin{eqnarray}\label{g41}
			 \textbf{A}\prec\textbf{B}\oplus[\epsilon,\epsilon]&\implies&[\underline{a},\overline{a}]\prec[\underline{b}+\epsilon,\overline{b}+\epsilon]\nonumber\\&\implies&\underline{a}<\underline{b}+\epsilon~\text{and}~\overline{a}<\overline{b}+\epsilon.
			\end{eqnarray}
			From equations (\ref{g40}) and (\ref{g41}), we have
			\begin{eqnarray*}
				 &&\underline{b}-\epsilon<\underline{a}<\underline{b}+\epsilon~\text{and}~\overline{b}-\epsilon<\overline{a}<\overline{b}+\epsilon\\&\implies&\lvert\underline{a}-\underline{b}\rvert<\epsilon~\text{and}~\lvert\overline{a}-\overline{b}\rvert<\epsilon\\&\implies&\max\{\lvert\underline{a}-\underline{b}\rvert,\lvert\overline{a}-\overline{b}\rvert\}<\epsilon\\&\text{ i.e.,}&\lVert\textbf{A}\ominus_{gH}\textbf{B}\rVert_{I(\mathbb{R})}<\epsilon.
			\end{eqnarray*}
			This completes the proof of (\ref{6}).\\ \\
		\noindent\emph{Proof of} (\ref{8}). Let $\textbf{A}=[\underline{a},\overline{a}],~\textbf{B}=[\underline{b},\overline{b}]~\text{and}~\epsilon>0.$\\
		Consider $\textbf{A}\ominus_{gH}[\epsilon,\epsilon]\nprec\textbf{B}$. This implies $[\underline{a}-\epsilon,\overline{a}-\epsilon]\nprec[\underline{b},\overline{b}].$ Thus, $`\underline{b}\leq\underline{a}-\epsilon~\text{ and}~\overline{b}\leq\overline{a}-\epsilon$' or $`\underline{b}<\underline{a}-\epsilon~\text{and}~\overline{b}>\overline{a}-\epsilon$' or $`\underline{b}>\underline{a}-\epsilon~\text{and}~\overline{b}<\overline{a}-\epsilon$'.
		Let us consider all these three possibilities in the following three cases.
		\begin{enumerate}[$\bullet$ \textbf{Case} 1.]
			\item $\underline{b}\leq\underline{a}-\epsilon~\text{and}~\overline{b}\leq\overline{a}-\epsilon$.\\
			So, we have
			\begin{eqnarray*}
			 &&\underline{a}>\underline{b}~\text{and}~\overline{a}>\overline{b},~\text{because}~\epsilon>0\\&\implies&\textbf{B}\prec\textbf{A}\implies\textbf{A}\npreceq\textbf{B}.
			\end{eqnarray*}
			\item $\underline{b}<\underline{a}-\epsilon~\text{and}~\overline{b}>\overline{a}-\epsilon$.\\
			Since $\underline{b}<\underline{a}-\epsilon$, so $\underline{a}>\underline{b}$, and thus $\textbf{A}\npreceq\textbf{B}.$
			\item $\underline{b}>\underline{a}-\epsilon~\text{and}~\overline{b}<\overline{a}-\epsilon.$\\
			Since $\overline{b}<\overline{a}-\epsilon$, so $\overline{a}>\overline{b},~\text{and thus}~\textbf{A}\npreceq\textbf{B}.$
		\end{enumerate}
			Hence, proof of (\ref{8}) is complete.
	\end{proof}
	\noindent
	\\ \\
	
	\noindent\textbf{Acknowledgement}\\ \\
	The first author is grateful to the Department of Science and Technology, India, for the award of `inspire fellowship' (DST/INSPIRE Fellowship/2017/IF170248).  Authors extend sincere thanks to Prof. Jos{\'e} Luis Verdegay, Universidad de Granada, for his valuable comments to improve quality of the paper.
	\bibliographystyle{elsarticle-harv}
	\bibliography{mybib}
	
\end{document}